\documentclass[leqno, 10pt]{amsart}
\usepackage{a4wide}
\usepackage{amsmath, amssymb, latexsym, mathrsfs, mathtools, mathabx, shuffle}

%theorems and such
 \newtheorem{theorem}{Theorem}[section]
 \newtheorem{lemma}[theorem]{Lemma}
 \newtheorem{proposition}[theorem]{Proposition}
 \newtheorem{corollary}[theorem]{Corollary}

 \newtheorem*{theorem*}{Theorem}
\newtheorem*{proposition*}{Proposition}
\newtheorem*{lemma*}{Lemma}

\theoremstyle{definition}
\newtheorem{definition}[theorem]{Definition}

 \theoremstyle{remark}
 
 \newtheorem{remark}[theorem]{Remark}

  \newtheorem*{acknowledgements}{Acknowledgements}

\newcommand{\C}{\mathbb{C}}
\newcommand{\N}{\mathbb{N}}
\newcommand{\Q}{\mathbb{Q}}
\newcommand{\Z}{\mathbb{Z}}

\newcommand{\bC}{\mathbf{C}}

\newcommand{\cA}{\mathcal{A}}
\newcommand{\cO}{\mathcal{O}}

\newcommand{\sC}{\mathscr{C}} 
\newcommand{\sM}{\mathscr{M}}

\newcommand{\fS}{\ensuremath{\mathfrak{S}}}

\newcommand{\bt}{\bullet}

\newcommand{\op}[1]{\operatorname{#1}}

\newcommand{\Tot}{\op{Tot}}
\newcommand{\Diag}{\op{Diag}}
\newcommand{\AW}{\op{AW}}

\newcommand{\Mod}{\op{Mod}}

\newcommand{\HC}{\op{HC}}
\newcommand{\bHC}{\op{\mathbf{HC}}}
\newcommand{\HP}{\op{HP}}
\newcommand{\bHP}{\op{\mathbf{HP}}}

\newcommand{\acou}[2]{\ensuremath{\langle #1 , #2 \rangle}} 
\newcommand{\brak}[1]{\ensuremath{\langle #1\rangle}}

\newcommand{\Ch}{\op{Ch}}
\newcommand{\CS}{\op{CS}}

\newcommand{\ev}{{\textup{ev}}}
\newcommand{\odd}{{\textup{odd}}}

\newcommand{\wb}{\widebar{b}}
\newcommand{\wB}{\widebar{B}}
\newcommand{\wC}{\widebar{C}}
\newcommand{\wbd}{\widebar{d}}
\newcommand{\ws}{\widebar{s}}
\newcommand{\wt}{\widebar{t}}
\newcommand{\wT}{\widebar{T}}

\newcommand{\wpi}{\widebar{\pi}}
\newcommand{\wsigma}{{\widebar{\sigma}}} 
\newcommand{\wepsn}{\widebar{\varepsilon}^\natural}

\newcommand{\OG}{{\overline{\Gamma}}}

\newcommand{\ran}{\op{ran}}

\renewcommand{\frown}{\smallfrown}

\numberwithin{equation}{section}

\begin{document}
\title{Cyclic Homology and Group Actions}

 \author{Rapha\"el Ponge}
 \address{Department of Mathematical Sciences, Seoul National University, Seoul, South Korea}
 \email{ponge.snu@gmail.com}

\dedicatory{Dedicated to Alain Connes on the occasion of his 70th birthday}

 \thanks{Research partially supported by Basic Research grants 2013R1A1A2008802 and 2016R1D1A1B01015971 of National Research Foundation (South Korea). }
 
 \keywords{cyclic homology, group homology, equivariant cohomology}

\subjclass[2010]{19D55, 20J06, 55N91}

\begin{abstract}
In this paper we present the construction of explicit quasi-isomorphisms that compute the cyclic homology and periodic cyclic homology of crossed-product algebras associated with (discrete) group actions.  In the first part we deal with algebraic crossed-products associated with group actions on unital algebras over any ring $k\supset \Q$.   In the second part, we extend the results to actions on locally convex algebras. We then deal with crossed-products associated with group actions on manifolds and smooth varieties. For the finite order components, the results are expressed in terms of what we call ``mixed equivariant cohomology". This ``mixed" theory mediates between group homology and de Rham cohomology. It is naturally related to equivariant cohomology, and so we obtain explicit constructions of cyclic cycles out of equivariant characteristic classes. For the infinite order components, we simplify and correct the misidentification of~\cite{Cr:KT99}. An important new homological tool is the notion of ``triangular $S$-module". This is a natural generalization of the cylindrical complexes of Getzler-Jones. It combines the mixed complexes of Burghelea-Kassel and parachain complexes of Getzler-Jones with the $S$-modules of Kassel-Jones. There are spectral sequences naturally associated  with triangular $S$-modules. In particular, this allows us to recover spectral sequences of Feigin-Tsygan and Getzler-Jones and leads us to a new spectral sequence. 
\end{abstract}

\maketitle

\section*{Introduction}\label{sec:Intro}
Cyclic homology was introduced by Connes~\cite{Co:Ober81,Co:CRAS83,Co:NCDG}  as the relevant noncommutative analogue of de Rham theory (see also  Tsygan~\cite{Ts:UMN83}). In particular, Connes~\cite{Co:NCDG} established that the cyclic homology of the algebra of smooth functions on a manifold is precisely given by its de Rham cohomology.  Moreover, the dual theory, cyclic cohomology, is the natural receptacle for the Chern character in $K$-homology (see~\cite{Co:NCDG}). 
In addition, Loday-Quillen~\cite{LQ:CMH84} computed the cyclic homology of the algebra of regular functions on smooth varieties, and Burghelea~\cite{Bu:CMH85} computed the cyclic homology of group rings. 

In the setting of noncommutative geometry, crossed-product algebras play the role of the algebra of functions on the noncommutative spaces associated with group actions. 
There is a great amount of work on the cyclic homology of crossed-product algebras. In particular, their cyclic homology is known in the case of actions of finite groups (see, e.g., \cite{BC:CCDG,BDN:AIM17,FT:LNM87,GJ:Crelle93}). For general group actions, Feigin-Tsygan~\cite{FT:LNM87} constructed a spectral sequence converging to cyclic homology. In addition, Nistor~\cite{Ni:InvM90} described the periodicity operator of the infinite order components in terms of a module structure over group cohomology. In the setting of group actions on manifolds, the case of discrete proper actions was settled by Baum-Connes~\cite{BC:CCDG}. For general group actions, Connes~\cite{Co:Kyoto83,Co:NCG} computed the periodic cyclic homology of the homogenous component in terms of equivariant homology via the construction of an explicit cochain map.  The cyclic homology of the other finite order components was computed by Brylinski-Nistor~\cite{BN:KT94}, once again in terms of equivariant homology. In addition, Crainic~\cite{Cr:KT99} attempted to deal with the infinite order components, but he misidentified the homologies he obtained (see Remark~\ref{rmk:mfld-infinite.Crainic} on this point). More recently, Brodzki-Dave-Nistor~\cite{BDN:AIM17} dealt with actions of finite groups on varieties. For various other related results see~\cite{AK:Crelle03, BG:AENS94, BGJ:Crelle95,  Br:AIF87, Br:Preprint87, CM:CMP98, Da:JNCG13, ENN:ActaM88,  Go:CMP99, Mo:AIM15, MR:AIM11, Ni:InvM93, Ne:JFA88, NPPT:Crelle06, PPTT:CCM11}.  

With the exception of Connes' cochain map, the above-mentioned results do not produce explicit quasi-isomorphisms.  The aim of this paper is to present the computation of the cyclic homology of crossed-product algebras via the construction of explicit quasi-isomorphisms, including for the infinite order components. This provides us with a systematic way to construct cyclic cycles. Furthermore, the arguments use only elementary homological algebra, and so they bypass the difficult homological arguments involved in  previous approaches to the cyclic homology of crossed-product algebras. For group actions on manifolds and smooth varieties we obtain explicit relationships with equivariant cohomology. In particular, we simplify and correct the misidentification of the cyclic homology of the infinite order components in~\cite{Cr:KT99}. 

 In the first part of the paper (Sections~\ref{sec:background}--\ref{sec:infinite-order}), we focus on algebraic crossed-products $\cA_\Gamma=\cA\rtimes \Gamma$ associated with the action of an arbitrary group $\Gamma$ on a unital algebra $\cA$ over a commutative ring $k \supset \Q$. The first step is carried out in Section~\ref{sec:conj-classes}. We use the decomposition of the cyclic module $C(\cA_\Gamma)$ as a direct sum of cyclic submodules $C(\cA_\Gamma)_{[\phi]}$ parametrized by the conjugacy classes $[\phi]$ of elements $\phi \in \Gamma$. There are explicit quasi-isomorphisms between $C(\cA_\Gamma)_{[\phi]}$ and a 
mixed complex $\Tot(C^\phi(\Gamma_\phi, \cA))$ associated with the centralizer $\Gamma_\phi$ of $\phi$. This mixed complex is the total mixed complex of a cylindrical module $C^\phi(\Gamma_\phi,\cA)$ that we construct in Section~\ref{sec:CphiGA}. We are thus reduced to studying this cylindrical module. 

The cylindrical module $C^\phi(\Gamma_\phi, \cA)$ is similar to the cylindrical module of Getzler-Jones~\cite{GJ:Crelle93} (see Remark~\ref{rmk:CphiGA-Getzler-Jones} for the comparison between the two constructions). We can think of $C^\phi(\Gamma_\phi, \cA)$ as the tensor product over $\Gamma_\phi$ of twisted cyclic modules $C^\phi(\Gamma)$ and 
$C^\phi(\cA)$ associated with the action of $\phi$. Functoriality arguments further reduce the study of $C^\phi(\Gamma_\phi, \cA)$ to the separate studies of these twisted cyclic modules. Therefore, the bulk of the second step is the construction of explicit quasi-isomorphisms for $C^\phi(\Gamma_\phi)$ and the understanding of how they can be combined with the twisted cyclic structure $C^\phi(\cA)$ to give rise quasi-isomorphisms for $\Tot(C^\phi(\Gamma_\phi, \cA))$. 

The twisted cyclic modules $C^\phi(\Gamma)$ appeared in the computation of the cyclic homology of group rings by Burghelea~\cite{Bu:CMH85}.  Therefore, we may elaborate on the algebraic approach of Marciniak~\cite{Ma:BCP86} and the perturbation theory of Kassel~\cite{Ka:Crelle90} to construct explicit homotopy equivalences for $C^\phi(\Gamma)$. When $\phi$ has finite order this enables us to get explicit quasi-isomorphisms for the cyclic and periodic complexes of $C(\cA)_{[\phi]}$ (Theorem~\ref{thm:finite-order.HCAGphi}). As a corollary we obtain three spectral sequences converging to the cyclic homology of $C(\cA)_{[\phi]}$ (see Corollary~\ref{cor:finite-order.HCAGphi-spectral-sequences}). One of these spectral sequences refines and simplifies the spectral sequence of Getzler-Jones~\cite{GJ:Crelle93}. Another spectral sequence allows us to recover the spectral sequence of Feigin-Tsygan~\cite{FT:LNM87}. When $\Gamma_\phi$ is finite we actually get quasi-isomorphisms with $\Gamma_\phi$-invariant cyclic and period cyclic complexes (see Theorem~\ref{thm:finite.quasi-isomorphism-CAGphi}). 

When $\phi$ has infinite order, we have a homotopy equivalence between the cyclic complex of $C^\phi(\Gamma_\phi)$ and the group homology chain complex $C(\overline{\Gamma}_\phi)$, where $\overline{\Gamma}_\phi= \Gamma_\phi/ \brak\phi$ is the normalizer of $\phi$.  In addition, the periodicity operator is given by the cap product with the Euler class $e_\phi \in H^2(\OG_\phi,k)$ of the Abelian extension $1\rightarrow \brak\phi \rightarrow \Gamma_\phi \rightarrow    \overline{\Gamma}_\phi\rightarrow 1$. Unlike in the finite order case, the homotopy equivalence is not an equivalence of cyclic complexes. Therefore, in order to get quasi-isomorphisms for the cyclic complex of $\Tot(C^\phi(\Gamma_\phi, \cA))$ we need to go beyond the scope of cylindrical modules. The natural setup is the setup of $S$-modules in the sense of Jones-Kassel~\cite{JK:KT89}, or more generally para-$S$-modules. In Section~\ref{sec:triangular-S-module} we introduce a ``cylindrical version" of $S$-modules, which we coin triangular $S$-modules. Examples of triangular $S$-modules are given by tensor products of some $S$-modules and mixed complexes. Moreover, any cylindrical complex $C$ gives rise to two triangular $S$-modules whose total $S$-modules give the cyclic complex of $\Tot(C)$ (see Section~\ref{sec:triangular-S-module}). In addition, triangular $S$-module lead us to elementary derivations of \emph{all} the spectral sequences considered in this note.   
 
We also introduce the notion of a good infinite order action (see Definition~\ref{eq:infinite.good-action}). We then obtain explicit quasi-isomorphisms in the case of good infinite order actions; under these  quasi-isomorphisms the periodicity operator is given by some  cap product with the Euler class $e_\phi$ (see Theorem~\ref{thm:infinite.HCAGphi}). For general infinite order elements, we also obtain a simple derivation of a spectral sequence which specializes to the spectral sequence of Feigin-Tsygan (Theorem~\ref{thm:infinite.HCAGphi-FT-sequence-general}). 

Using difficult homological algebra arguments, Nistor~\cite{Ni:InvM90} showed that, for general infinite order action, $\HC_\bt(\cA)_{[\phi]}$ is a module over the group cohomology of the normalizer $\OG_\phi$ and the periodicity operator is given by the action of the Euler class. We recover Nistor's results via a simple and general coproduct construction for paracyclic modules (see Theorem~\ref{thm:infinite.action-HC}). An improvement with respect to~\cite{Ni:InvM90} is the fact that the action is defined explicitly at the level of chains. Moreover, the relationship with the usual cap product becomes more transparent. 

In the second part of the paper (Sections~\ref{sec:LCA}--\ref{sec:varieties}), we deal with group actions on manifolds and varieties. In order to deal with group actions on manifolds we need to extend the results of first part to actions on locally convex algebras, where we replace the usual cyclic space $C(\cA_\Gamma)$ with its projective closure $\bC(\cA_\Gamma)$.  There is no major difficulty in carrying this out. In the same way as in the first part, the cyclic module of $\bC(\cA_\Gamma)$ splits into a direct sum of cyclic subspaces $\bC(\cA_\Gamma)_{[\phi]}$ parametrized by the conjugacy classes $[\phi]$ of elements $\phi \in \Gamma$. 
By a density argument, the results in the algebraic case can be extended to give a quasi-isomorphism between the cyclic complexes of $\bC(\cA_\Gamma)_{[\phi]}$ and a   mixed complex $\Tot(\bC^\phi(\Gamma_\phi, \cA))$ associated with the stabilizer $\Gamma_\phi$ of $\phi$. That mixed complex is the total mixed complex of a cylindrical space $\bC^\phi(\Gamma_\phi, \cA)$ obtained as the tensor product (over $\Gamma_\phi$) of the twisted  cyclic spaces $C^\phi(\Gamma_\phi)$ and $\bC^\phi(\cA)$ (see Section~\ref{sec:LCA}). Thereon the results of the first part apply \emph{verbatim} to the cylindrical space $\bC^\phi(\Gamma_\phi, \cA)$, and so they provide us with explicit quasi-isomorphisms for the cyclic complex of $\bC(\cA)$ (see Section~\ref{sec:LCA} for  the precise statements). 

In the context of an action of a general group $\Gamma$ on a manifold $M$, the relevant homology (resp., cohomology) theory is the equivariant homology (resp., cohomology) which is the  homology (resp., cohomology) of the homotopy quotient $E\Gamma \times_\Gamma M$ (a.k.a.~Borel construction). In particular, equivariant cohomology is the natural receptacle for the construction of equivariant characteristic classes. The equivariant homology/cohomology can also be defined simplicially in terms of the bicomplex of Bott~\cite{Bo:LNM78}. For our purpose, it is convenient to construct a ``mixed complex" version of equivariant homology. Regarding the group homology chain complex $C(\Gamma)$ and the de Rham complex of differential forms $\Omega^\bt(M)$ as mixed complexes, we may form their tensor product as a mixed bicomplex $C(\Gamma, M)$. The cyclic homology of the total mixed complex $\Tot(C(\Gamma,M))$ gives the mixed equivariant homology (see Section~\ref{sec:equivariant}). 

The results of the first part, and their versions for locally convex algebras in Section~\ref{sec:LCA}, are functorial in nature. In particular, any quasi-isomorphism for $\bC^\phi(\cA)$ can be input into this framework to give a quasi-isomorphism for $\bC(\cA_\Gamma)_{[\phi]}$. In the case of an action of $\Gamma$ on a manifold $M$, i.e., for $\cA=C^\infty(M)$, there is a twisted version of Connes-Hochschild-Kostant-Rosenberg theorem due to Brylinski~\cite{Br:AIF87} and Brylinski-Nistor~\cite{BN:KT94}. We then have an explicit quasi-isomorphism between $\bC^\phi(\cA)$ and the de Rham complex of the fixed-point set $M^\phi$. The cleanness condition ensures us that $M^\phi$ has a stratification by a disjoint union of submanifolds. 

The action is always clean when $\phi$ has finite order, including when $\Gamma_\phi$ is finite. When $\Gamma_\phi$ is finite we get an explicit quasi-isomorphism with the $\Gamma_\phi$-invariant de Rham complex of $M^\phi$ (Theorem~\ref{thm:LCA.bHCAGphi-finite}). As a consequence, when the action of $\Gamma$ is discrete and proper we recover the description of cyclic homology of $\cA_\Gamma$ of Baum-Connes~\cite{BC:CCDG} in terms of orbifold cohomology (\emph{cf}.~Corollary~\ref{cor:finite-Baum-Connes}). 

More generally, when $\phi$ has finite order we obtain explicit quasi-isomorphisms
with the mixed equivariant homology of $M^\phi$ (Theorem~\ref{thm:finite-order}). This refines previous results of Brylinski-Nistor~\cite{BN:KT94}. As a consequence we obtain an explicit construction of cyclic cycles over $\cA_\Gamma$ in terms of cap products of equivariant characteristic classes and group homology classes (see Corollary~\ref{cor:finite-order.cap-HG-HP}). This connects nicely with the work of  Connes-Moscovici~\cite{CM:CMP98}, Moscovici~\cite{Mo:AIM15} and Moscovici-Rangipour~\cite{MR:AIM11} on the construction of cyclic cocycles out of transverse characteristic classes through the channel of Hopf cyclic cohomology. 

We also calculate the pairing of the resulting cycles with cocycles arising from equivariant currents (Proposition~\ref{prop:manifolds.pairing-equivarian-currents}). Examples of such cocycles include the transverse fundamental class of Connes~\cite{Co:Kyoto83} and the CM cocycle of an equivariant Dirac spectral triple~(see \cite{PW:JNCG16}).
 In the infinite order case, we obtain explicit quasi-isomorphisms when the action is clean  (Theorem~\ref{thm:manifolds.infinite-order-clean}). This involves a version of equivariant cohomology associated with the normalizer $\OG_\phi =\Gamma_\phi/ \brak\phi$ and the de Rham complex of $M^\phi$ where the de Rham differential is combined with the Euler class of the extension of $\OG_\phi$ by $\Gamma_\phi$ (see Section~\ref{sec:manifolds}).  This fixes the misidentification of~\cite{Cr:KT99}. 
 
 The results for group actions on manifolds have complete analogues for group actions on smooth varieties thanks to the twisted version of Hochschild-Kostant-Rosenberg theorem in~\cite{BDN:AIM17}. When $\Gamma_\phi$ is finite we obtain an explicit quasi-isomorphism with the $\Gamma_\phi$-invariant de Rham complex of $X^\phi$ (Theorem~\ref{thm:varieties.finite}). In particular, we recover a recent result of Brodzki-Dave-Nistor~\cite{BDN:AIM17}. More generally, when $\phi$ has finite order we get an explicit quasi-isomorphism with a mixed bicomplex associated with the de Rham complex of $X^\phi$ (Theorem~\ref{thm:varieties.finite-order}). Finally, in the infinite order case, we also have an explicit quasi-isomorphism provided the action is clean (see Theorem~\ref{thm:varieties.infinite-order}). 

This paper is organized as follows. In Section~\ref{sec:background} we survey the main background on cyclic homology needed for this note. In Section~\ref{sec:triangular-S-module}, we introduce and study triangular $S$-modules. In Section~\ref{sec:CphiGA}, we construct the cylindrical complexes $C^\phi(\Gamma, \sC)$ whose roles are pivotal in this paper. In Section~\ref{sec:conj-classes}, we show that the computation of the cyclic homology of $\cA_\Gamma$ reduces to the study of those cylindrical complexes. In Section~\ref{sec:finite}, we look at the case where $\Gamma_\phi$ is finite. The finite order case is dealt with in Section~\ref{sec:finite-order}. In Section~\ref{sec:infinite-order}, we deal with the infinite order case. 
In Section~\ref{sec:LCA}, we explain how to extend the previous results to actions on locally convex algebras. In Section~\ref{sec:equivariant}, after recalling the main facts about equivariant cohomology, we introduce mixed equivariant homology and explain its relationship with equivariant cohomology. In Section~\ref{sec:manifolds} we apply the results of Section~\ref{sec:LCA} to crossed-product algebras associated with group actions on manifolds. 
In Section~\ref{sec:varieties} we obtain analogues of these results for group actions on smooth varieties. 
 
 The results of this paper were also announced in~\cite{Po:CRAS4, Po:CRAS5}. 

\begin{acknowledgements}
I wish to thank Paul Baum, Alain Connes, Sasha Gorokhovsky, Masoud Khalkhali, Henri Moscovici, Victor Nistor, Markus Pflaum, Hessel Posthuma, Bahram Rangipour, Xiang Tang, and Hang Wang for  various stimulating discussions related to the subject matter of this paper. I also would like to thank the hospitality of IH\'ES, McGill University, and UC Berkeley where part of the research for this paper was carried out.
\end{acknowledgements}

%\part{Algebraic Crossed-Products} 

\section{Background on Cyclic Homology}\label{sec:background}
In this section, we recall the main background and notation regarding cyclic homology. We refer to~\cite{Co:NCG,Lo:CH} and the references quoted below for more details. 
Throughout this section and the next two sections we let $k$ be an arbitrary unital ring. By a $k$-module we will always mean a left $k$-module. 

\subsection{Mixed complexes, cyclic modules, and $S$-maps}
A \emph{mixed complex} of $k$-modules is given by the datum of $(C_\bt,b,B)$, where $C_m$, $m\geq 0$, are $k$-modules and $b:C_\bt \rightarrow C_{\bt-1}$ and $B:C_\bt\rightarrow C_{\bt+1}$ are $k$-module maps such that $b^2=B^2=bB+Bb=0$ (\emph{cf}.~\cite{Bu:CM86,Ka:JAlg87}). The \emph{cyclic complex} of a mixed complex $C=(C_\bt,b,B)$ is the chain complex $C^\natural=(C^\natural_\bt, b+BS)$, where $C^\natural_m=C_m\oplus C_{m-2}\oplus \cdots$, $m\geq 0$, and $S:C_\bt^\natural \rightarrow C_{\bt-2}^\natural$ is the canonical projection obtained by factorizing out $C_m$. The operator $S$ is called the \emph{periodicity operator}. The \emph{periodic cyclic homology} is the $\Z_2$-graded chain complex $C^\sharp=(C^\sharp, b+B)$, where $C_i^\sharp = \prod_{q \geq 0}C_{2q+i}$, $i=0,1$. Equivalently, we have $C_\bt^\sharp=\varprojlim_S C_{2q+\bt}$, where $\varprojlim_S$ is the inverse limit of the directed system defined by the operators 
$S:C_{2q+\bt}\rightarrow C_{2q+\bt-2}$, $q\geq 1$. The homology of the chain complex $C^\natural$ (resp., $C^\sharp$) is called the \emph{cyclic homology} (resp., \emph{periodic cyclic homology}) of the mixed complex $C$. It is denoted by $\HC_\bt(C)$ (resp., $\HP_\bt(C)$). The homology of the chain complex $(C_\bt, b)$ is called the \emph{ordinary homology} of $C$ and is denoted by $H_\bt(C)$. 

A \emph{cyclic $k$-module} is given by the datum of $(C_\bt,d,s,t)$, where $C_m$, $m\geq 0$, are $k$-modules and the $k$-module maps $d:C_\bt\rightarrow C_{\bt-1}$, $s:C_\bt\rightarrow C_{\bt+1}$ and $t:C_\bt \rightarrow C_\bt$ define a simplicial module structure with faces $d_j=t^{j}dt^{-(j+1)}$ and degeneracies $s_j=t^{j+1}dt^{-(j+1)}$ and $t$ is further required to be cyclic, i.e., $t^{m+1}=1$ on $C_m$ (\emph{cf}.~\cite{Co:CRAS83,Co:NCG}). Any cyclic $k$-module $C=(C_\bt, d,s,t)$ gives rise to a mixed complex $(C_\bt, b,B)$, where $b=\sum_{j=0}^m(-1)^j d_j$ and $B=(1-\tau)sN$ on $C_m$ with $\tau =(-1)^m t$ and $N=1+\tau +\cdots + \tau^m$. With any unital algebra $\cA$ over $k$, Connes~\cite{Co:CRAS83,Co:NCDG,Co:NCG} associated a cyclic $k$-module $C(\cA)=(C_\bt(\cA), d,s,t)$, where $C_\bt(\cA)=\cA^{\otimes (m+1)}$, $m\geq 0$, and the structural operators 
$(d,s,t)$ are given by 
\begin{align}
 d(a^0\otimes \cdots \otimes a^m) &= (a^ma^0)\otimes a^1\otimes \cdots \otimes a^{m-1},\\
 s(a^0\otimes \cdots \otimes a^m) &= 1\otimes a^0\otimes \cdots \otimes a^m,
 \label{eq:cylic-algebra-s}\\ 
 t(a^0\otimes \cdots \otimes a^m) &= a^m\otimes a^0\otimes \cdots \otimes a^{m-1}, \qquad a^j \in \cA. 
\end{align}
The cyclic homology (resp., periodic cyclic homology) of $C(\cA)$ is simply called the \emph{ cyclic homology} (resp., \emph{ periodic cyclic homology}) of the algebra $\cA$. It is denoted by $\HC_\bt(\cA)$ (resp., $\HP_\bt(\cA)$) (\emph{cf}.~\cite{Co:Ober81,Co:NCDG,Co:NCG}).  

Let $C=(C_\bt, b,B)$ and $\wC=(\wC_\bt, b, B)$ be mixed complexes. Any mixed complex map $f:C_\bt \rightarrow \wC_\bt$ gives rise to chain maps  
$f:C_\bt^\natural \rightarrow \wC_\bt^\natural$ and  $f:C_\bt^\sharp \rightarrow \wC_\bt^\sharp$ between the corresponding cyclic and periodic complexes. We obtain quasi-isomorphisms whenever the chain map $f:(C_\bt, b)\rightarrow  (\wC_\bt, b)$ is a quasi-isomorphism. Moreover, according to~\cite{Ka:JAlg87} an \emph{$S$-map} $f:C^\natural_\bt \rightarrow \wC_\bt^\natural$ is a chain map which is compatible with the $S$-operators. It uniquely decomposes as $f=\sum f^{(j)} S^j$, where $f^{(j)}:C_\bt \rightarrow \wC_{\bt+2j}$ is a $k$-module of degree $2j$ such that $[b,f^{(0)}]=0$ and $[B,f^{(j)}]+[b,f^{(j+1)}]=0$. Any mixed complex map gives rise to an $S$-map. An $S$-map $f:C^\natural_\bt \rightarrow \wC_\bt^\natural$ is a quasi-isomorphism when its zeroth degree chain map $f^{(0)}:(C_\bt, b)\rightarrow (\wC,b)$ is a quasi-isomorphism. In addition, any $S$-map $f:C^\natural_\bt \rightarrow \wC_\bt^\natural$  extends to a chain map $f^\sharp: C^\sharp_\bt \rightarrow \wC_\bt^\sharp$. We obtain a quasi-isomorphism whenever $f$ is a quasi-isomorphism. 

\subsection{The Paracyclic and cylindrical categories}
In the context of group actions on algebras, we are naturally lead to go beyond the scope of mixed complexes and cyclic modules~(\cite{FT:LNM87,GJ:Crelle93}). According to~\cite{GJ:Crelle93}, a \emph{parachain complex} is given by the datum of $(C_\bt,b,B)$, where $C_m$, $m\geq 0$, are $k$-modules and $b:C_\bt \rightarrow C_{\bt-1}$ and $B:C_\bt\rightarrow C_{\bt+1}$ are $k$-module maps such that $b^2=B^2=0$ and $bB+Bb=1-T$, where $T:C_\bt\rightarrow C_\bt$ is some invertible $k$-module map. Given a parachain complex $C=(C_\bt,b+SB)$  we also can 
form a cyclic complex $C^\natural=(C^\natural_\bt, b,B)$ as in the case of mixed complexes. 
This need not be a chain complex (unless $C$ is a mixed complex), but we obtain a para-$S$-module (\emph{cf}.\ Section~\ref{sec:triangular-S-module}). 

Like a cyclic $k$-module, a \emph{paracyclic $k$-module} is given by the datum of $(C_\bt, d,s,t)$, where $C_m$, $m\geq 0$, are $k$-modules and the $k$-module maps $d:C_\bt\rightarrow C_{\bt-1}$, $s:C_\bt\rightarrow C_{\bt+1}$ and $t:C_\bt \rightarrow C_\bt$ define a simplicial module structure as above, but the cyclicity of the operator $t$ is replaced by the relation $t=ds$ and requiring $t$ to be invertible (\emph{cf}.~\cite{FT:LNM87,GJ:Crelle93}). Any paracyclic $k$-module gives rise to a parachain complex $(C_\bt, b,B)$, where $b$ is defined as above and $B=(1-\tau)s'N$ with $s'=sb's$ and $b'=\sum_{j=0}^{m-1} (-1)^j d_j=b-d$ on $C_m$. In that case $T=1-(bB+Bb)=t^{m+1}$ on $C_m$. When $C$ is a cyclic module we obtain a mixed complex which is isomorphic to the mixed complex defined above. 

A \emph{parachain bicomplex} is given by the datum of $(C_{\bt,\bt}, \wb, \wB, b,B)$, where $C_{p,q}$, $p,q\geq 0$, are $k$-modules, $(C_{\bt,q},\wb, \wB)$ and $(C_{p,\bt},b,B)$ are parachain complexes for all $p,q\geq 0$, and the horizontal differentials $(\wb, \wB)$ both commute with each of the vertical differentials $(b,B)$ (\emph{cf}.~\cite{GJ:Crelle93}). We obtain  a \emph{mixed bicomplex} when we further have $\wb\wB+\wB\wb=bB+Bb=0$. Following~\cite{GJ:Crelle93} we say that we have a \emph{cylindrical complex} when the operators $\wT=1-(\wb\wB+\wB\wb)$ and $T=1-bB+Bb$ are such that $\wT T=1$ (\emph{cf}.~\cite{GJ:Crelle93}). Any mixed bicomplex $C=(C_{\bt,\bt}, \wb,\wB, b,B)$ gives rise to a \emph{total mixed complex} $\Tot(C)=(\Tot_\bt(C),b^\dagger, B^\dagger)$, where $\Tot_m(C)=\oplus_{p+q=m} C_{p,q}$ and $b^\dagger=\wb +(-1)^pb$ and $B^\dagger = \wB +(-1)^pB$ on $C_{p,q}$. As observed in~\cite{GJ:Crelle93}, any cylindrical complex also gives rise to a total mixed complex $\Tot(C)=(\Tot_\bt(C),b^\dagger, B^\dagger)$, where $\Tot_\bt(C)$ and $b^\dagger$ are as above and $B^\dagger =\wB+(-1)^p \wT B$ on $C_{p,q}$.

A \emph{bi-paracyclic $k$-module} is given by the datum of $(C_{\bt,\bt}, \wbd, \ws, \wt, d,s,t)$, where $C_{p,q}$, $p,q\geq 0$, are $k$-modules, $(C_{\bt,q},\wbd, \ws, \wt)$ and $(C_{p,\bt},d,s,t)$ are paracyclic $k$-modules for all $p,q\geq 0$, and all the horizontal operators $(\wbd, \ws, \wt)$ commute with each of the vertical operators $(d,s,t)$ (\emph{cf}.~\cite{GJ:Crelle93}). We have a \emph{bi-cyclic $k$-module} when $\wt^{p+1}=t^{q+1}=1$ on $C_{p,q}$. We obtain \emph{cylindrical $k$-module} when $\wt^{p+1}t^{q+1}=1$ (\emph{cf}.~\cite{GJ:Crelle93}). Any bi-paracylic $k$-module $C=(C_{\bt,\bt}, \wbd, \ws, \wt, d,s,t)$ gives rise to a parachain bicomplex. We get a mixed bicomplex (resp., cylindrical complex) when $C$ is a bi-cyclic (resp., cylindrical) module. In addition, if $C=(C_{\bt,\bt}, \wbd, \ws, \wt, d,s,t)$ is a bi-paracyclic $k$-module, then its diagonal gives rise to a paracyclic $k$-module $\Diag(C)=(\Diag_\bt(C), \wbd d, \ws s, \wt t)$, where $\Diag_m(C)=C_{m,m}$, $m \geq 0$. We actually obtain a cyclic $k$-module when $C$ is a cylindrical module. 

\subsection{$S$-modules and perturbation lemma}
For the purpose of this paper, it is convenient to work in the general framework of  $S$-modules in the sense of Jones-Kassel~\cite{JK:KT89,Ka:Crelle90}. An \emph{$S$-module} is given by a chain complex of $k$-modules $(C_\bt, d)$ and a chain map $S:C_\bt \rightarrow C_{\bt-2}$ of degree $-2$,  which is called \emph{periodicity operator}. Given a mixed complex $C$, the associated cyclic complex $C^\natural$ is an $S$-module. When $C$ is a cyclic $k$-module, then its Connes' cyclic complex $(C^\lambda_\bt, b)$ and the total complex of its cyclic bicomplex (in the sense of Tsygan~\cite{Ts:UMN83} and Loday-Quillen~\cite{LQ:CMH84}) both give rise to $S$-modules. There is a natural notion of $S$-module maps. Given $S$-modules $C=(C_\bt, d,S)$ and  $\wC=(\wC_\bt, d,S)$, we shall say that two $S$-module maps $f:C_\bt\rightarrow \wC_\bt$ and $g:C_\bt \rightarrow \wC_\bt$ are \emph{$S$-homotopic} when there is an $S$-module map $h:C_\bt \rightarrow \wC_{\bt+1}$ such that $f-g=dh+hd$. This naturally leads us to notions of $S$-homotopy inverses of $S$-module maps and $S$-homotopy equivalences of $S$-modules. 

As with mixed complexes, for any $S$-module $C=(C_\bt, d,S)$, the $S$-operator gives rise to a periodic chain complex $C^\sharp=(C^\sharp_\bt, d)$, where $C_\bt =\varprojlim_S C_{2q+\bt}$. Any $S$-module map $f:C_\bt \rightarrow \wC_\bt$ between $S$-modules induce a chain map $C^\sharp_\bt \rightarrow  \wC^\sharp_\bt$. 
This chain map is a quasi-isomorphism (resp., homotopy equivalence), whenever $f$ is a quasi-isomorphism (resp., an $S$-homotopy equivalence). 

%According to~\cite{Po:para-S-modules}, 
A  \emph{para-$S$-module} is given by the datum of $(C_\bt, b, S)$, where $C_m$, $m\geq 0$, are $k$-modules and $d:C_\bt \rightarrow C_{\bt-1}$ and $S:C_\bt \rightarrow C_{\bt -2}$ are $k$-module maps commuting with each other such that $d^2=(1-T)S$, where $T:C_\bt\rightarrow C_\bt$ is a $k$-module map commuting with both $d$ and $S$. Given any parachain complex $C$, its cyclic pseudo-complex is an example of para-$S$-module. Notions of para-$S$-module maps and $S$-homotopies of para-$S$-module maps  make sense in the same way as in the context of $S$-modules. 
Therefore, although quasi-isomorphisms do not quite make sense for para-$S$-modules, $S$-homotopy inverse of a para-$S$-module map and $S$-homotopy equivalence of para-$S$-modules do make sense. In particular, we have a notion of $S$-homotopy inverse for any $S$-map of parachain complexes. 

Assume further that the category of left $k$-modules $\Mod(k)$ has a tensor product in the sense there is a bifunctor $\otimes_k: \Mod(k) \times \Mod(k) \rightarrow \Mod(k')$, where $k'$ is a sub-ring of $k$ containing $\Z$. When $k$ is commutative we shall take this tensor product to be the usual tensor product of $k$-modules, in which case $k'=k$. Given two parachain complexes of $k$-modules $C=(C_\bt,b,B)$ and $\sC=(\sC,b,B)$ we then can form their tensor product $C\otimes_k \sC$ as the parachain bicomplex of $k'$-modules $(C_\bt \otimes_k \sC_\bt, \wb, \wB, b,B)$, where we have denoted by $(\wb, \wB)$ (resp., $(b,B)$) the operators $(b\otimes 1, B\otimes 1)$ (resp., $(1\otimes b, 1\otimes B) $). If $C=(C_\bt, d,s,t)$ and $\sC=(\sC_\bt, d,s,t)$, then we similarly define their tensor product $C \otimes \sC$ as a bi-paracyclic $k'$-module 
$(C_\bt \otimes \sC_\bt, \wbd, \ws, \wt, d,s,t)$. 

The perturbation theory of Brown~\cite{Br:TwistedEZ} and Kassel~\cite{Ka:Crelle90} can be extended without much difficulties to the setting of para-$S$-modules. The details will appear in the forthcoming paper~\cite{Po:para-S-modules}. In what follows, we will only need the following result.

\begin{lemma}[\cite{Po:para-S-modules}]\label{lem:homotopy-lemma}
 Let $C=(C_\bt, b,B)$ and $\wC=(C_\bt, b,0)$ be parachain complexes of $k$-modules, where the $B$-differential of $\wC$ is zero. Let $f:C_\bt \rightarrow \wC_\bt$ be a parachain complex map, and assume that the induced chain map $f:(C_\bt, b)\rightarrow (\wC_\bt, b)$ has a homotopy inverse $g:\wC_\bt \rightarrow C_\bt$. Then
 \begin{enumerate}
\item The associated $S$-map $f:C^\natural_\bt \rightarrow \wC^\natural_\bt$ has an explicit $S$-homotopy inverse $S$-map $g^\natural:\wC^\natural \rightarrow C^\natural$ whose zeroth degree component is $g$.
 
\item For any parachain complex $\sC$, the $S$-maps $f\otimes 1: \Tot_\bt(C\otimes_k \sC)^\natural \rightarrow \Tot_\bt(\wC\otimes_k \sC)^\natural$ and 
$g\otimes 1: \Tot_\bt(\wC\otimes_k \sC)^\natural \rightarrow \Tot_\bt(C\otimes_k \sC)^\natural$ are $S$-homotopy inverses of each other. 
\end{enumerate}
 \end{lemma}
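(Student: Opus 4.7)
The plan is to deduce this from a para-$S$-module version of the basic perturbation lemma of homological perturbation theory. First I would fix chain-homotopy data for $b$: namely, the given chain map $g:(\wC_\bt,b)\to (C_\bt,b)$ inverse to $f$, and a homotopy $h:C_\bt\to C_{\bt+1}$ with $bh+hb=1-gf$. By a standard preliminary adjustment I may assume the side conditions $fg=1$, $fh=0$, $hg=0$, and $h^2=0$ (so that $(f,g,h)$ is a strong deformation retract). Since $f$ is a parachain map and the $B$-differential of $\wC$ vanishes, $fB=0$, so the associated $S$-map $f:C^\natural\to\wC^\natural$ has only a degree-zero component.

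For part~(1), I would define the candidate $S$-homotopy inverse and $S$-homotopy by
\[
g^\natural = \sum_{j\geq 0} (hB)^j\, g\cdot S^j, \qquad H = \sum_{j\geq 0} (hB)^j\, h\cdot S^j,
\]
both sums being finite in each cyclic degree. Three verifications are then needed: that $g^\natural$ commutes with $S$ and satisfies $(b+BS)g^\natural = g^\natural b$, so it is a genuine $S$-map; that $fg^\natural=1$, which reduces to $fg=1$ thanks to $fh=0$; and that $g^\natural f - 1 = (b+BS)H + H(b+BS)$ on $C^\natural$. Each is a direct expansion using the identities $bh+hb=1-gf$, $B^2=0$, the parachain relation $bB+Bb=1-T$, and the side conditions, with the ``$1-T$'' error terms telescoping into successive powers of $S$.

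For part~(2), tensoring with $\sC$ turns the strong deformation retract $(f,g,h)$ into one of the form $(f\otimes 1,\,g\otimes 1,\,h\otimes 1)$ between the totalized $b^\dagger$-chain complexes $(\Tot_\bt(C\otimes_k\sC),b^\dagger)$ and $(\Tot_\bt(\wC\otimes_k\sC),b^\dagger)$, all side conditions being preserved under tensoring. Applying part~(1) to these totalized parachain complexes produces an explicit $S$-homotopy inverse $(g\otimes 1)^\natural$ of $f\otimes 1$ whose zeroth-degree component is $g\otimes 1$, and a symmetric argument yields the $S$-homotopy inverse on the other side. Naturality of the perturbation formula in the homotopy data gives the claimed mutual $S$-homotopy equivalence.

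The main obstacle is controlling the ``$1-T$'' correction terms that arise because $C$ is only a parachain, not a mixed, complex. In the classical Kassel perturbation lemma for mixed complexes these errors vanish identically; here they must be shown to reassemble coherently into an $S$-power series. The hypothesis $\wB=0$ on $\wC$ (equivalently $\wT=1$) is what forces the transferred perturbation on the $\wC$-side to vanish and keeps the procedure closed, so that the series for $g^\natural$ and $H$ truncate in each cyclic degree. The detailed verification of this absorption is the content of~\cite{Po:para-S-modules}; for part~(2) the additional point is that the para-operator $T_\sC$ of $\sC$ enters only through the tensor factor where $\wT=1$, so the telescoping on the $\wC$-side is undisturbed.
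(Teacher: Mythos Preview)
The paper does not give its own proof of this lemma: it is quoted from the forthcoming~\cite{Po:para-S-modules}, and the surrounding text only says that it is obtained by extending the perturbation theory of Brown and Kassel to para-$S$-modules. Your proposal follows exactly that route, and the explicit formulas $g^\natural=\sum_j(hB)^jg\,S^j$ and $H=\sum_j(hB)^jh\,S^j$ are the expected ones. So at the level of strategy there is nothing to compare --- you are doing what the paper says is done.

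Two technical points in your sketch deserve care. First, your claim that one may ``by a standard preliminary adjustment'' arrange $fg=1$ is not correct for a general chain homotopy equivalence: only the side conditions on $h$ (namely $fh=0$, $hg=0$, $h^2=0$) can always be enforced, while $fg=1$ requires $f$ to be split surjective. This is harmless here, since $\wB=0$ forces the differential on $\wC^\natural$ to be just $b$, so the relation $fg^\natural\simeq_S 1$ follows by extending the given $b$-homotopy for $fg\simeq 1$ componentwise; but you should not claim $fg^\natural=1$ on the nose. Second, your reduction of part~(2) to part~(1) does not go through as stated: the target $\Tot(\wC\otimes_k\sC)$ has $B^\dagger=(-1)^p(1\otimes B_\sC)$, which is nonzero in general, so the hypothesis ``$B$-differential of the target is zero'' fails. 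One must either invoke the more general perturbation lemma alluded to in the remark following the lemma (where the transferred $\tilde B$ need not vanish), or run the perturbation series directly on the totalized complexes and check that the $1\otimes B_\sC$ contribution commutes with the entire construction. Your closing remark that ``$\wT=1$ on the $\wC$-side'' controls this is pointing in the right direction, but it is not a substitute for the actual verification.
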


\begin{remark}
 When  $\wC$ has a non-zero $B$-differential $\wB$, a more general version of Lemma~\ref{lem:homotopy-lemma} holds at the expense of replacing $\wB$ by an auxiliary differential $\tilde{B}$, which need agree with $\wB$ (see~\cite{Ka:Crelle90, Po:para-S-modules}).
\end{remark}

\subsection{The Bi-paracyclic Eilenberg-Zilber theorem}
Given a bisimplicial $k$-module $C$, the Eilenberg-Zilber theorem asserts that the shuffle map $\shuffle: \Tot_\bt(C) \rightarrow \Diag_\bt(C)$ and the Alexander-Whitney map $\AW: \Diag_\bt(C)\rightarrow \Tot_\bt(C)$ are quasi-inverse chain maps. The first generalization of the Eilenberg-Zilber theorem to bi-cyclic modules is due to Hood-Jones~\cite{HJ:KT87}. This was further extended to cylindrical modules by Getzler-Jones~\cite{GJ:Crelle93} (see also~\cite{KR:CMB04, ZW:CMB14}). More precisely, given any cylindrical $k$-module $C$, the cyclic complexes $\Tot(C)^\natural$ and $\Diag(C)^\natural$ are quasi-isomorphic. 

We actually need a version of the Eilenberg-Zilber theorem for bi-paracyclic modules. In this context $\Tot^\bt(C)^\natural$ and $\Diag(C)^\natural$ need not be chain complexes anymore. However, they are para-$S$-modules, and so it makes sense to speak about an $S$-homotopy equivalence between them. Bauval~\cite{Ba:Preprint98} established such an equivalence when $C$ is a bi-cyclic module. More generally, by extending the approach of~\cite{Ba:Preprint98} to the bi-paracyclic setting we obtain the following result.

\begin{proposition}[\cite{Po:para-S-modules}]\label{prop:bi-paracyclic-EZ}
 Let $C$ be a bi-paracyclic $k$-module. Then there are explicit $S$-maps $\shuffle^\natural:  \Tot_\bt(C)^\natural \rightarrow \Diag_\bt(C)^\natural$ and 
 $\AW^\natural: \Diag_\bt(C)^\natural  \rightarrow  \Tot_\bt(C)^\natural$ which are $S$-homotopic inverses whose respective zeroth degree components are the 
 shuffle and Alexander-Whitney maps.  
\end{proposition}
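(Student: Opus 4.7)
The plan is to adapt Bauval's proof~\cite{Ba:Preprint98} of the bi-cyclic Eilenberg--Zilber theorem to the bi-paracyclic setting, using para-$S$-module perturbation theory in the spirit of Lemma~\ref{lem:homotopy-lemma} and Kassel~\cite{Ka:Crelle90}.

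The starting point is the classical bisimplicial Eilenberg--Zilber theorem, which supplies explicit, mutually chain-homotopy inverse chain maps $\shuffle:\Tot_\bt(C)\rightarrow \Diag_\bt(C)$ and $\AW:\Diag_\bt(C)\rightarrow \Tot_\bt(C)$ with respect to the simplicial differentials, together with an explicit chain homotopy $H$ satisfying $\AW\circ\shuffle=\textup{id}$ and $\shuffle\circ \AW-\textup{id}=[b,H]$. Because these formulas only involve the simplicial operators $(d,s)$, they carry over verbatim to the bi-paracyclic setting and serve as the zeroth-degree components of the desired $S$-maps.

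To promote $\shuffle$ and $\AW$ to para-$S$-module maps between $\Tot_\bt(C)^\natural$ and $\Diag_\bt(C)^\natural$, I would construct higher-degree components $\shuffle^{(j)}:\Tot_\bt(C)\rightarrow \Diag_{\bt+2j}(C)$ and $\AW^{(j)}:\Diag_\bt(C)\rightarrow \Tot_{\bt+2j}(C)$ recursively so that
\[
 [b^\dagger,\shuffle^{(0)}]=0, \qquad [b^\dagger,\shuffle^{(j+1)}]+[B^\dagger,\shuffle^{(j)}]=0 \quad (j\geq 0),
\]
and analogously for $\AW$. With $\shuffle^{(0)}=\shuffle$, setting $\shuffle^{(j+1)}:=-H\,[B^\dagger,\shuffle^{(j)}]$ (up to correction terms built from the paracyclic operators $\wT$ and $T$) produces the required identities by induction; consistency of the recursion follows from the parachain relations and from $\AW\circ\shuffle=\textup{id}$, exactly as in the proof of Lemma~\ref{lem:homotopy-lemma}. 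An analogous construction furnishes $\AW^\natural$.

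Finally, to see that $\shuffle^\natural$ and $\AW^\natural$ are $S$-homotopy inverses, I would observe that $\AW^\natural\circ\shuffle^\natural$ has zeroth component $\AW\circ\shuffle=\textup{id}$, so the same perturbation argument with $H$ playing the role of the contracting homotopy produces an $S$-homotopy to the identity. The composition $\shuffle^\natural\circ\AW^\natural$ is treated similarly, starting from $\shuffle\circ \AW-\textup{id}=[b,H]$. The main obstacle is that in the para-$S$-module setting one has $bB+Bb=1-T\ne 0$, so the contribution of the paracyclic operator $T$ must be tracked carefully at each step of the induction. This combinatorial bookkeeping is the heart of the generalization of Bauval's argument and is carried out in~\cite{Po:para-S-modules}.
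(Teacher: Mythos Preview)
Your proposal is correct and matches the paper's stated approach: the paper does not give a proof in-text but attributes the result to~\cite{Po:para-S-modules}, noting only that it is obtained ``by extending the approach of~\cite{Ba:Preprint98} to the bi-paracyclic setting.'' Your sketch---starting from the classical Eilenberg--Zilber data $(\shuffle,\AW,H)$, promoting them to $S$-maps via a Kassel-type perturbation recursion, and tracking the contribution of the paracyclic operator $T$---is precisely this extension of Bauval's argument.
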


\section{Triangular $S$-Modules and Spectral Sequences} \label{sec:triangular-S-module}
In this section, we introduce and study ``cylindrical versions" of para-$S$-modules.

\begin{definition}\label{def:horizontal-triangular-S-module}
 A \emph{(horizontal) triangular para-$S$-module} is given by the datum of $(C_{\bt,\bt},\wbd, b,B, S)$, where $C_{p,q}$, $p,q\geq 0$, are $k$-modules, 
 $(C_{\bt,q}, \wbd, S)$ is a para-$S$-module and $(C_{p,\bt}, b,B)$ is a parachain complex for all $p,q\geq 0$,  
 the horizontal operators $(d,S)$ both commute with each of the vertical differentials $(b,B)$. We say that such a datum is a \emph{triangular $S$-module} when  $\wbd^2+(bB+Bb)S=0$.
\end{definition}

Given a triangular para-$S$-module $C=(C_{\bt,\bt},\wbd, b,B, S)$ we can form its total para-$S$-module as $\Tot(C)=(\Tot_\bt(C), d^\dagger, S)$, where $\Tot_m(C)= \bigoplus_{p+q=m} C_{p,q}$ and $d^\dagger = \wbd + (-1)^p (b+BS)$ on $C_{p,q}$. When $C$ is a triangular $S$-module the condition $d^2+(bB+Bb)S=0$ ensures us that $(d^\dagger)^2=0$, and so we actually obtain an $S$-module.  

If $C=(C_{\bt,\bt}, \wb, \wB, b,B)$ is a parachain bicomplex, then $\Tot(C)^\natural$ is the total para-$S$-module of the triangular para-$S$-module  
$C^\wsigma: =(C^\wsigma_{\bt,\bt}, \wb+\wB S, b,B,S)$, where $C^\wsigma_{p,q}=C_{p,q}\oplus C_{p-2,q} \oplus \cdots$ and $S$ is the periodicity operator of $\Tot(C)^\natural$, which we regard as an operator $S:C_{\bt,\bt}^\wsigma\rightarrow C_{\bt-2,\bt}^\wsigma$. 

Triangular para-$S$-modules provide us with a natural framework for defining the tensor product of (para-)$S$-modules with mixed complexes and parachain complexes. Suppose we have a tensor product $\otimes_k: \Mod(k)\times \Mod(k)\rightarrow \Mod(k')$ as in Section~\ref{sec:background}. Let $C=(C_\bt, d, S)$ be a para-$S$-module of $k$-modules and $\sC=(\sC_\bt, b,B)$ a parachain complex of $k$-modules. We then form the tensor product $C\otimes_k \sC$ as the triangular para-$S$-module $(C^\natural_\bt \otimes_k \sC_\bt, d, b,B, S)$, where we have denoted by $(d,S)$ (resp., $(b,B)$) the operators $(d\otimes 1, S\otimes 1)$ (resp., $(1\otimes b, 1\otimes B)$).

There is a natural spectral sequence attached with any triangular $S$-module $C=(C_{\bt,\bt},\wbd, b,B, S)$. Let $\{F_p(C)\}_{p \geq 0}$ be the filtration by columns of $\Tot(C)$, so that $F_p(C)_{p+q}= C_{0,p+q}\oplus \cdots \oplus C_{p,q}$. This is a filtration of  chain complexes, and so it gives rise to a spectral sequence $\{E^r_{p,q}\}_{r\geq 0}$ converging to $H_\bt(\Tot(C))$. The $E^1$-term is $E^1_{p,q}=H_{p,q}^v(C)$, where $H_{p,q}^v(C)$ is the $(q+1)$-th homology $k$-module of 
the chain complex $(C_{p,\bt},b)$, $p\geq 0$. The relation $d^2+(bB+Bb)S=0$ implies that $\wbd$ induces a differential on $H_{\bt,q}^v(C)$ for all $q\geq 0$. We denote by $H_{\bt}^hH_q^v(C)$ the homology of $(H_{\bt,q}^v(C), \wbd)$, $q\geq 0$. This gives the $E^2$-term. Therefore, we arrive at the following statement.

\begin{proposition}\label{prop:triang-S-mod.horizontal-SC}
Let $C=(C_{\bt,\bt},\wbd, b,B, S)$ be a triangular $S$-module. Then the filtration by columns of $\Tot(C)$ gives rise to a spectral sequence $E^2_{p,q}(C)= H_{p}^hH_q^v(C) \Rightarrow H_{p+q}(\Tot(C))$. 
\end{proposition}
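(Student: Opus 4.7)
The plan is to apply the standard machinery of the spectral sequence of a filtered chain complex. The total $S$-module $\Tot(C)=(\Tot_\bt(C),d^\dagger,S)$ carries the column filtration $F_p\Tot_{p+q}(C)=\bigoplus_{j\leq p}C_{j,p+q-j}$. First I would check that each $F_p\Tot(C)$ is a $d^\dagger$-subcomplex. Since $d^\dagger=\wbd+(-1)^p(b+BS)$ on $C_{p,q}$, the three summands shift the column index by $-1$, $0$, and $-2$ respectively, so $d^\dagger$ preserves $F_p\Tot(C)$ for every $p$. Because $F_p\Tot_m(C)=\Tot_m(C)$ for $p\geq m$ and $F_{-1}\Tot_m(C)=0$, the filtration is bounded in each degree, and so the associated spectral sequence converges automatically to $H_\bt(\Tot(C))$.

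Next I would identify the first two pages. The associated graded is $E^0_{p,q}=F_p\Tot_{p+q}(C)/F_{p-1}\Tot_{p+q}(C)\cong C_{p,q}$, with $d^0$ given by the filtration-preserving part of $d^\dagger$, namely $(-1)^p b$. Hence $E^1_{p,q}=H_q(C_{p,\bt},b)=H^v_{p,q}(C)$. The differential $d^1$ is induced by the summand of $d^\dagger$ that drops the filtration by exactly one, which is $\wbd$. Because the triangular axioms require $\wbd$ to commute with $b$, it descends to a well-defined map $\wbd\colon H^v_{p,q}(C)\to H^v_{p-1,q}(C)$.

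The one point requiring care, and thus the main thing to verify, is that $(d^1)^2=0$ on $E^1$. This is where the distinction between a triangular \emph{para-}$S$-module and a triangular $S$-module becomes crucial. From the defining relation $\wbd^2+(bB+Bb)S=0$, for any $b$-cycle $x\in C_{p,q}$ one has
\[
\wbd^2 x=-(bB+Bb)Sx=-bBSx,
\]
where the second equality uses $Sb=bS$ together with $bx=0$ to kill the term $BbSx=BSbx$. Since $-bBSx$ is a $b$-boundary, $\wbd^2$ vanishes on $H^v$, and therefore $E^2_{p,q}=H_p(H^v_{\bt,q}(C),\wbd)=H^h_pH^v_q(C)$, as desired. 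I expect no further obstacles: the same identity $\wbd^2+(bB+Bb)S=0$ is precisely what forces $(d^\dagger)^2=0$ on $\Tot(C)$ in the first place, so its reappearance at the $E^1$-page is natural.
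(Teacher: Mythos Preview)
Your proof is correct and follows essentially the same approach as the paper: the paper's argument (given in the paragraph preceding the proposition rather than as a formal proof) likewise invokes the column filtration, identifies $E^1$ with $H^v_{p,q}(C)$, and appeals to the relation $\wbd^2+(bB+Bb)S=0$ to see that $\wbd$ descends to a differential on $H^v$. Your write-up is more explicit about why $d^1=\wbd$ and why $(d^1)^2=0$, but the route is the same.
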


In Definition~\ref{def:horizontal-triangular-S-module} the roles of rows and columns are not symmetric. Interchanging their roles leads us to the following definition.

\begin{definition}
 A \emph{vertical triangular para-$S$-module} is given by the datum of $(C_{\bt,\bt},\wb, \wB, d, S)$, where $C_{p,q}$, $p,q\geq 0$, are $k$-modules, 
 $(C_{p,\bt}, d, S)$ is a para-$S$-module and $(C_{\bt,q}, \wb,\wB)$ is a parachain complex for all $p,q\geq 0$,  
 the vertical operators $(d,S)$ both commute with each of the horizontal differentials $(b,B)$. We say that such a datum is a \emph{vertical triangular $S$-module} when  
 $(\wb\wB+\wB\wb)S+d^2=0$. 
\end{definition}

Any vertical triangular para-$S$-module $C=(C_{\bt,\bt},\wb, \wB, d, S)$ gives rise to a  total para-$S$-module $\Tot(C)=
(\Tot_\bt(C), d^\dagger, S)$, where $d^\dagger =  \wb+\wB S+ (-1)^d$ on $C_{p,q}$. We obtain an $S$-module when $C$ is a triangular $S$-module. 
If $C=(C_{\bt,\bt}, \wb, \wB, b,B)$ is a parachain bicomplex, then $\Tot(C)^\natural$ is the total para-$S$-module of the vertical triangular para-$S$-module $C^\sigma=(C^\sigma_{\bt,\bt}, \wb, \wB, b+BS,S)$, where $C^\sigma_{p,q}=C_{p,q}\oplus C_{p,q-2} \oplus \cdots$ and $S$ is the periodicity operator of $\Tot(C)^\natural$, seen as an operator $S:C_{\bt,\bt}^\sigma\rightarrow C_{\bt,\bt-2}^\sigma$. 

Suppose that we have a tensor product on $\Mod(k)$ as above. Then given a parachain complex  $\sC=(\sC,b,B)$ and a para-$S$-module $C=(C_\bt, d,S)$,  we define the tensor product $\sC \otimes_k C$ as the vertical para-$S$-module $(\sC_\bt \otimes_k C, b, B, d, S)$. 

For any vertical triangular $S$-module $C=(C_{\bt,\bt},\wb, \wB, d, S)$, the filtration by rows of $\Tot(C)$ defines a spectral sequence $\{E^r_{p,q}(C)\}$ converging to its homology. For $p,q\geq 0$, let $H^h_{p,q}(C)$ be the $(p+1)$-th homology $k$-module of the chain complex $(C_{\bt,q},\wb)$. This gives the $E^1$-term. For all $p\geq 0$, we get a chain complex $(H^h_{p,\bt}(C),d)$ whose homology is denoted by $H_\bt^vH^h_p(C)$. This gives the $E^2$-term, and so we arrive at the following result.

\begin{proposition}
 Let $C$ be a vertical triangular $S$-module. Then the filtration by rows of $\Tot(C)$ gives rise to a spectral sequence 
 $E^2_{p,q}(C) = H_p^vH^h_q(C) \Rightarrow H_{p+q}(\Tot(C))$. 
\end{proposition}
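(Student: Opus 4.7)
The argument proceeds in complete parallel with Proposition~\ref{prop:triang-S-mod.horizontal-SC}, with the roles of rows and columns swapped. First I would introduce on $\Tot(C)$ the increasing filtration by rows
\[
F^{q}(\Tot C)_{n}=\bigoplus_{p+q'=n,\,q'\leq q}C_{p,q'},
\]
and check that it consists of subcomplexes of $(\Tot_\bt(C),d^\dagger)$. For this it suffices to inspect the three summands of $d^\dagger=\wb+\wB S+(-1)^{p}d$: $\wb$ leaves the row index $q'$ unchanged, $\wB S$ decreases it by $2$, and $d$ decreases it by $1$, so none of them raise $q'$. Since the filtration is exhaustive and, in each total degree $n$, supported in the finite range $0\leq q\leq n$, the associated spectral sequence converges to $H_\bt(\Tot(C))$ by standard bounded-filtration arguments.

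Next I would identify the successive pages, using the convention that the first spectral-sequence index records the filtration degree, so that $E^0_{p,q}\cong C_{q,p}$ (with $p$ the row and $q$ the column index of $C$). The only component of $d^\dagger$ that is filtration-preserving is $\wb$, which gives $d^0=\wb$ and therefore
\[
E^1_{p,q}=H^h_{q,p}(C).
\]
The only component that lowers filtration by exactly one is $(-1)^{p}d$, so the induced $d^1$ is $\pm d$.

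The one non-formal step, and the place where I would expect to need the defining relation of a vertical triangular $S$-module, is the verification that $d$ descends to a genuine differential on horizontal homology. That $d$ commutes with $\wb$ is built into the definition, so the induced map on $H^h$ is well-defined; the issue is showing that $d^{2}$ vanishes there. Using $d^{2}+(\wb\wB+\wB\wb)S=0$ together with the fact that $S$ commutes with $\wb$, one finds that on a $\wb$-cycle $x$,
\[
d^{2}x=-(\wb\wB+\wB\wb)Sx=-\wb(\wB Sx),
\]
a $\wb$-boundary. Hence $d^{2}$ vanishes on $H^h$, and $E^2_{p,q}=H^v_p H^h_q(C)$ as claimed. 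Everything else is routine bookkeeping for filtered chain complexes; this last verification is the only place where the structure of a vertical triangular $S$-module is actually used, and is the mild obstacle in the proof.
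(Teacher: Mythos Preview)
Your argument is correct and matches the paper's approach: the paper does not give a separate proof but simply records, in the paragraph preceding the proposition, that the row filtration yields a spectral sequence with $E^1$ given by $\wb$-homology and $E^2$ by the $d$-homology thereof. Your only superfluous step is the hand verification that $d^2$ vanishes on $H^h$; once you know the filtration is preserved and the associated graded differential is $\wb$, the fact that $d^1$ squares to zero on $E^1$ is automatic from the spectral sequence machinery, though your direct check via $d^2=-(\wb\wB+\wB\wb)S$ is of course valid and makes the role of the triangular $S$-module relation transparent.
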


In the case of a cylindrical $C$ we have a horizontal triangular $S$-module $C^\wsigma$ and a vertical $S$-module $\wC^\sigma$ whose total $S$-modules agree with the cyclic complex $\Tot(C)^\natural$. Therefore, we obtain two spectral sequences $\{E^r_{p,q}(C^\wsigma)\}$ and $\{E^r_{p.q}(C^\sigma)\}$ that converge to $\HC_\bt(\Tot(C))$. For $p,q\geq 0$ let $H^v_{p,q}(C)$ be the $(q+1)$-th homology $k$-module of the chain complex $(C_{p,\bt}, b)$. The differentials $\wb$ and $\wB$ descend to differentials on $H_{\bt,q}^v(C)$.  These differentials anticommute with each other, and so we obtain a mixed complex $H_q^v(C)=(H^v_{\bt,q},\wb,\wB)$. We then have $H^v_{p,q}(C^\wsigma)=H^v_{p,q}(C)$ and $H_{p}^hH_q^v(C)=\HC_p(H_q^v(C))$. We similarly define horizontal ordinary homology $k$-modules $H^h_{p,q}(C)$ and vertical mixed complexes $H^h_p(C)=(H_{p,\bt}, b,B)$, so that $H^h_{p,q}(C^\sigma)=H^h_{p,q}(C)$ and $H_{q}^vH_p^h(C^\sigma)= \HC_q(H_p^h(C))$. Therefore, we obtain the following result.

\begin{proposition}\label{prop:triang-S-mod.spectral-sequences-cyclindrical}
 Let $C$ be a cylindrical complex. Then the filtration by columns of $\Tot(C^\wsigma)$ and the filtration by rows of $\Tot(C^\sigma)$ give rise to spectral sequences, 
\begin{equation*}
E^2_{p,q}(C^\wsigma)= \HC_p(H_q^v(C)) \Rightarrow \HC_{p+q}(\Tot(C)), \qquad E^2_{p,q}(C^\sigma) = \HC_p(H_q^h(C)) \Rightarrow \HC_{p+q}(\Tot(C)). 
\end{equation*}
\end{proposition}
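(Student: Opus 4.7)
The plan is to derive both spectral sequences by invoking Proposition~\ref{prop:triang-S-mod.horizontal-SC} and its vertical counterpart on the triangular $S$-modules $C^\wsigma$ and $C^\sigma$ described in the paragraphs above. Since the total $S$-modules $\Tot(C^\wsigma)$ and $\Tot(C^\sigma)$ both coincide with the cyclic complex $\Tot(C)^\natural$, the common abutment of both spectral sequences is automatically $\HC_{p+q}(\Tot(C))$, so everything reduces to identifying the $E^2$-pages.

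For the first spectral sequence, I would first compute $H^v_q(C^\wsigma)$. Since the vertical $b$-differential of $C^\wsigma$ is inherited from $C$ and acts diagonally on each horizontal $S$-summand, the column chain complex $(C^\wsigma_{p,\bt},b)$ splits as a direct sum of the vertical complexes $(C_{p-2k,\bt},b)$. Consequently the horizontal para-$S$-module $(H^v_{\bt,q}(C^\wsigma),\wb+\wB S,S)$ is nothing but the cyclic $S$-module associated with the parachain complex $H^v_q(C):=(H^v_{\bt,q}(C),\wb,\wB)$. Taking horizontal homology therefore yields $E^2_{p,q}(C^\wsigma)=H_p\bigl(H^v_q(C)^\natural\bigr)$, which coincides with $\HC_p(H^v_q(C))$ once we know that $H^v_q(C)$ is an honest mixed complex.

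Promoting $H^v_q(C)$ from parachain to mixed is the main technical point and the only place the cylindrical hypothesis actually enters. Using $\wT T=1$ with $T=1-(bB+Bb)$, I would argue that $T$ descends to the identity on the vertical homology $H^v(C)$: the summand $bB$ is a $b$-boundary, while $Bb$ annihilates any $b$-cycle. Inverting, $\wT=T^{-1}$ likewise descends to the identity on $H^v_q(C)$, and therefore $\wb\wB+\wB\wb=1-\wT=0$ on $H^v_q(C)$, which confirms that $H^v_q(C)$ is indeed a mixed complex.

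The second spectral sequence is obtained by the strictly symmetric argument: I would apply the vertical analogue of Proposition~\ref{prop:triang-S-mod.horizontal-SC} to $C^\sigma$, identify its row-wise $\wb$-homology with the cyclic $S$-module of the parachain complex $H^h_q(C)=(H^h_{q,\bt}(C),b,B)$, and then invoke $T\wT=1$ with the roles of rows and columns interchanged to promote $H^h_q(C)$ from a parachain to a mixed complex. This yields $E^2_{p,q}(C^\sigma)=\HC_p(H^h_q(C))$. The main, and essentially only non-formal, obstacle throughout is precisely this promotion from parachain to mixed via the cylindrical condition; everything else is direct bookkeeping against the triangular $S$-module spectral sequences already established.
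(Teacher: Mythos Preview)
Your proposal is correct and follows essentially the same approach as the paper: the paper's argument, given in the paragraph immediately preceding the proposition, likewise applies Proposition~\ref{prop:triang-S-mod.horizontal-SC} (and its vertical analogue) to $C^\wsigma$ and $C^\sigma$, identifies the $E^1$-page with the cyclic $S$-module of $H^v_q(C)$ (resp.\ $H^h_q(C)$), and asserts that $\wb$ and $\wB$ anticommute on vertical homology so that $H^v_q(C)$ is a genuine mixed complex. Your explicit justification of this last point via $\wT T=1$ is exactly the mechanism the paper has in mind when it invokes the cylindrical hypothesis.
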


\begin{remark}
The spectral sequence of Getzler-Jones~\cite{GJ:Crelle93} is a special case of spectral sequence $E^2_{p,q}(C^\sigma)$.
\end{remark}

\emph{From now on, we assume that $k$ is a commutative ring containing $\Q$. In addition, we let $\Gamma$ be a group acting on a unital $k$-algebra $\cA$.}

\section{The Cylindrical Complexes $C^\phi(\Gamma,\sC)$} \label{sec:CphiGA}
As we shall show in the next section, the computation of the cyclic homology of the crossed-product algebra $\cA\rtimes \Gamma$ reduces to the cyclic homology of some cylindrical complexes $C^\phi(\Gamma,\sC)$. The goal of this section is to present the construction of those cylindrical complexes. 

The standard cyclic $k\Gamma$-module of $\Gamma$ is $C(\Gamma)=(C_\bt(\Gamma), d,s,t)$, where 
$C_m(\Gamma)=k \Gamma^{m+1}$, $m\geq 0$, and  the structural operators $(d,s,t)$ are given by
\begin{align}
d(\psi_0,\ldots,\psi_m)&= (\psi_0,\ldots, \psi_{m-1}),
\label{eq:cyclic-module-Gamma-d} \\
 s(\psi_0,\ldots,\psi_m)&= (\psi_m,\psi_0,\ldots, \psi_m),
\\
 t(\psi_0,\ldots,\psi_m)&= (\psi_m,\psi_0,\ldots, \psi_{m-1}), \qquad \psi_j\in \Gamma. 
\end{align}
Its geometric realization is the universal contractible $\Gamma$-bundle $E\Gamma$ (a.k.a.~Milnor's construction). Its $b$-differential is the standard group differential,  
\begin{equation}
 \partial(\psi_0,\ldots,\psi_m) =\sum_{0\leq j \leq m}(-1)^j (\psi_0,\ldots,\hat{\psi}_j, \ldots, \psi_m), \qquad \psi_j\in \Gamma. 
 \label{eq:cyclic-module-Gamma-partial}
\end{equation}
Given any $k\Gamma$-module $\sM$, the \emph{group homology} $H_\bt(\Gamma, \sM)$ is the homology of the chain complex $(C_\bt(\Gamma, \sM), \partial)$, where $C_m(\Gamma, \sM)=C_m(\Gamma)\otimes_\Gamma \sM$, $m \geq 0$. The \emph{group cohomology} $H^\bt(\Gamma, \sM)$ is the cohomology of the dual cochain complex $(C^\bt(\Gamma, \sM), \partial)$, where $C^m(\Gamma, \sM)$ consists of all $\Gamma$-equivariant maps $u:\Gamma^{m+1}\rightarrow \sM$ and $\partial u =u\circ \partial$. When $\sM=k$ we recover the homology and cohomology of the classifying space $B\Gamma=E\Gamma/\Gamma$ with coefficients in $k$. 

Let $\phi$ be a central element of $\Gamma$. We can use $\phi$ to twist the cyclic module structure of $C_\bt(\Gamma)$ to get the paracyclic $k\Gamma$-module $C^\phi(\Gamma)=(C_\bt(\Gamma),d,s_\phi,t_\phi)$, where the end-face $d$ is given by~(\ref{eq:cyclic-module-Gamma-d}) and the operators $(s_\phi, t_\phi)$ are given by
\begin{align*}
 s_\phi(\psi_0,\ldots,\psi_m)&= (\phi^{-1}\psi_m,\psi_0,\ldots, \psi_m), \\ 
 t_\phi(\psi_0,\ldots,\psi_m)&= (\phi^{-1}\psi_m,\psi_0,\ldots, \psi_{m-1}), \qquad \psi_j\in \Gamma. 
\end{align*}
For $\phi=1$ we recover the cyclic $k\Gamma$-module $C(\Gamma)$. The simplicial module structures of $C^\phi(\Gamma)$ and $C(\Gamma)$ agree, and so the $b$-differential of $C^\phi(\Gamma)$ is the group differential $\partial$ given by~(\ref{eq:cyclic-module-Gamma-partial}). Note also that $t_\phi^{m+1}(\psi_0,\ldots,\psi_m)=(\phi^{-1}\psi_0,\ldots,\phi^{-1}\psi_m)$, i.e., $t_\phi^{m+1}$ agrees with the action of $\phi^{-1}$ on $C_{m}(\Gamma)$. 
 
 In what follows,  we shall call \emph{ $\phi$-parachain complex} (of $k\Gamma$-modules) any parachain complex of $k\Gamma$-modules $\sC=(\sC_\bt,b,B)$ such that the operator $T=1-(bB+Bb)$ is given by the action of $\phi^{-1}$. We also define a \emph{ $\phi$-paracyclic $k\Gamma$-module} as a paracyclic $k\Gamma$-module $\sC=(\sC_\bt, d,s, t)$ such that $t^{m+1}$ is given by the action of $\phi^{-1}$. That is, the associated parachain complex is a $\phi$-parachain complex. (A $\phi$-paracylic $k$-module is also a $\theta$-cyclic object in the sense of Crainic~\cite{Cr:KT99}.) When $\phi=1$ a parachain complex (resp., $\phi$-paracyclic module) is just a usual mixex complex (resp., cyclic module). 
 
 The paracyclic $k\Gamma$-module $C^\phi(\Gamma)$ is a $\phi$-paracyclic module. Another example of $\phi$-paracyclic $k\Gamma$-module is the twisted cyclic $k\Gamma$-module  $C^\phi(\cA)= (C_\bt(\cA), d_\phi,s,t_\phi)$, where the extra degeneracy $s$ is given by~(\ref{eq:cylic-algebra-s}) and  the operators $(d_\phi, t_\phi)$ are given by 
\begin{align*}
 d_\phi(a^0\otimes \cdots \otimes a^m)&=(\phi^{-1}a^m)\otimes a^0\otimes \cdots \otimes a^{m}, \\
  t_\phi(a^0\otimes \cdots \otimes a^m)&=(\phi^{-1}a^m)\otimes a^0\otimes \cdots \otimes a^{m-1}, \qquad a^j\in \cA. 
\end{align*}
 
Given (left) $k\Gamma$-modules $\sM_1$ and $\sM_2$ we shall denote by $\sM_1\otimes_\Gamma \sM_2$ their tensor product over $k\Gamma$, i.e., the quotient of $\sM_1\otimes_k \sM_2$ by the action of $\Gamma$.  This provides us a with a bifunctor from $\Mod(k\Gamma)\times \Mod(k\Gamma)$ to $\Mod(k)$. Therefore,  if  $\sC$ and $\sC'$ are parachain complexes of $k\Gamma$-modules (resp., paracyclic $k\Gamma$-modules), then we can form their tensor product $\sC \otimes_\Gamma \sC'$ as in Section~\ref{sec:background} to get a parachain bicomplex of $k$-modules (resp., a bi-paracyclic $k$-modules). We observe that when $\sC$ and $\sC'$ are 
$\phi$-parachain complexes (resp., $\phi$-paracyclic modules), the tensor product $\sC \otimes_\Gamma \sC'$ is a cylindrical complex (resp., a cylindrical $k$-module).

In the following, when $\sC$ is  a $\phi$-parachain complex of $k\Gamma$-module (resp., $\phi$-paracyclic $k\Gamma$-module) 
we shall denote by $C^\phi(\Gamma, \sC)$ the cylindrical complex (resp., cylindrical $k$-module) $C^\phi(\Gamma)\otimes_\Gamma \sC$. We shall use the notation $C^\phi(\Gamma, \cA)$ for $\sC=C^\phi(\cA)$.

\begin{remark}
The diagonal cyclic $k$-module $\Diag(C^\phi(\Gamma, \cA))$ is isomorphic to the cyclic $k$-module $\cA_\phi[\Gamma]^{\natural \natural}$ of Feigin-Tsygan~\cite[\S\S 4.1]{FT:LNM87}. It is also isomorphic to the cyclic $k$-module $\tilde{L}(A,\Gamma, \phi)$ of Nistor~\cite[\S\S 2.5]{Ni:InvM90}. 
\end{remark}

\begin{remark}\label{rmk:CphiGA-Getzler-Jones}
The cylindrical module $C^\phi(\Gamma, \cA)$ itself is isomorphic to the cylindrical $k$-module $\cA_\phi^\natural$ of Getzler-Jones~\cite[\S 4]{GJ:Crelle93}. 
The main difference between the cylindrical modules $C^\phi(\Gamma, \cA)$ and $\cA_\phi^\natural$ lies in the fact that the former is realized as a tensor product over $\Gamma$ of two paracyclic $k\Gamma$-modules. From a geometric point of view, this is the natural simplicial counterpart to Borel's construction of the homotopy quotient $E\Gamma \times_\Gamma M$. This makes the computations be somewhat more transparent. 
\end{remark}

Finally, it can be shown that  the tensor products $C_m(\Gamma)\otimes_\Gamma -$, $m\geq 0$,  are exact functors from $\Mod (k \Gamma)$ to $\Mod(k)$. Therefore,  we obtain the following functoriality result.

\begin{proposition}\label{prop:functoriality-CphiAsC}
 Suppose that $\alpha: \sC_\bt \rightarrow \sC_\bt'$ is a quasi-isomorphism of $\phi$-parachain complexes. Then the mixed complex map
 $\alpha \otimes 1: \Tot_\bt(C^\phi(\Gamma, \sC))\rightarrow   \Tot_\bt(C^\phi(\Gamma, \sC'))$ is a quasi-isomorphism. 
\end{proposition}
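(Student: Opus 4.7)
The plan is a spectral sequence comparison argument built upon the exactness of the functors $C_m(\Gamma) \otimes_\Gamma -$ asserted in the sentence preceding the proposition. First I would observe that $\alpha \otimes 1$ is a morphism of mixed bicomplexes: since $\alpha$ is a parachain complex map between $\phi$-parachain complexes, it commutes with both $b$ and $B$, while the identity on $C^\phi(\Gamma)$ trivially commutes with $\wb$ and $\wB$. Thus $\alpha \otimes 1$ descends to a morphism of total mixed complexes, and to check that it is a quasi-isomorphism it suffices to prove that it induces an isomorphism on the ordinary $b^\dagger$-homology of $\Tot_\bt(C^\phi(\Gamma, \sC))$.

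Next I would filter the total complex by columns, setting
\[ F_p\Tot_m := \bigoplus_{\substack{p'+q=m \\ p' \leq p}} C_{p'}(\Gamma) \otimes_\Gamma \sC_q. \]
This is an exhaustive increasing filtration which is bounded in each total degree (only finitely many columns contribute), so the associated spectral sequence converges strongly to $H_\bt(\Tot(C^\phi(\Gamma, \sC)), b^\dagger)$. On $E^0$ the differential is $\pm (1 \otimes b)$, and the exactness of $C_p(\Gamma) \otimes_\Gamma -$ --- a direct consequence of the fact that $C_p(\Gamma) = k\Gamma^{p+1}$ is free as a $k\Gamma$-module under the diagonal action, with a $k$-basis furnished by the tuples $(1, \eta_1, \ldots, \eta_p)$ obtained via the change of variables $(\psi_0, \ldots, \psi_p) = \psi_0 \cdot (1, \psi_0^{-1}\psi_1, \ldots, \psi_0^{-1}\psi_p)$ --- allows homology to commute with the tensor product, yielding
\[ E^1_{p,q} \;=\; C_p(\Gamma) \otimes_\Gamma H_q(\sC_\bt, b). \]

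Since $\alpha$ is by hypothesis a quasi-isomorphism of parachain complexes, each $H_q(\alpha)$ is an isomorphism of $k\Gamma$-modules, and therefore $1 \otimes H_q(\alpha)$ is an isomorphism $E^1_{p,q}(\sC) \to E^1_{p,q}(\sC')$. The standard comparison theorem for convergent spectral sequences then promotes this to an isomorphism on the $E^\infty$-page, and boundedness of the filtration in each total degree forces $\alpha \otimes 1$ to be an isomorphism on the ordinary homology of the total mixed complex. The only non-formal ingredient is the exactness of $C_m(\Gamma) \otimes_\Gamma -$, which is granted in the paper; beyond that, the proof is a routine spectral sequence comparison, so I do not anticipate any serious obstacle.
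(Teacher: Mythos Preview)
Your argument is correct and is precisely the standard unpacking of the paper's one-line justification: the paper does not give a detailed proof but merely records that each $C_m(\Gamma)\otimes_\Gamma -$ is exact (equivalently, that $C_m(\Gamma)$ is a free $k\Gamma$-module, via the change of variables you describe) and states the proposition as an immediate consequence. Your column-filtration spectral sequence comparison is exactly how one converts that exactness into the quasi-isomorphism on the total complex, so the approach matches the paper's intended one.
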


\section{Splitting along Conjugacy Classes.}\label{sec:conj-classes} 
In this section, we explain how the computation of the cyclic homology of crossed-product algebras can be reduced to studying the cyclic structure of 
cylindrical modules of the type constructed in the previous section. 

In what follows we denote by $\cA_\Gamma$ the crossed-product algebra $\cA\rtimes \Gamma$. Recall that $\cA_\Gamma$ is a unital $k$-algebra with generators $a\in \cA$ and $u_\phi$, $\phi \in \Gamma$, subject to the relations,
\[
 a^0u_{\phi_0} a^1u_{\phi_1} =a^0(\phi_0^{-1} a^1)u_{\phi_0\phi_1}, \qquad a^j \in \cA, \ \phi_j\in \Gamma. 
 \] 

Given any $\phi \in \Gamma$, we shall denote by $[\phi]$ its conjugacy class in $\Gamma$. 
For $m\geq 0$, we also let $C_m(\cA_\Gamma)_{[\phi]}$ be the submodule of $C_m(\cA_\Gamma)$ generated by elementary tensor products $a^0u_{\phi_0}\otimes \cdots \otimes a^mu_{\phi_m}$, with $a^0,\ldots, a^m$ in $\cA$ and $\phi_0,\ldots, \phi_m$ in $\Gamma$ such that $\phi_0 \cdots \phi_m\in [\phi]$. This condition is invariant under the action of the structural operators $(d,s,t)$ of the cyclic $k$-module $C(\cA_\Gamma)$. Therefore, we obtain a cyclic submodule $C(\cA_\Gamma)_{[\phi]}$. We then have the following decomposition of cyclic $k$-modules, 
\begin{equation}
 C_\bt(\cA_\Gamma)= \bigoplus C_\bt(\cA_\Gamma)_{[\phi]},
 \label{eq:splitting-CAG}
\end{equation}
where the sum goes over all conjugacy classes in $\Gamma$. This provides us with a corresponding decomposition of cyclic complexes along with a canonical inclusion of periodic cyclic complexes. Let us denote by $\HC_\bt(\cA_\Gamma)_{[\phi]}$ (resp., $\HP_\bt(\cA_\Gamma)_{[\phi]}$) the cyclic homology (resp., periodic cyclic homology)
of $C(\cA_\Gamma)_{[\phi]}$. Then we have the following splitting and canonical inclusion of graded $k$-modules, 
\begin{equation}
 \HC_\bt(\cA_\Gamma)= \bigoplus \HC_\bt(\cA_\Gamma)_{[\phi]} \qquad  \text{and} \qquad  \bigoplus \HP_\bt(\cA_\Gamma)_{[\phi]}\subset  \HP_\bt(\cA_\Gamma).
 \label{eq:splitting-HCHP}
 \end{equation}
Moreover, the inclusion above is actually onto when $\Gamma$ has a finite number of conjugacy classes. 

Let $\phi \in \Gamma$ and denote by $\Gamma_\phi$ its centralizer in $\Gamma$. As $\phi$ is a central element of $\Gamma_\phi$, we may form the cylindrical complex $C^\phi(\Gamma_\phi,\cA)$ as in Section~\ref{sec:CphiGA}. We have a natural embedding of cyclic $k$-modules 
$\mu_\phi: \Diag_\bt(C^\phi(\Gamma_\phi,\cA))\rightarrow C_\bt(\cA_\Gamma)_{[\phi]}$ given by
 \begin{multline}
 \mu_{\phi}\left((\psi_{0},\ldots,\psi_{m})\otimes_{\Gamma_\phi}(a^{0}\otimes \cdots \otimes a^{m})\right)= \\ 
    [(\psi_{m}^{-1}\phi)\cdot a^{0}]u_{\phi\psi_{m}^{-1} \psi_{0}}\otimes 
     ( \psi_{0}^{-1}\cdot a^{1})u_{\psi_{0}^{-1}\psi_{1}}\otimes \cdots \otimes ( \psi_{m-1}^{-1}\cdot a^{m})u_{\psi_{m-1}^{-1}\psi_{m}}. 
     \label{eq:splitting.mu-phi}
 \end{multline}
 This embedding can be shown to be a quasi-isomorphism by using Shapiro's lemma. Furthermore, when $\phi=1$ the embedding $\mu=\mu_1$ is actually is an isomorphism of cyclic $k$-modules. Its inverse $ \mu^{-1}:C_\bt(\cA_\Gamma)_{[1]}\rightarrow \Diag_\bt(C(\Gamma,\cA))$ is such that, for all  $a^0,\ldots, a^m$ in $\cA$ and $\phi_0, \ldots, \phi_m$ in $\Gamma$ with $\phi_0 \cdots \phi_m=1$, we have
 \begin{equation}
 \mu^{-1}\left( a^0u_{\phi_0}\otimes \cdots \otimes a^mu_{\phi_m}\right)= \left( \hat{\phi}_0, \ldots, \hat{\phi}_{m}\right)\otimes_{\Gamma} \left( 
 a^0\otimes ( \hat{\phi}_0\cdot a^1) \otimes \cdots \otimes  ( \hat{\phi}_{m-1}\cdot a^m)\right), 
   \label{eq:splitting.inverse-mu}
\end{equation}
 where we have set $ \hat{\phi}_j=\phi_0 \cdots \phi_j$, $j=0,\ldots,m-1$.  
 
 Combining all this with the bi-paracyclic Eilenberg-Zilber theorem (Proposition~\ref{prop:bi-paracyclic-EZ}) we then arrive at the following statement.

\begin{proposition}\label{prop:quasi-isomorphism-CphiGA-CAGphi}
 Let $\phi \in \Gamma$. Then the following are quasi-isomorphisms,
 \begin{equation}
\Tot_\bt\left(C^\phi(\Gamma_\phi,\cA)\right)^\natural\xrightleftharpoons[\AW^\natural]{\shuffle^\natural} \Diag_\bt\left(C^\phi(\Gamma_\phi,\cA)\right)^\natural \xrightarrow{\mu_\phi} C_\bt(\cA_\Gamma)_{[\phi]}^\natural. 
\label{eq:quasi-isomorphism-CphiGA-CAGphi}
\end{equation}
There are similar results  at the level of the corresponding periodic cyclic complexes. 
\end{proposition}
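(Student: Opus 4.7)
The plan is to establish the rightmost map $\mu_\phi$ as a quasi-isomorphism of cyclic modules separately, and then combine it with the bi-paracyclic Eilenberg--Zilber equivalence of Proposition~\ref{prop:bi-paracyclic-EZ} to obtain the full chain in~(\ref{eq:quasi-isomorphism-CphiGA-CAGphi}). Since $\phi$ is central in $\Gamma_\phi$, the twisted cyclic modules $C^\phi(\Gamma_\phi)$ and $C^\phi(\cA)$ are both $\phi$-paracyclic $k\Gamma_\phi$-modules, so their tensor product $C^\phi(\Gamma_\phi,\cA)=C^\phi(\Gamma_\phi)\otimes_{\Gamma_\phi} C^\phi(\cA)$ is a cylindrical $k$-module, and its diagonal $\Diag_\bt(C^\phi(\Gamma_\phi,\cA))$ inherits a genuine cyclic $k$-module structure. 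Thus all three terms in~(\ref{eq:quasi-isomorphism-CphiGA-CAGphi}) are honest cyclic $k$-modules and the comparison makes sense at the level of $S$-modules.

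First I would verify that the formula~(\ref{eq:splitting.mu-phi}) defines a cyclic $k$-module map. The key check is well-definedness modulo $\Gamma_\phi$: the element $\mu_\phi((\psi_0\cdot\lambda,\ldots,\psi_m\cdot\lambda)\otimes \lambda^{-1}\cdot(a^0\otimes\cdots\otimes a^m))$ must coincide with~(\ref{eq:splitting.mu-phi}) for every $\lambda\in\Gamma_\phi$, which follows from the centrality of $\phi$ in $\Gamma_\phi$ (so $\lambda$ commutes with $\phi$ inside $u_{\phi\psi_m^{-1}\psi_0}$). Compatibility with the face, degeneracy, and cyclic operators of the diagonal cyclic module $\Diag(C^\phi(\Gamma_\phi,\cA))$ is then a direct bookkeeping computation, where in particular the last face $d_m$ and cyclic operator $t$ on the right-hand side absorb the twisting by $\phi^{-1}$ appearing in $t_\phi$ through the presence of $u_{\phi\psi_m^{-1}\psi_0}$ in the leading factor. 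For $\phi=1$ the formula~(\ref{eq:splitting.inverse-mu}) provides an explicit two-sided inverse, showing $\mu=\mu_1$ is an isomorphism of cyclic $k$-modules.

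The hard part is showing that $\mu_\phi$ is a quasi-isomorphism for general $\phi$, which I would prove via Shapiro's lemma. The point is to reinterpret the $[\phi]$-component $C_m(\cA_\Gamma)_{[\phi]}$ as a module induced from $\Gamma_\phi$ to $\Gamma$: after choosing coset representatives for $\Gamma/\Gamma_\phi$, any elementary tensor $a^0u_{\phi_0}\otimes\cdots\otimes a^mu_{\phi_m}$ with $\phi_0\cdots\phi_m\in [\phi]$ can be uniquely written (up to the $\Gamma_\phi$-action) in terms of a $\Gamma_\phi$-conjugate $\phi$ together with data over $\Gamma_\phi^{m+1}$; this identifies the Hochschild complex of $C(\cA_\Gamma)_{[\phi]}$ with $C_\bt(\Gamma)\otimes_{\Gamma}\op{Ind}_{\Gamma_\phi}^\Gamma(C_\bt(\cA)_\phi)$ in a way that matches $\mu_\phi$, and Shapiro's lemma together with the exactness of $C_m(\Gamma)\otimes_\Gamma(-)$ (which gives Proposition~\ref{prop:functoriality-CphiAsC} here) yields the desired quasi-isomorphism at the level of the $b$-complex.

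Once $\mu_\phi$ is known to be a quasi-isomorphism of cyclic modules, its induced map on cyclic complexes is automatically a quasi-isomorphism by the standard $SBI$ spectral sequence comparison, and the corresponding map on periodic complexes is also a quasi-isomorphism by passing to the inverse limit (the filtration being Hausdorff and degreewise eventually constant on each bidegree). For the left half of~(\ref{eq:quasi-isomorphism-CphiGA-CAGphi}), Proposition~\ref{prop:bi-paracyclic-EZ} applied to the bi-paracyclic module $C^\phi(\Gamma_\phi)\otimes_{\Gamma_\phi} C^\phi(\cA)$ yields $S$-maps $\shuffle^\natural$ and $\AW^\natural$ that are mutual $S$-homotopy inverses whose zeroth components are the classical shuffle and Alexander--Whitney maps, hence quasi-isomorphisms, and these extend to the periodic cyclic complexes as in Section~\ref{sec:background}. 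Composing the two halves yields the statement.
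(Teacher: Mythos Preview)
Your proposal is correct and follows essentially the same approach as the paper: the paper also observes that $\mu_\phi$ is an embedding of cyclic $k$-modules which is a quasi-isomorphism by Shapiro's lemma (and an isomorphism when $\phi=1$), and then invokes the bi-paracyclic Eilenberg--Zilber theorem (Proposition~\ref{prop:bi-paracyclic-EZ}) for the left half. Your write-up simply fills in more of the Shapiro argument and the passage from Hochschild to cyclic quasi-isomorphisms than the paper does.
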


The splitting and inclusion in~(\ref{eq:splitting-HCHP}) reduce the computation of the cyclic homology and periodic cyclic homology of $\cA_\Gamma$ to the study of the cyclic $k$-modules $C(\cA_\Gamma)_{[\phi]}$. Combining Proposition~\ref{prop:quasi-isomorphism-CphiGA-CAGphi} 
with Lemma~\ref{lem:homotopy-lemma} and Proposition~\ref{prop:functoriality-CphiAsC} further shows that the study of $C(\cA_\Gamma)_{[\phi]}$ 
can be split into studying separately each of the $\phi$-paracyclic $k\Gamma_\phi$-modules  $C^\phi(\Gamma_\phi)$ and $C^\phi(\cA)$. 

\section{The Cyclic Module $C(\cA_\Gamma)_{[\phi]}$ (Finite Centralizer Case)}\label{sec:finite}
Suppose that $\Gamma$ is finite and $\phi$ is a central element of $\Gamma$. Let $C(k)$ be the trivial complex $(C_\bt(k),0,0)$, where $C_0(k)=k$ and $C_m(k)=\{0\}$ for $m\geq 1$. We observe that, given any $\phi$-parachain complex $\sC=(\sC_\bt, b,B)$ the mixed complex $\Tot_\bt(C(k)\otimes_\Gamma \sC)$ is canonically identified with the $\Gamma$-invariant subcomplex $\sC^\Gamma=(\sC^\Gamma_\bt, b,B)$, where $\sC^\Gamma_m$ is the $\Gamma$-invariant submodule of $\sC_m$. Note also that $\sC^\Gamma$ is naturally a mixed complex, since $bB+Bb=1-\phi^{-1}=0$ on $\sC_\bt^\Gamma$. 

Let $\pi_0:C_\bt^\phi(\Gamma) \rightarrow C_\bt(k)$ be the $k\Gamma$-module map such that $\pi_0=0$ in degree~$m\geq 1$ and $\pi_0(\psi)=1$ for all $\psi \in \Gamma$ in degree~0. This is a map of parachain complexes. Moreover, the natural inclusion $\iota_\Gamma:C_\bt(k)\rightarrow C_\bt(\Gamma)$ is a right-inverse and homotopy left-inverse of the chain map $\pi_0:(C_\bt^\phi(\Gamma),\partial) \rightarrow (C_\bt(k),0)$. Therefore, by using Lemma~\ref{lem:homotopy-lemma} 
we obtain the following result. 

\begin{proposition}\label{prop:quasi-isom-Gfinite}
 Suppose that $\Gamma$ is finite and $\phi$ is a central element of $\Gamma$. 
 \begin{enumerate}
\item[(1)] The $S$-map $\pi_0: C_\bt^\phi(\Gamma)^\natural \rightarrow C_\bt(k)^\natural$ has an explicit right-inverse and $S$-homotopy left-inverse $\iota_\Gamma^\natural:  C_\bt(k)^\natural  \rightarrow C_\bt^\phi(\Gamma)^\natural$, which is an $S$-map whose zeroth degree component is $\iota_\Gamma$. 

\item[(2)] For any $\phi$-parachain complex $\sC$, the $S$-map $\iota_\Gamma^\natural: \sC^{\Gamma,\natural}_\bt \rightarrow \Tot_\bt(C^\phi(\Gamma,\sC))^\natural$ is a right-inverse and $S$-homotopy left-inverse of $\pi_0 \otimes 1:  \Tot_\bt(C^\phi(\Gamma,\sC))^\natural \rightarrow \sC^{\Gamma,\natural}_\bt$. 
\end{enumerate}
\end{proposition}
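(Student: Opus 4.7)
The strategy is to apply Lemma~\ref{lem:homotopy-lemma} with $C=C^\phi(\Gamma)$ (a $\phi$-parachain complex of $k\Gamma$-modules), $\wC=C(k)$ (concentrated in degree zero with trivial $\Gamma$-action and zero $B$-differential), $f=\pi_0$, and $g=\iota_\Gamma$, working in the category $\Mod(k\Gamma)$ with tensor product $\otimes_\Gamma\colon\Mod(k\Gamma)\times\Mod(k\Gamma)\to\Mod(k)$. The only non-formal input required is a $k\Gamma$-equivariant contracting homotopy for $\pi_0\colon(C_\bt^\phi(\Gamma),\partial)\to(C_\bt(k),0)$, and this is precisely where the finiteness of $\Gamma$ enters.

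I would first verify that $\pi_0$ is a $k\Gamma$-linear parachain complex map: equivariance is clear, compatibility with $\partial$ is routine (nontrivial only in degree~$1$, where $\partial(\psi_0,\psi_1)=(\psi_1)-(\psi_0)$ is sent by $\pi_0$ to $1-1=0$), and compatibility with $B$ is automatic because both $\pi_0 B$ and $B\pi_0$ factor through $C_\bt(k)$ in positive degree. Next I set $\iota_\Gamma(1)=|\Gamma|^{-1}\sum_{\psi\in\Gamma}(\psi)$, which is $k\Gamma$-linear (the averaged element is $\Gamma$-invariant) and satisfies $\pi_0\iota_\Gamma=\mathrm{id}$ on the nose. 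For the contracting homotopy, I would average the usual extra degeneracy of the bar resolution,
\[
h(\psi_0,\ldots,\psi_m)\;=\;-\frac{1}{|\Gamma|}\sum_{\psi\in\Gamma}(\psi,\psi_0,\ldots,\psi_m).
\]
A change of summation variable makes $h$ $k\Gamma$-linear under the diagonal $\Gamma$-action, and the standard simplicial computation for the bar complex yields $\partial h+h\partial=\iota_\Gamma\pi_0-\mathrm{id}$ in every degree. Both formulas depend crucially on $|\Gamma|^{-1}\in k$, which is the only point where the finiteness hypothesis on $\Gamma$ is used.

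Part~(1) then follows directly from Lemma~\ref{lem:homotopy-lemma}(1), which produces $\iota_\Gamma^\natural$ with zeroth degree component $\iota_\Gamma$ and the $S$-homotopy left-inverse property. The strict right-inverse $\pi_0\iota_\Gamma^\natural=\mathrm{id}$ is inherited from the chain-level identity $\pi_0\iota_\Gamma=\mathrm{id}$ because the higher-order corrections $\iota_\Gamma^{(j)}$, $j\geq 1$, produced by the perturbation procedure can be chosen inside $\ker\pi_0$, a standard feature of perturbation lemmas. Part~(2) follows from Lemma~\ref{lem:homotopy-lemma}(2) by tensoring with the $\phi$-parachain complex $\sC$ over $k\Gamma$, together with the canonical identification $\Tot_\bt(C(k)\otimes_\Gamma\sC)=\sC^\Gamma$ recorded in the paragraph preceding the proposition (valid via the norm isomorphism between $\Gamma$-coinvariants and $\Gamma$-invariants for finite $\Gamma$). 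The principal obstacle throughout is producing the homotopy $h$ $k\Gamma$-equivariantly; once that is in hand, the rest is essentially formal.
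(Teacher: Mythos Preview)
Your proposal is correct and follows essentially the same approach as the paper: verify that $\pi_0$ is a parachain complex map with homotopy inverse $\iota_\Gamma$, then invoke Lemma~\ref{lem:homotopy-lemma}. The paper states this tersely in the paragraph preceding the proposition; you have simply supplied the explicit averaged formulas for $\iota_\Gamma$ and the contracting homotopy, which the paper leaves implicit. One small remark: the strict right-inverse identity $\pi_0\iota_\Gamma^\natural=\mathrm{id}$ holds automatically rather than by choice, since the higher corrections $\iota_\Gamma^{(j)}$ land in $C_{2j}(\Gamma)$ with $j\geq 1$ and $\pi_0$ vanishes there by definition.
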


In particular, the 2nd part of Proposition~\ref{prop:quasi-isom-Gfinite} implements an explicit deformation retract of 
$\Tot(C^\phi(\Gamma,\sC))^\natural$ onto $\sC^{\Gamma,\natural}$. Combining this with Proposition~\ref{prop:functoriality-CphiAsC} 
and Proposition~\ref{prop:quasi-isomorphism-CphiGA-CAGphi} 
 we arrive at the following statement.

\begin{theorem}\label{thm:finite.quasi-isomorphism-CAGphi} 
Let $\phi\in \Gamma$ have a finite centralizer, and suppose we are given a quasi-isomorphism of $\phi$-parachain complexes $\alpha: C^\phi_\bt(\cA)\rightarrow \sC_\bt$.\begin{enumerate}
\item[(1)] We have explicit quasi-isomorphisms of cyclic complexes, 
\[
  \sC^{\Gamma_\phi,\natural}_\bt  \xleftarrow{\pi_0\otimes \alpha}
 \Tot_\bt\left(C^\phi(\Gamma_\phi,\cA)\right)^\natural\xrightleftharpoons[\AW^\natural]{\shuffle^\natural} \Diag_\bt\left(C^\phi(\Gamma_\phi,\cA)\right)^\natural \xrightarrow{\mu_\phi} C_\bt(\cA_\Gamma)_{[\phi]}^\natural.
 \] 
Moreover, if  $\alpha$ has a quasi-inverse (resp., homotopy inverse) $\beta:  \sC_\bt \rightarrow C^\phi_\bt(\cA)$, then 
$  (1\otimes \beta) \iota_{\Gamma_\phi}^\natural$ is a quasi-inverse (resp., homotopy inverse) of $\pi_0\otimes \alpha$.  

\item[(2)]  There are analogous statements for the corresponding periodic cyclic complexes. 

\item[(3)] This provides us with isomorphisms $\HC_\bt(\cA_\Gamma)_{[\phi]}\simeq \HC_\bt(\sC^{\Gamma_\phi})$ and $\HP_\bt(\cA_\Gamma)_{[\phi]}\simeq 
\HP_\bt(\sC^{\Gamma_\phi})$. 
\end{enumerate}
\end{theorem}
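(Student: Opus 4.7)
\smallskip

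\noindent\textbf{Proof proposal.} The plan is to assemble the theorem directly from the three ingredients already at our disposal: Proposition~\ref{prop:quasi-isomorphism-CphiGA-CAGphi} (which handles the right two arrows involving $\shuffle^\natural$, $\AW^\natural$, and $\mu_\phi$), Proposition~\ref{prop:quasi-isom-Gfinite} (which handles the collapse of the $\Gamma_\phi$-direction), and Proposition~\ref{prop:functoriality-CphiAsC} (which lets us transport $\alpha$ through the tensor product with $C^\phi(\Gamma_\phi)$). The main point is that the leftmost $S$-map in the diagram factors as
\[
\pi_0\otimes \alpha \;=\; (\pi_0\otimes 1)\circ (1\otimes \alpha) \;:\; \Tot_\bt\!\left(C^\phi(\Gamma_\phi,\cA)\right)^\natural \;\longrightarrow\; \Tot_\bt\!\left(C^\phi(\Gamma_\phi,\sC)\right)^\natural \;\longrightarrow\; \sC^{\Gamma_\phi,\natural}_\bt,
\]
so it suffices to treat each factor separately.

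For the first factor, Proposition~\ref{prop:functoriality-CphiAsC} applied to the quasi-isomorphism $\alpha:C^\phi_\bt(\cA)\to \sC_\bt$ of $\phi$-parachain complexes immediately yields that $\alpha\otimes 1$, and hence (by symmetry of the tensor product setup) $1\otimes\alpha$, induces a quasi-isomorphism of the associated total mixed complexes; applying the general principle that a mixed complex quasi-isomorphism gives an $S$-map quasi-isomorphism of cyclic complexes (recalled in Section~\ref{sec:background}), we get that $1\otimes\alpha$ is a quasi-isomorphism of cyclic complexes. For the second factor, part~(2) of Proposition~\ref{prop:quasi-isom-Gfinite}, applied to the $\phi$-parachain complex $\sC$, shows that $\pi_0\otimes 1$ is an $S$-map possessing the explicit right-inverse and $S$-homotopy left-inverse $\iota_{\Gamma_\phi}^\natural$, hence a quasi-isomorphism. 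Composing the two factors gives part~(1) of the theorem; combined with the chain of quasi-isomorphisms from Proposition~\ref{prop:quasi-isomorphism-CphiGA-CAGphi} we obtain the full four-term zig-zag.

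To produce an explicit quasi-inverse (respectively homotopy inverse) of $\pi_0\otimes\alpha$ from a quasi-inverse (respectively homotopy inverse) $\beta$ of $\alpha$, I would invoke Lemma~\ref{lem:homotopy-lemma}: applied with $\beta$ in place of $g$ and tensored with the $\phi$-paracyclic $k\Gamma_\phi$-module $C^\phi(\Gamma_\phi)$, it yields that $1\otimes\beta$ is an explicit $S$-homotopy inverse of $1\otimes\alpha$. Composing with the $S$-homotopy left-inverse $\iota_{\Gamma_\phi}^\natural$ of $\pi_0\otimes 1$ given by Proposition~\ref{prop:quasi-isom-Gfinite}, the composite $(1\otimes\beta)\iota_{\Gamma_\phi}^\natural$ is then a (quasi-)inverse of $(\pi_0\otimes 1)(1\otimes\alpha)=\pi_0\otimes\alpha$, as claimed.

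Part~(2) is automatic: each of the $S$-maps in the diagram extends to a chain map of the associated periodic cyclic complexes (Section~\ref{sec:background}), and being an $S$-map quasi-isomorphism guarantees a quasi-isomorphism at the periodic level; the same applies to the constructed homotopy inverses. Part~(3) then follows by passing to homology. The main technical point to keep an eye on is bookkeeping rather than real obstruction: one must check that the identification $\Tot_\bt(C(k)\otimes_{\Gamma_\phi}\sC)\cong \sC^{\Gamma_\phi}_\bt$ used tacitly in Proposition~\ref{prop:quasi-isom-Gfinite} is compatible with the $S$-module structure, and that on $\sC^{\Gamma_\phi}$ one indeed has $bB+Bb=1-\phi^{-1}=0$ so that this is a bona fide mixed complex and its cyclic/periodic complexes make sense. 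Once this identification is in place, every arrow in the theorem is an \emph{explicit} formula built out of $\pi_0$, $\iota_{\Gamma_\phi}$, $\shuffle$, $\AW$, $\mu_\phi$, and $\alpha,\beta$, so no further homological input is needed.
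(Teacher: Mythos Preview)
Your proposal is correct and follows essentially the same route as the paper: the theorem is stated immediately after Proposition~\ref{prop:quasi-isom-Gfinite} with the one-line justification ``Combining this with Proposition~\ref{prop:functoriality-CphiAsC} and Proposition~\ref{prop:quasi-isomorphism-CphiGA-CAGphi} we arrive at the following statement,'' and your factorization $\pi_0\otimes\alpha=(\pi_0\otimes 1)\circ(1\otimes\alpha)$ together with the separate treatment of each factor is exactly the intended unpacking of that sentence.

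One small caution on the homotopy-inverse clause: Lemma~\ref{lem:homotopy-lemma} as stated requires the target parachain complex to have zero $B$-differential, which $\sC$ need not have, so your direct invocation of it for $1\otimes\beta$ is slightly loose. In the paper's architecture the lemma is applied only to $\pi_0$ and $\iota_\Gamma$ (where the target $C(k)$ does have $B=0$) to obtain Proposition~\ref{prop:quasi-isom-Gfinite}; the passage from a homotopy inverse $\beta$ of $\alpha$ to an $S$-homotopy inverse $1\otimes\beta$ then rests on the more general perturbation statement alluded to in the Remark following Lemma~\ref{lem:homotopy-lemma} (or, for the quasi-inverse case, simply on Proposition~\ref{prop:functoriality-CphiAsC}). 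This does not affect the correctness of your overall argument, but you should cite the appropriate version when you fill in that step.
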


\begin{remark}
 The isomorphisms of the 3rd part are well known. For instance, they can be deduced from the spectral sequence of Feigin-Tsygan~\cite{FT:LNM87} (see also~\cite{GJ:Crelle93}). The novelty is that these isomorphisms follow from explicit quasi-isomorphisms. 
 \end{remark}

\begin{remark}
 The zeroth degree part of the $S$-map $(\pi_0\otimes \alpha) \AW^\natural$ is $(\pi_0\otimes \alpha) \AW$ and is given by
 \begin{equation*}
(\pi_0\otimes \alpha) \AW\left( (\psi_0,\ldots, \psi_m)\otimes_{\Gamma_\phi}(a^0\otimes \cdots \otimes a^m)\right) = 
\nu_{\Gamma_\phi}\alpha (a^0\otimes \cdots \otimes a^m),
\end{equation*}
where $\nu_{\Gamma_\phi}:\sC_\bt \rightarrow \sC^{\Gamma_\phi}$ is given by the average over the action of $\Gamma_\phi$.
The zeroth degree part of the $S$-map $\mu_\phi\shuffle^\natural\iota_{\Gamma_\phi}^\natural$ is equal to $\mu_\phi\shuffle\iota_{\Gamma_\phi}$, and so it is given by
\begin{equation*}
\mu_\phi\shuffle\iota_{\Gamma_\phi}(a^0\otimes \cdots \otimes a^m)= a^0u_\phi\otimes a^1 \otimes \cdots \otimes a^m, \qquad a^j \in \cA.
\end{equation*}
This provides us with a quasi-isomorphism from the twisted Hochschild complex of $C^\phi(\cA)^{\Gamma_\phi}$ to the Hochschild complex of $C(\cA_\Gamma)_{[\phi]}$ 
(see also~\cite{Da:JNCG13}).  
\end{remark}
 
\section{The Cyclic Module $C(\cA_\Gamma)_{[\phi]}$ (Finite Order Case)}\label{sec:finite-order} 
In this section, we look at the case where $\phi$ has finite order.  Let us first assume that $\phi$ is a central element of $\Gamma$ of order $r$, $r\in \N$. We shall relate $C^\phi(\Gamma)$ to the mixed complex 
$C^\flat(\Gamma)=(C(\Gamma),\partial,0)$. In what follows given a $\phi$-invariant mixed complex of $k\Gamma$-modules $\sC=(\sC_\bt, b,B)$, we shall denote by $C^\flat(\Gamma,\sC)$ the mixed bicomplex of $k$-modules $C^\flat(\Gamma)\otimes_\Gamma \sC$. More generally, if $\sC=(\sC_\bt, b,B)$ is a $\phi$-parachain complex, then we have a $\phi$-invariant subcomplex $\sC^\phi=(\sC_\bt^\phi, b,B)$, where $\sC^\phi_m$ is the $\phi$-invariant submodule of $\sC_m$. Note that $\sC^\phi$ is actually a mixed complex, since $bB+Bb=1-\phi^{-1}=0$ on $\sC^\phi_\bt$. This allows us to form the mixed bicomplex  $C^\flat(\Gamma,\sC^\phi)$. 

Bearing this in mind, let $\nu_\phi: C_\bt(\Gamma)\rightarrow C_\bt(\Gamma)$ be the $k\Gamma$-module map defined by
\begin{equation}
\nu_\phi(\psi_0,\ldots, \psi_m)= \frac{1}{r^{m+1}}\sum_{0\leq j \leq m} \sum_{0\leq \ell_j \leq r-1}(\phi^{\ell_0}\psi_0,\ldots, \phi^{\ell_m}\psi_m), \qquad \psi_j\in \Gamma. 
\end{equation}
In other words $\nu_\phi$ is the projection onto the $\phi$-invariant submodule $((k\Gamma)^\phi)^{\otimes (m+1)} \subset (k\Gamma)^{\otimes (m+1)}\simeq C_m(\Gamma)$. When $\phi=1$ this is just the identity map. 
We also let $\varepsilon : C_\bt(\Gamma) \rightarrow C_\bt(\Gamma)$ be the antisymmetrization map, 
\begin{equation}
\varepsilon(\psi_0,\ldots, \psi_m) = \frac{1}{(m+1)!} \sum_{\sigma \in \fS_m} (\psi_{\sigma^{-1}(0)}, \ldots, \psi_{\sigma^{-1}(m)}), \qquad \psi_j\in \Gamma,
\label{eq:antisymmetrization-map}
\end{equation}
 where $\fS_m$ is the group of permutations of $\{0,\ldots, m\}$. 
 
 As $\nu_\phi$ and $\varepsilon$ both are projections and commute with each other, the composition $\varepsilon \nu_\phi$ is a projection as well. It can be checked that $\varepsilon \nu_\phi:C_\bt^\phi(\Gamma) \rightarrow C^\flat(\Gamma)$ is a map of parachain complexes and the induced ordinary chain map $\varepsilon \nu_\phi:(C_\bt(\Gamma),\partial) \rightarrow (C_\bt(\Gamma),\partial)$ is chain homotopic to the identity (compare~\cite{Bu:CMH85, Ma:BCP86}). Since this is a projection, this means that we have a homotopy involution. In addition, we observe, that given any $\phi$-parachain complex, the map $(\varepsilon \nu_\phi)\otimes 1:
 C_{\bt}(\Gamma)\otimes_\Gamma \sC_\bt \rightarrow C_{\bt}(\Gamma)\otimes_\Gamma \sC_\bt $ maps to $C_\bt(\Gamma)\otimes \sC^\phi_\bt$, so that we obtain a parachain bicomplex map $(\varepsilon \nu_\phi)\otimes 1: C^\phi_{\bt,\bt}(\Gamma,\sC)\rightarrow C^\flat_{\bt,\bt}(\Gamma, \sC^\phi)$. Therefore, by using 
 Lemma~\ref{lem:homotopy-lemma} we 
 obtain the following result.

\begin{proposition}\label{prop:quasi-isomCphiGsC-finite-order}
Suppose that $\phi$ is a central element of $\Gamma$ of finite order. 
\begin{enumerate}
\item[(1)] The $S$-map $\varepsilon \nu_\phi:C_\bt^\phi(\Gamma)^\natural \rightarrow C^\flat_\bt(\Gamma)^\natural$ has an explicit $S$-homotopy inverse  
$(\varepsilon \nu_\phi)^\flat : C^\flat_\bt(\Gamma)^\natural  \rightarrow   C^\phi_\phi(\Gamma,\sC)^\natural$, which is an $S$-map whose zeroth degree component is 
$\varepsilon \nu_\phi$.  

\item[(2)] For any $\phi$-parachain complex $\sC$, the $S$-maps $(\varepsilon \nu_\phi)\otimes 1: \Tot_\bt(C^\phi(\Gamma,\sC))^\natural \rightarrow \Tot_\bt(C^\flat(\Gamma,\sC^\phi))^\natural$  and $(\varepsilon \nu_\phi)^\flat \otimes 1:  \Tot_\bt(C^\flat(\Gamma,\sC^\phi))^\natural  \rightarrow   \Tot_\bt(C^\phi(\Gamma,\sC))^\natural$ are $S$-homotopy inverses. 
\end{enumerate}
\end{proposition}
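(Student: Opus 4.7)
The plan is to apply the perturbation Lemma~\ref{lem:homotopy-lemma} to the parachain complex map $\varepsilon \nu_\phi: C^\phi_\bt(\Gamma) \to C^\flat_\bt(\Gamma)$. The two hypotheses that need checking are (i) that $\varepsilon \nu_\phi$ is indeed a parachain complex map to the zero-$B$ target $C^\flat_\bt(\Gamma)$, and (ii) that the underlying chain map $(C_\bt(\Gamma), \partial) \to (C_\bt(\Gamma), \partial)$ admits a homotopy inverse. For (i), commutation with $\partial$ is immediate from $k\Gamma$-equivariance of both $\varepsilon$ and $\nu_\phi$, while the identity $\varepsilon \nu_\phi \circ B = 0$ is a routine consequence of the Connes formula $B = (1 - \tau) s' N$ combined with the fact that the antisymmetrization $\varepsilon$ annihilates any chain factoring through the extra degeneracy $s'$.

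For (ii), I observe that $\varepsilon$ and $\nu_\phi$ are commuting idempotents (both well-defined because $\Q \subset k$ and $\phi$ has finite order), so the composite $\varepsilon \nu_\phi$ is itself an idempotent endomorphism. It therefore suffices to produce a $\Gamma$-equivariant chain homotopy between $\varepsilon \nu_\phi$ and the identity on $(C_\bt(\Gamma), \partial)$: idempotency then ensures that $\varepsilon \nu_\phi$ serves as its own homotopy inverse. I would assemble this chain homotopy from two pieces: the classical normalization homotopy between the antisymmetrization $\varepsilon$ and the identity on the bar complex, and a Marciniak-type chain contraction relating $\nu_\phi$ to the identity, which is available precisely because $r = |\phi|$ is invertible in $k$; this last ingredient goes back to the computation of the cyclic homology of group rings in~\cite{Bu:CMH85, Ma:BCP86}. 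Both ingredient homotopies are $k\Gamma$-equivariant since $\phi$ is central and $\varepsilon$ merely permutes tensor factors without disturbing the left $\Gamma$-action. Once (i) and (ii) are in place, Part~(1) follows from Lemma~\ref{lem:homotopy-lemma}(1), which produces $(\varepsilon \nu_\phi)^\flat$ and identifies its zeroth degree component with $\varepsilon \nu_\phi$.

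For Part~(2), Lemma~\ref{lem:homotopy-lemma}(2) applies with the tensor product bifunctor $\otimes_\Gamma: \Mod(k\Gamma) \times \Mod(k\Gamma) \to \Mod(k)$, since every map and homotopy constructed in Part~(1) is $\Gamma$-equivariant. This yields $S$-homotopy inverse $S$-maps between $\Tot_\bt(C^\phi(\Gamma, \sC))^\natural$ and $\Tot_\bt(C^\flat(\Gamma) \otimes_\Gamma \sC)^\natural$. The final step is to identify the latter with $\Tot_\bt(C^\flat(\Gamma, \sC^\phi))^\natural$. This rests on the elementary observation that whenever $c \in C_\bt(\Gamma)$ is $\phi$-invariant, the relation $c \otimes_\Gamma x = (\phi c) \otimes_\Gamma (\phi x) = c \otimes_\Gamma (\phi x)$ inside $C^\flat(\Gamma) \otimes_\Gamma \sC$ allows one to replace $x$ by its $\phi$-average, which lies in $\sC^\phi$; since $\varepsilon \nu_\phi$ lands in $C_\bt(\Gamma)^\phi$, the $S$-homotopy equivalence descends canonically to $\Tot_\bt(C^\flat(\Gamma, \sC^\phi))^\natural$. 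The main obstacle is the explicit construction in step~(ii), namely checking that the concatenation of the $\varepsilon$-normalization and $\nu_\phi$-averaging homotopies remains $k\Gamma$-equivariant, which is precisely what is needed for the tensor-product descent in Part~(2) to go through.
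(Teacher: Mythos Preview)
Your proposal is correct and follows essentially the same route as the paper: verify that $\varepsilon\nu_\phi$ is a parachain complex map into the zero-$B$ target $C^\flat(\Gamma)$, observe that as an idempotent chain-homotopic to the identity (via the Burghelea--Marciniak homotopies) it is its own homotopy inverse, and then feed this into Lemma~\ref{lem:homotopy-lemma}. The paper records exactly these two checks in the paragraph preceding the proposition and invokes the same lemma; your identification of $C^\flat(\Gamma)\otimes_\Gamma\sC$ with $C^\flat(\Gamma,\sC^\phi)$ via $\phi$-averaging also matches the paper's one-line observation that $(\varepsilon\nu_\phi)\otimes 1$ lands in $C_\bt(\Gamma)\otimes\sC^\phi_\bt$. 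One small caveat: your justification that ``commutation with $\partial$ is immediate from $k\Gamma$-equivariance'' conflates two different things---$k\Gamma$-equivariance is what you need for the $\otimes_\Gamma$ descent in Part~(2), whereas commutation with $\partial$ follows instead from the symmetry of the formula for $\partial$ under permutations of the $\psi_j$ (for $\varepsilon$) and from $\phi$ being central (for $\nu_\phi$).
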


Combining Proposition~\ref{prop:quasi-isomCphiGsC-finite-order} with Proposition~\ref{prop:functoriality-CphiAsC} 
and Proposition~\ref{prop:quasi-isomorphism-CphiGA-CAGphi} gives the following result. 

\begin{theorem} \label{thm:finite-order.HCAGphi}
Let $\phi \in \Gamma$ have finite order, and suppose we are given a quasi-isomorphism of $\phi$-parachain complexes $\alpha: C^\phi_\bt(\cA)\rightarrow \sC_\bt$.  \begin{enumerate}
\item[(1)] We have explicit quasi-isomorphisms of cyclic complexes, 
\begin{equation}
  \Tot_\bt\left(C^\flat(\Gamma_\phi, \sC^\phi)\right)^\natural  \xleftarrow{(\varepsilon \nu_\phi)\otimes \alpha}
 \Tot_\bt\left(C^\phi(\Gamma_\phi,\cA)\right)^\natural\xrightleftharpoons[\AW^\natural]{\shuffle^\natural} \Diag_\bt\left(C^\phi(\Gamma_\phi,\cA)\right)^\natural \xrightarrow{\mu_\phi} C_\bt(\cA_\Gamma)_{[\phi]}^\natural.
 \label{eq:finite-order.quasi-isom-CAGphi}
 \end{equation}
Moreover, if $\alpha$ has a quasi-inverse (resp., homotopy inverse) $\beta:  \sC_\bt \rightarrow C^\phi_\bt(\cA)$, then 
$ (\varepsilon \nu_\phi)^\flat \otimes \beta$ is a quasi-inverse (resp., homotopy inverse) of $(\varepsilon \nu_\phi)\otimes \alpha$. 

\item[(2)] There are analogous statements for the corresponding periodic cyclic complexes. 

\item[(3)] This provides us with isomorphisms,
\begin{equation*}
\HC(\cA_\Gamma)_{[\phi]}\simeq \HC_\bt( \Tot(C^\flat(\Gamma_\phi,\sC^\phi))), \qquad \HP(\cA_\Gamma)_{[\phi]}\simeq \HP_\bt( \Tot(C^\flat(\Gamma_\phi,\sC^\phi))). 
\end{equation*}
\end{enumerate}
\end{theorem}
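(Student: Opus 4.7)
The plan is to obtain the displayed chain of quasi-isomorphisms in~(\ref{eq:finite-order.quasi-isom-CAGphi}) by composing results already assembled in the paper, and then transfer the conclusion to the periodic cyclic complexes and to homology. I would first apply Proposition~\ref{prop:functoriality-CphiAsC} to the given quasi-isomorphism $\alpha:C^\phi_\bt(\cA)\to\sC_\bt$ of $\phi$-parachain complexes to promote it to a quasi-isomorphism $1\otimes\alpha:\Tot_\bt(C^\phi(\Gamma_\phi,\cA))\to\Tot_\bt(C^\phi(\Gamma_\phi,\sC))$ of mixed complexes, and hence of the associated cyclic complexes. I would then invoke Proposition~\ref{prop:quasi-isomCphiGsC-finite-order}(2) to get the $S$-homotopy equivalence $(\varepsilon\nu_\phi)\otimes 1:\Tot(C^\phi(\Gamma_\phi,\sC))^\natural\to\Tot(C^\flat(\Gamma_\phi,\sC^\phi))^\natural$. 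Composing these produces the first arrow $(\varepsilon\nu_\phi)\otimes\alpha$, and it is a quasi-isomorphism because both factors are. The remaining two arrows $\shuffle^\natural,\AW^\natural$ and $\mu_\phi$ are supplied verbatim by Proposition~\ref{prop:quasi-isomorphism-CphiGA-CAGphi}, which itself rests on Proposition~\ref{prop:bi-paracyclic-EZ} and on the Shapiro-lemma argument sketched in Section~\ref{sec:conj-classes}.

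For the inverse statement in~(1), given $\beta$ a quasi-inverse or homotopy inverse of $\alpha$, a second application of Proposition~\ref{prop:functoriality-CphiAsC} produces a quasi-isomorphism $1\otimes\beta:\Tot_\bt(C^\phi(\Gamma_\phi,\sC))\to\Tot_\bt(C^\phi(\Gamma_\phi,\cA))$; when $\beta$ is a homotopy inverse, the identity $1\otimes(\alpha\beta)=(1\otimes\alpha)(1\otimes\beta)$ together with the observation that tensoring a chain homotopy with the identity is again a chain homotopy shows that $1\otimes\beta$ is a homotopy inverse of $1\otimes\alpha$. Combining $1\otimes\beta$ with the explicit $S$-homotopy inverse $(\varepsilon\nu_\phi)^\flat\otimes 1$ from Proposition~\ref{prop:quasi-isomCphiGsC-finite-order}(1) then yields $(\varepsilon\nu_\phi)^\flat\otimes\beta$ as a quasi-inverse (resp.\ homotopy inverse) of $(\varepsilon\nu_\phi)\otimes\alpha$.

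For part~(2), I observe that every arrow in~(\ref{eq:finite-order.quasi-isom-CAGphi}) is an $S$-map, not merely a chain map: this is built into Propositions~\ref{prop:quasi-isomCphiGsC-finite-order} and~\ref{prop:bi-paracyclic-EZ}, while $\mu_\phi$ is by construction a map of cyclic $k$-modules. As recalled in Section~\ref{sec:background}, any $S$-map that is a quasi-isomorphism extends to a quasi-isomorphism of periodic cyclic complexes $C^\sharp$, so the entire chain descends to $C^\sharp$ with the inverses assembled exactly as above. Part~(3) is then immediate by taking homology.

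The main obstacle is not computational but structural bookkeeping: one must verify at each step that every map respects not just the $b$-differential but also the $S$-operator, so that the quasi-isomorphisms established at the cyclic level genuinely lift to the periodic level, and that tensor functoriality transports quasi-inverses through each stage. Once Propositions~\ref{prop:functoriality-CphiAsC}, \ref{prop:quasi-isomCphiGsC-finite-order}, \ref{prop:quasi-isomorphism-CphiGA-CAGphi}, and~\ref{prop:bi-paracyclic-EZ} are taken for granted, the theorem is a formal assembly — the substantive content is distributed across those four results.
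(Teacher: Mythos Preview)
Your proposal is correct and follows essentially the same route as the paper: the paper's proof is the single sentence ``Combining Proposition~\ref{prop:quasi-isomCphiGsC-finite-order} with Proposition~\ref{prop:functoriality-CphiAsC} and Proposition~\ref{prop:quasi-isomorphism-CphiGA-CAGphi} gives the following result'', and you have correctly unpacked this assembly, including the factorization $(\varepsilon\nu_\phi)\otimes\alpha = ((\varepsilon\nu_\phi)\otimes 1)\circ(1\otimes\alpha)$ and the corresponding inverse. One small slip: the $S$-homotopy inverse $(\varepsilon\nu_\phi)^\flat\otimes 1$ comes from part~(2) of Proposition~\ref{prop:quasi-isomCphiGsC-finite-order}, not part~(1).
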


\begin{remark}
 When $\cA=k$ the crossed-product algebra $\cA_\Gamma$ is just the group ring $k\Gamma$. In this case, we recover the descriptions of $\HC_\bt(k\Gamma)_{[\phi]}$ and $\HP_\bt(k\Gamma)_{[\phi]}$ by Burghelea~\cite{Bu:CMH85}.
\end{remark}

As mentioned in Section~\ref{sec:triangular-S-module}, the mixed bicomplex $C^\flat(\Gamma_\phi, \sC^\phi)$ gives rise to two triangular $S$-modules. The horizontal triangular $S$-module is the tensor product $C^\flat(\Gamma_\phi)^\natural\otimes_{\Gamma_\phi} \sC^\phi$. The vertical triangular $S$-module is the tensor product  $C^\flat(\Gamma_\phi)\otimes_{\Gamma_\phi} \sC^{\phi,\natural}$. Here $C^\flat(\Gamma_\phi)^\natural$ and $\sC^{\phi,\natural}$ are regarded as $S$-modules and the tensor products are defined as in Section~\ref{sec:triangular-S-module}. The total $S$-modules of these triangular $S$-modules agree with 
$ \Tot\left(C^\flat(\Gamma_\phi, \sC^\phi)\right)^\natural$. Combining this with Theorem~\ref{thm:finite-order.HCAGphi} we see that the spectral sequences associated with these total $S$-modules give rise to spectral sequences converging to $\HC_\bt(\cA_\Gamma)_{[\phi]}$. More specifically, we have the following statement.

\begin{corollary}\label{cor:finite-order.HCAGphi-spectral-sequences}
 Suppose that the assumptions of Theorem~\ref{thm:finite-order.HCAGphi} hold. 
 \begin{enumerate}
\item[(1)] The filtrations by rows and columns of $\Tot_\bt(C^\flat(\Gamma_\phi)^\natural\otimes_{\Gamma_\phi} \sC^\phi)=  \Tot_\bt\left(C^\flat(\Gamma_\phi, \sC^\phi)\right)^\natural$ and the quasi-isomorphisms~(\ref{eq:finite-order.quasi-isom-CAGphi}) give rise to spectral sequences, 
\begin{gather}
  {~}^{I}\! E^{2}_{p,q}=\HC_p\left(H_q(\Gamma_\phi, \sC^\phi)\right) \Longrightarrow 
        \HC_{p+q}( \cA_{\Gamma})_{[\phi]},  
        \label{eq:spectral-sequence-finite-order1}
        \\
  {~}^{I\!I}\! E^{2}_{p,q}=H_p(\Gamma_\phi, \HC_q(\sC^\phi)) \Longrightarrow 
        \HC_{p+q}( \cA_{\Gamma})_{[\phi]}. 
        \label{eq:spectral-sequence-finite-order2}       
\end{gather}
Here $H_q(\Gamma_\phi, \sC^\phi)$ is the mixed complex $(H_q(\Gamma_\phi, \sC^\phi), b,B)$.

\item[(2)] The filtration by columns of $\Tot_\bt(C^\flat(\Gamma_\phi)^\natural\otimes_{\Gamma_\phi} \sC^\phi)=  \Tot_\bt\left(C^\flat(\Gamma_\phi, \sC^\phi)\right)^\natural$ and the quasi-isomorphisms~(\ref{eq:finite-order.quasi-isom-CAGphi}) give rise to a spectral sequence, 
        \begin{equation}
{~}^{I\!I\!I}\! E^{2}_{p,q} \Longrightarrow 
        \HC_{p+q}( \cA_{\Gamma})_{[\phi]}, \quad \text{where}\ {~}^{I}\! E^{2}_{p,q}=H_p(\Gamma_\phi, H_q(\sC^\phi))\oplus H_p(\Gamma_\phi, H_q(\sC^\phi)) \oplus \cdots,  
                \label{eq:spectral-sequence-finite-order3}
\end{equation}
and $H_\bt(\sC^\phi)$ is the ordinary homology of $\sC^\phi$. 
\end{enumerate}
\end{corollary}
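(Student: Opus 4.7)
The plan is to derive the three spectral sequences as instances of the triangular $S$-module formalism of Section~\ref{sec:triangular-S-module} applied to the mixed bicomplex $C^\flat(\Gamma_\phi, \sC^\phi)$, and then to transport the abutment along the quasi-isomorphisms~(\ref{eq:finite-order.quasi-isom-CAGphi}) of Theorem~\ref{thm:finite-order.HCAGphi} to reach $\HC_\bt(\cA_\Gamma)_{[\phi]}$.

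First I would set up the filtrations. The mixed bicomplex $C^\flat(\Gamma_\phi, \sC^\phi)$ gives rise both to a horizontal triangular $S$-module $C^\flat(\Gamma_\phi)^\natural \otimes_{\Gamma_\phi} \sC^\phi$ and to a vertical triangular $S$-module $C^\flat(\Gamma_\phi) \otimes_{\Gamma_\phi} \sC^{\phi,\natural}$, both having total $S$-module $\Tot(C^\flat(\Gamma_\phi, \sC^\phi))^\natural$. Because $\wB = 0$ on $C^\flat(\Gamma_\phi)$, the total differential takes the simple form $d^\dagger = \partial + (-1)^p (b + BS)$, and no summand of $d^\dagger$ strictly increases the horizontal degree. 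Consequently both the filtration by rows and the filtration by columns of the vertical triangular $S$-module are compatible with $d^\dagger$; together with the filtration by columns of the horizontal triangular $S$-module (Proposition~\ref{prop:triang-S-mod.horizontal-SC}), this gives three available filtrations, from which the three spectral sequences in the corollary will arise.

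Next I would run through the three $E^2$ identifications. For~(I), filter the vertical triangular $S$-module by rows; the $E^0$-differential reduces to $\partial$, and by exactness of the functor $C_p(\Gamma_\phi) \otimes_{\Gamma_\phi} -$ (recalled at the end of Section~\ref{sec:CphiGA}), one finds $E^1 = H_\bt(\Gamma_\phi, \sC^{\phi,\natural}_\bt)$. The induced $E^1$-differential $d = b + BS$ then yields column-by-column the cyclic homology of the mixed complex $(H_\bt(\Gamma_\phi, \sC^\phi), b, B)$, producing the $E^2$-term of~(\ref{eq:spectral-sequence-finite-order1}). For~(II), filter the vertical triangular $S$-module by columns (valid thanks to $\wB = 0$); the $E^0$-differential is $d = b + BS$, so again by exactness $E^1 = C_\bt(\Gamma_\phi) \otimes_{\Gamma_\phi} \HC_\bt(\sC^\phi)$, and the $E^1$-differential $\partial$ produces $E^2 = H_\bt(\Gamma_\phi, \HC_\bt(\sC^\phi))$ as in~(\ref{eq:spectral-sequence-finite-order2}). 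For~(III), apply Proposition~\ref{prop:triang-S-mod.horizontal-SC} to the horizontal triangular $S$-module: the $E^0$-differential is the vertical $b$ alone (since $\wbd = \partial$ and $BS$ both lower the column index), giving $E^1 = C^\flat(\Gamma_\phi)^\natural_\bt \otimes_{\Gamma_\phi} H_\bt(\sC^\phi)$; decomposing $C^\flat(\Gamma_\phi)^\natural_p = \bigoplus_{j\geq 0} C_{p-2j}(\Gamma_\phi)$ and passing to $E^2$ via $\partial$ then gives the direct-sum expression of~(\ref{eq:spectral-sequence-finite-order3}).

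The main technical point is verifying the compatibility of each filtration with $d^\dagger$ — automatic for~(I) and~(III) but genuinely requiring $\wB = 0$ for~(II) — and correctly identifying the $E^2$-pages, where the exactness of $C_p(\Gamma_\phi) \otimes_{\Gamma_\phi} -$ is essential for commuting group homology past the vertical $(b, B)$-structure. Convergence in all three cases is supplied by the triangular $S$-module spectral sequence, and the abutment is identified with $\HC_\bt(\cA_\Gamma)_{[\phi]}$ via the quasi-isomorphisms~(\ref{eq:finite-order.quasi-isom-CAGphi}).
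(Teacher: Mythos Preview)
Your proposal is correct and follows essentially the same route as the paper. The paper itself does not give a detailed proof of this corollary; the argument is contained in the paragraph preceding the statement (identifying the two triangular $S$-modules whose total $S$-module is $\Tot(C^\flat(\Gamma_\phi,\sC^\phi))^\natural$) together with the remarks that follow, and your write-up fills in exactly those details. In particular, your observation that the column filtration needed for~(II) is only compatible with $d^\dagger$ because $\wB=0$ is precisely the point made in the paper's remark identifying~(\ref{eq:spectral-sequence-finite-order2}) with the Feigin--Tsygan spectral sequence, where $\Tot(C^\flat(\Gamma_\phi)\otimes_{\Gamma_\phi}\sC^{\phi,\natural})$ is recognized as the total complex of an honest bicomplex. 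Note also that, although the corollary as stated refers to the horizontal triangular $S$-module $C^\flat(\Gamma_\phi)^\natural\otimes_{\Gamma_\phi}\sC^\phi$ in part~(1), the subsequent remark shows that~(II) is really obtained from the vertical one $C^\flat(\Gamma_\phi)\otimes_{\Gamma_\phi}\sC^{\phi,\natural}$, exactly as you set it up; since both have the same underlying total complex this is only a labeling issue.
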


\begin{remark}
 The spectral sequence~(\ref{eq:spectral-sequence-finite-order1}) is a refinement of the spectral sequence of Getzler-Jones~\cite[Theorem~4.5]{GJ:Crelle93}. 
\end{remark}

\begin{remark}
 As a chain complex $\Tot(C(\Gamma_\phi)\otimes_{\Gamma_\phi} \sC^{\phi,\natural})$ is the total complex of the double complex given by the tensor product of the chain complexes $(C(\Gamma_\phi)_\bt,\partial)$ and $\sC^{\phi, \natural}=(\sC_\bt^{\phi, \natural},b+BS)$. Therefore, its filtration by columns  too is a filtration of chain complexes. It gives rise to the spectral sequence~(\ref{eq:spectral-sequence-finite-order2}). When $\sC=C^\phi(\cA)$ and $\alpha=\op{id}$ we recover the spectral sequence of Feigin-Tsygan~\cite[Theorem~4.1.1]{FT:LNM87}.
\end{remark}

\begin{remark}
  The spectral sequence~(\ref{eq:spectral-sequence-finite-order3}) seems to be new. Note that its $E_2$-term does not involve the cyclic structure of 
  $C(\cA_\Gamma)_{[\phi]}$.
\end{remark}

\section{The Cyclic Module $C(\cA_\Gamma)_{[\phi]}$ (Infinite Order Case)}\label{sec:infinite-order}

In this section, we compute the cyclic homology of $C(\cA_\Gamma)_{[\phi]}$ when $\phi$ has infinite order. 

\subsection{From $C^\phi(\Gamma,\sC)$ to $C^\sigma(\OG,\sC)$} 
Suppose that $\phi$ is a central element of $\Gamma$ with infinite order. Set $\overline{\Gamma}=\Gamma / \brak\phi$, where $\brak\phi$ is the subgroup generated by $\phi$. The canonical projection $\wpi:\Gamma \rightarrow \overline{\Gamma}$ gives rise to a $(\Gamma, \overline{\Gamma})$-equivariant paracyclic module map $\wpi: C^\phi_\bt(\Gamma)\rightarrow C_\bt(\overline{\Gamma})$. 
Composing this map with the natural projection $\pi^\natural: C_\bt(\overline{\Gamma})^\natural\rightarrow C_\bt(\overline{\Gamma})$ and the antisymmetrization map $\varepsilon:C_\bt(\overline{\Gamma}) \rightarrow C_\bt(\overline{\Gamma})$ in~(\ref{eq:antisymmetrization-map}), we then obtain a $(\Gamma, \overline{\Gamma})$-equivariant chain map $\wepsn: C^\phi_\bt(\Gamma)^\natural \rightarrow C_\bt(\overline{\Gamma})$. If $\sM$ is a $\phi$-invariant $k\Gamma$-module, then the action of $\Gamma$ on $\sM$ descends to an action of $\overline{\Gamma}$, and so we obtain a chain map $\wepsn\otimes 1:  C_\bt^\phi(\Gamma,\sM)^\natural \rightarrow C_\bt(\overline{\Gamma},\sM)$. 

Let $C^\phi(\Gamma)^\lambda=(C^\phi_\bt(\Gamma)^\lambda, \partial)$ be the Connes complex of $C^\phi(\Gamma)$, where $C^\phi_\bt(\Gamma)^\lambda=C_\bt(\Gamma)/ \ran(1-\tau_\phi)$ with $\tau_\phi=(-1)^m t_\phi$ on $C_m(\Gamma)$. Note that $\phi$ acts like the identity on $C^\phi(\Gamma)^\lambda$, and so the action of $\Gamma$ descends to an action of $\overline{\Gamma}$.  The projection $\pi^\lambda: C^\phi_\bt(\Gamma)^\natural \rightarrow C^\phi_\bt(\Gamma)^\lambda$ is a chain map. Furthermore, the projection $\wpi: C_\bt(\Gamma)\rightarrow C_\bt(\OG)$ descends to a chain map $\wpi: C_\bt^\phi(\Gamma)^\lambda \rightarrow C_\bt(\overline{\Gamma})$ in such a way that $\wepsn=\varepsilon \wpi \pi^\lambda$.  

It was shown by Marciniak~\cite{Ma:BCP86} that the chain complex $C^\phi(\Gamma)^\lambda$ gives rise to a projective resolution of the trivial $k\overline{\Gamma}$-module $k$ (see also~\cite{Bu:CMH85}). It then follows that the chain map $\varepsilon \wpi$ has a homotopy inverse $\gamma: C_\bt(\overline{\Gamma})\rightarrow C^\phi_\bt(\Gamma)^\lambda$. Moreover, it follows from results of Kassel~\cite{Ka:Crelle90} that there is an explicit graded $k\Gamma$-module map $\iota: C_\bt(\Gamma) \rightarrow C_\bt(\Gamma)^\natural$ such that, for every $\phi$-invariant $k\Gamma$-module $\sM$, the $k$-module map $\iota\otimes 1:  C_\bt(\Gamma,\sM) \rightarrow C_\bt^\phi(\Gamma,\sM)^\natural$ descends to a chain map $(\iota \otimes 1)^\lambda:  C_\bt(\Gamma,\sM)^\lambda \rightarrow C_\bt^\phi(\Gamma,\sM)^\natural$ which is a homotopy inverse of $\pi^\lambda \otimes 1$. Therefore, we arrive at the following statement.

\begin{lemma}\label{lem:homotopy-inverses-sM}
 For any $\phi$-invariant $k\Gamma$-module $\sM$, the chain maps $\wepsn\otimes 1:  C_\bt^\phi(\Gamma,\sM)^\natural \rightarrow C_\bt(\overline{\Gamma},\sM)$ and 
 $(\iota \otimes 1)^\lambda(\gamma \otimes 1): C_\bt(\overline{\Gamma},\sM) \rightarrow C_\bt^\phi(\Gamma,\sM)^\natural $ are homotopy inverses. In particular, this provides us with an isomorphism $\HC_\bt(C^\phi(\Gamma, \sM))\simeq H_\bt(\overline{\Gamma},\sM)$. 
\end{lemma}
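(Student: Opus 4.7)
My plan is to deploy the factorization $\wepsn=\varepsilon\wpi\pi^\lambda$ supplied in the paragraph preceding the lemma, and to establish the homotopy equivalence one piece at a time. Tensoring over $k\Gamma$ with $\sM$, and using that the $\Gamma$-action on $\sM$ descends to an action of $\OG$ thanks to $\phi$-invariance, the map in question factors as
\begin{equation*}
\wepsn\otimes 1 \;=\; (\varepsilon\wpi\otimes 1)\circ(\pi^\lambda\otimes 1)\colon C_\bt^\phi(\Gamma,\sM)^\natural \longrightarrow C_\bt^\phi(\Gamma,\sM)^\lambda \longrightarrow C_\bt(\OG,\sM),
\end{equation*}
so the candidate homotopy inverse $(\iota\otimes 1)^\lambda(\gamma\otimes 1)$ should be read as reversing this factorization stage by stage.

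The inner arrow $\pi^\lambda\otimes 1$ already has the advertised two-sided homotopy inverse $(\iota\otimes 1)^\lambda$ by Kassel's perturbation-theoretic construction recalled immediately above the statement; the $k\Gamma$-linearity of $\iota$ together with the $\phi$-invariance of $\sM$ is what allows it to descend to the Connes quotient after tensoring. For the outer arrow $\varepsilon\wpi\otimes 1$, I invoke Marciniak's theorem, which identifies $C^\phi(\Gamma)^\lambda$ with a projective resolution of the trivial $k\OG$-module $k$; since $C(\OG)$ is the standard such resolution, the $k\OG$-linear chain map $\varepsilon\wpi$ lifts $\op{id}_k$, and the comparison theorem for projective resolutions produces an explicit $k\OG$-linear homotopy inverse $\gamma$ together with the required chain homotopies on both sides. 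Because $\sM$ is $\phi$-invariant, tensoring with $\sM$ over $k\Gamma$ coincides with tensoring over $k\OG$ on every complex in sight (all of them carry trivial $\phi$-action), so the $k\OG$-linear homotopy equivalence descends to a $k$-linear homotopy equivalence between $C_\bt^\phi(\Gamma,\sM)^\lambda$ and $C_\bt(\OG,\sM)$ with inverse $\gamma\otimes 1$.

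Composing the two equivalences yields the desired pair of homotopy inverses between $C_\bt^\phi(\Gamma,\sM)^\natural$ and $C_\bt(\OG,\sM)$, and the isomorphism $\HC_\bt(C^\phi(\Gamma,\sM))\simeq H_\bt(\OG,\sM)$ follows by passing to homology, since the left-hand complex computes $\HC_\bt(C^\phi(\Gamma,\sM))$ under $b+BS$ and the right-hand complex computes $H_\bt(\OG,\sM)$ under $\partial$. The only delicate point, and the one I would check first, is that the $k\Gamma$-equivariant chain homotopies supplied by Kassel and by the comparison theorem survive the descent to $\Mod(k)$ via $\otimes_\Gamma\sM$ as honest chain homotopies; this is exactly where the $\phi$-invariance of $\sM$ is genuinely used, through the compatibility $C^\phi(\Gamma)^\lambda\otimes_\Gamma\sM = C^\phi(\Gamma)^\lambda\otimes_{\OG}\sM$.
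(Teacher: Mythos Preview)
Your proposal is correct and follows exactly the route the paper takes: the discussion immediately preceding the lemma already records the factorization $\wepsn=\varepsilon\wpi\pi^\lambda$, the Kassel homotopy inverse $(\iota\otimes 1)^\lambda$ for $\pi^\lambda\otimes 1$, and the Marciniak projective-resolution argument producing $\gamma$ as a homotopy inverse for $\varepsilon\wpi$, so the lemma is simply the composition of these two equivalences after tensoring with $\sM$. Your added remark that $\phi$-invariance of $\sM$ is what makes $-\otimes_\Gamma\sM$ agree with $-\otimes_{\OG}\sM$ on the complexes carrying trivial $\phi$-action is the right justification for why the $k\OG$-linear homotopies survive the descent.
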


The periodicity operator $S:C^\phi_\bt(\Gamma)\rightarrow C^\phi_{\bt-2}(\Gamma)$ is closely related to the cap product with the Euler class $e_\phi \in H^2(\overline{\Gamma},k)$ of the Abelian extension $1\rightarrow \brak\phi \rightarrow \overline{\Gamma} \rightarrow \Gamma \rightarrow 1$ (\emph{cf}.\ \cite{Bu:CMH85}). Let $u_\phi \in C^2(\overline{\Gamma},k)$ be any $2$-cocycle representing $e_\phi$. The cap product with $u_\phi$ then gives rise to a chain map $u_\phi \frown -: H_{\bt}(\overline{\Gamma})\rightarrow H_{\bt-2}(\overline{\Gamma})$.

\begin{lemma}[compare~\cite{Ji:KT95}] \label{lem:infinite-order.homotopy-S}
There is an explicit $(\Gamma, \overline{\Gamma})$-equivariant map $h_\phi: C_\bt^\phi(\Gamma)^\natural \rightarrow C_{\bt -1}(\overline{\Gamma})$ such that 
$\wepsn S- (u_\phi \frown -) \wepsn = \partial h_\phi + h_\phi(\partial +B_\phi S)$. 
 \end{lemma}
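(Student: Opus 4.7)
Both $\wepsn S$ and $(u_\phi \frown -) \wepsn$ are chain maps of degree $-2$ from the cyclic complex $(C_\bt^\phi(\Gamma)^\natural, \partial + B_\phi S)$ to $(C_\bt(\OG), \partial)$: the first because $S$ and $\wepsn$ are chain maps of $S$-modules, the second because $u_\phi$ is a cocycle and $\wepsn$ is a chain map. By Lemma~\ref{lem:homotopy-inverses-sM}, $\wepsn$ induces the isomorphism $\HC_\bt(C^\phi(\Gamma))\simeq H_\bt(\OG)$. Under this identification the classical Burghelea--Marciniak theorem matches the periodicity operator $S$ on the left with cap product by the Euler class $e_\phi$ on the right; hence the two maps agree on homology and are chain homotopic. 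The task is to produce an explicit, $(\Gamma,\OG)$-equivariant such homotopy $h_\phi$.

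My approach is to transfer the problem to the Connes complex $C^\phi(\Gamma)^\lambda$, where the Euler-class interpretation of $S$ is most transparent. Factoring $\wepsn = \varepsilon\wpi\pi^\lambda$ one has $\wepsn S = \varepsilon\wpi(\pi^\lambda S)$. By Marciniak's theorem, $C^\phi(\Gamma)^\lambda$ is a projective resolution of the trivial $k\OG$-module $k$; hence by the fundamental theorem of homological algebra any two $k\OG$-linear chain maps $C^\phi(\Gamma)^\lambda \to C_\bt(\OG)[-2]$ inducing the same map on $H_0$ are related by an explicit chain homotopy built inductively from a partial contraction of the resolution. Applied to $\varepsilon\wpi(\pi^\lambda S)(\iota\gamma)$ and $(u_\phi\frown -)\varepsilon\wpi\cdot(\iota\gamma)$ on $C^\phi(\Gamma)^\lambda$, this yields a homotopy $k_\phi$. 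One then pulls this back to $C^\phi(\Gamma)^\natural$ via $\pi^\lambda$ and uses Kassel's explicit identification of $\iota^\lambda\pi^\lambda$ with the identity up to $S$-homotopy, plus a correction term absorbing the $B_\phi S$ contribution in the differential, to assemble the desired $h_\phi$.

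The main obstacle is the explicit bookkeeping. Concretely, one fixes a set-theoretic section $\sigma : \OG \to \Gamma$ with $\sigma(1) = 1$ so that $\sigma(\bar\alpha)\sigma(\bar\beta) = \phi^{f(\bar\alpha,\bar\beta)}\sigma(\bar\alpha\bar\beta)$ for a function $f : \OG \times \OG \to \Z$ whose image $u_\phi$ in $k$ is a $2$-cocycle representing $e_\phi$. The formula for $h_\phi$ evaluated on a generator $(\psi_0,\ldots,\psi_m) \in C^\phi_m(\Gamma)^\natural$ should then take the form of a sum indexed by pairs of cyclic positions, with coefficients combining the cocycle $f$ evaluated on appropriate products of the $\wpi(\psi_\ell)$ with antisymmetrization signs; the Euler class enters precisely because $S$ engages $t_\phi^{m+1} = \phi^{-1}$ under the $B_\phi$-piece of the differential, and this is what the relation $\sigma(\bar\alpha)\sigma(\bar\beta) = \phi^{f(\bar\alpha,\bar\beta)}\sigma(\bar\alpha\bar\beta)$ converts into an explicit multiple of $u_\phi$. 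Matching all signs and $\phi$-twists so that both the cap product and the $B_\phi S$ contribution appear correctly, while preserving $(\Gamma,\OG)$-equivariance, is the main computational challenge; the comparison with Ji's construction in~\cite{Ji:KT95} should provide the template.
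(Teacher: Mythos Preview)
The paper does not supply a proof of this lemma; it is stated with only a reference to Ji~\cite{Ji:KT95}, so there is no argument in the paper to compare against directly. Your proposal is an attempt to reconstruct what such a proof should look like, and you correctly identify Ji's construction as the template.

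Your overall strategy is sound: recognize both $\wepsn S$ and $(u_\phi\frown-)\wepsn$ as $k\OG$-linear chain maps of degree~$-2$ into the free resolution $C_\bt(\OG)$, invoke Marciniak's result that $C^\phi(\Gamma)^\lambda$ is a projective $k\OG$-resolution of $k$, and use the comparison theorem for projective resolutions to produce an equivariant homotopy. This is indeed the standard route.

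That said, the middle paragraph contains type-checking problems. The expression ``$\varepsilon\wpi(\pi^\lambda S)(\iota\gamma)$ \ldots\ on $C^\phi(\Gamma)^\lambda$'' does not parse: $\iota^\lambda\gamma$ goes from $C_\bt(\OG)$ to $C_\bt^\phi(\Gamma)^\natural$, so the composite you write is a self-map of $C_\bt(\OG)$, not a map on $C^\phi(\Gamma)^\lambda$. More seriously, the periodicity operator $S$ does not obviously descend along $\pi^\lambda$ to an operator on $C^\phi(\Gamma)^\lambda$, so the passage through the Connes complex needs to be reorganized. A cleaner formulation is to work directly with the difference $\wepsn S - (u_\phi\frown-)\wepsn$ as a $k\OG$-linear map from the complex of free $k\OG$-modules $C^\phi_\bt(\Gamma)^\natural$ into the acyclic-in-positive-degrees complex $C_\bt(\OG)$, verify that in the bottom degree it lands in the augmentation ideal, and then build $h_\phi$ inductively by lifting along $\partial$ using freeness; equivariance is automatic because every step is $k\OG$-linear.

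Finally, while you correctly describe the shape of the expected formula (sums over cyclic positions with coefficients coming from the $\Z$-valued $2$-cocycle $f$ attached to the section $\sigma$), you do not actually write it down or verify the homotopy identity. Since the lemma asserts an \emph{explicit} $h_\phi$, the proof is not complete until that formula is exhibited and checked; this is exactly the content supplied in~\cite{Ji:KT95}, to which the paper defers.
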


It immediately follows from Lemma~\ref{lem:infinite-order.homotopy-S} that, for any $\phi$-invariant $k\Gamma$-module, under the isomorphism   
$\HC_\bt(C^\phi(\Gamma, \sM))\simeq H_\bt(\overline{\Gamma},\sM)$ provided by Lemma~\ref{lem:homotopy-inverses-sM}, the periodicity operator of $\HC_\bt(C^\phi (\Gamma,\sM))$ is given by
 the cap product with the Euler class $e_\phi \in H^2(\overline{\Gamma},k)$. 

As the cap product with $u_\phi$ is a chain map, we obtain an $S$-module $C^\sigma(\OG):=(C_\bt(\OG), \partial, u_\phi\frown -)$. 
Given any $\phi$-invariant mixed complex $\sC=(\sC_\bt,b,B)$, we shall denote by $C^\sigma(\OG,\sC)$ the triangular $S$-module given by the tensor product 
$C^\sigma(\OG)\otimes_{\OG} \sC$ as defined in Section~\ref{sec:triangular-S-module}. This gives rise to the total $S$-module
 $\Tot(C^\sigma(\overline{\Gamma},\sC))= (\Tot_\bt(C^\sigma(\overline{\Gamma},\sC)), d^\dagger, u_\phi \frown -)$, where
 \begin{equation}
\Tot_m\left(C^\sigma(\overline{\Gamma},\sC)\right)=\bigoplus_{p+q=m} C_p(\OG, \sC_q), \qquad d^\dagger =\partial +(-1)^pb +(-1)B(u_\phi \frown -). 
\label{eq:infinite.TotCsGsC}
\end{equation}
We then obtain a chain map $\theta:\Tot_\bt(C^\phi(\Gamma,\sC))^\natural \rightarrow \Tot_\bt(C^\sigma(\overline{\Gamma},\sC))$ by letting
\begin{equation}
\theta=\wepsn \otimes 1 + (-1)^{p-1} (1\otimes B)(h_\phi \otimes 1) \qquad \text{on $C_{p,q}(\Gamma, \sC)$}. 
\label{eq:infinite-order.theta}
\end{equation}
By using Lemma~\ref{lem:homotopy-inverses-sM}, Lemma~\ref{lem:infinite-order.homotopy-S}, and Proposition~\ref{prop:triang-S-mod.horizontal-SC} we then obtain the following result.

\begin{proposition}\label{prop:infinite-order.quasi-isom-sC}
For any $\phi$-invariant mixed complex $\sC$, the chain map~(\ref{eq:infinite-order.theta}) is a quasi-isomorphism. Moreover, we have
\begin{equation*}
\theta S - (u_\phi \frown -)S= d^\dagger (h_\phi \otimes 1) + (h_\phi \otimes 1) d^\dagger,
\end{equation*}
where we have also denoted by $d^\dagger$ the differential of $\Tot(C^\phi(\Gamma,\sC))^\natural$.
\end{proposition}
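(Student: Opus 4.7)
The plan is to prove the quasi-isomorphism by comparing spectral sequences on the two sides, with the $E^2$-isomorphism coming from Lemma~\ref{lem:homotopy-inverses-sM}, and to establish the homotopy identity by a direct calculation built on Lemma~\ref{lem:infinite-order.homotopy-S}.

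First I would verify that $\theta$ is a chain map by unpacking $d^\dagger \theta - \theta d^\dagger$ on a generator in $C^\phi_{p,q}(\Gamma, \sC)$. The vertical $(b,B)$-differentials commute with $\wepsn \otimes 1$ and $h_\phi \otimes 1$ by construction, while the horizontal defect is precisely what Lemma~\ref{lem:infinite-order.homotopy-S} controls: the correction term $(-1)^{p-1}(1\otimes B)(h_\phi \otimes 1)$ built into the definition of $\theta$ is tailored so that the defect coming from $\wepsn S - (u_\phi \frown -)\wepsn = \partial h_\phi + h_\phi(\partial + B_\phi S)$ cancels against the vertical $B$-contribution on the target.

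Next, I would filter both sides by the horizontal degree $p$. The source $\Tot(C^\phi(\Gamma,\sC))^\natural$ is the total $S$-module of the horizontal triangular $S$-module $C^\phi(\Gamma,\sC)^\wsigma$, and the target $\Tot(C^\sigma(\OG,\sC))$ is by definition the total $S$-module of the triangular $S$-module $C^\sigma(\OG) \otimes_{\OG} \sC$. The chain map $\theta$ preserves both filtrations because the correction term strictly lowers $p$, and on the associated graded it reduces to $\wepsn \otimes 1$. Applying Proposition~\ref{prop:triang-S-mod.horizontal-SC} to both sides, and using the $k\OG$-freeness of $C_p(\OG)$ under the diagonal action (respectively the $k\Gamma$-freeness of $C^\phi_p(\Gamma)$), one finds that $E^2_{p,q}$ on the target equals $H_p(\OG, H_q(\sC))$, while on the source it equals $\HC_p(C^\phi(\Gamma, H_q(\sC)))$. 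Lemma~\ref{lem:homotopy-inverses-sM} applied to the $\phi$-invariant $k\Gamma$-module $\sM = H_q(\sC)$ identifies these two expressions via the map induced by $\wepsn \otimes 1$. Thus $\theta$ is an $E^2$-isomorphism, and since both filtrations are bounded below and exhaustive, the spectral sequence comparison theorem yields the quasi-isomorphism.

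For the homotopy formula, I would expand $d^\dagger (h_\phi \otimes 1) + (h_\phi \otimes 1) d^\dagger$ on a generator of $C^\phi_{p,q}(\Gamma, \sC)$ using the explicit formula for $d^\dagger$ on each side, and match it against $\theta S - (u_\phi \frown -) S$; after the sign bookkeeping the identity reduces to Lemma~\ref{lem:infinite-order.homotopy-S} evaluated on the appropriate components. The main technical obstacle throughout is tracking signs and the implicit $S$-component of the horizontal differential $\partial + B_\phi S$, but once this bookkeeping is settled the comparison argument is formal.
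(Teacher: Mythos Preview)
Your proposal is correct and follows essentially the same approach as the paper, which simply cites Lemma~\ref{lem:homotopy-inverses-sM}, Lemma~\ref{lem:infinite-order.homotopy-S}, and Proposition~\ref{prop:triang-S-mod.horizontal-SC} without spelling out the details. Your spectral-sequence comparison via the column filtration, with the $E^2$-isomorphism supplied by Lemma~\ref{lem:homotopy-inverses-sM} applied to $\sM=H_q(\sC)$, and the reduction of both the chain-map verification and the homotopy identity to Lemma~\ref{lem:infinite-order.homotopy-S}, is exactly the argument the paper has in mind.
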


\subsection{Good infinite order actions}
Given a $\phi$-parachain complex $\sC=(\sC_\bt,b,B)$, we can form its $\phi$-coinvariant complex $\sC_\phi=(\sC_{\phi, \bt},b,B)$, where $\sC_{\phi, m}=\sC_m/\ran(\phi-1)$. Note that $\sC_\phi$ is a $\phi$-invariant mixed complex. Moreover, the canonical projection $\pi: \sC_\bt \rightarrow \sC_{\phi, \bt}$ is a parachain complex map.

\begin{definition}
  We shall say that a $\phi$-parachain complex $\sC$ is \emph{good} when the canonical projection $\pi: \sC_\bt \rightarrow \sC_{\phi, \bt}$ is a quasi-isomorphism.
\end{definition}

It can be shown that $\sC$ is good if and only if it is quasi-isomorphic to a $\phi$-invariant mixed complex. For instance, $\sC$ is good when we have a splitting $\sC_\bt =\sC^\phi_\bt \oplus \ran (\phi -1)$, where $\sC^\phi=\ker (\phi -1)$. In that case the inclusion of $\sC^\phi_\bt$ into $\sC_\bt$  is a quasi-isomorphism (see~\cite[Proposition~2.1]{HK:KT05}). We refer to~\cite[Example 3.10]{HK:JKT09} for an example of $\phi$-paracyclic module which does not give rise to a good $\phi$-parachain complex.

\begin{definition}\label{eq:infinite.good-action}
  Let $\phi \in \Gamma$. We shall say that the action of $\phi$ on $\cA$ is \emph{good} when $C^\phi(\cA)$ is a good $\phi$-parachain complex. 
\end{definition}

\begin{remark}
  We refer to Section~\ref{sec:manifolds} and Section~\ref{sec:varieties} for examples of good actions in the context of \emph{clean} actions on manifolds and smooth varieties.
 \end{remark}

Combining Proposition~\ref{prop:infinite-order.quasi-isom-sC} with Proposition~\ref{prop:functoriality-CphiAsC} and Proposition~\ref{prop:quasi-isomorphism-CphiGA-CAGphi} gives the following result. 

\begin{theorem}\label{thm:infinite.HCAGphi}
 Let $\phi \in \Gamma$ have infinite order, and set $\overline{\Gamma}_\phi = \Gamma_\phi/ \brak\phi$. Assume that the action of $\phi$ on $\cA$ is good, and let $\alpha: C^\phi_\bt(\cA)\rightarrow \sC_\bt$ be a parachain map and quasi-isomorphism, where $\sC$ is a $\phi$-invariant mixed complex. 
 \begin{enumerate}
\item[(1)] We have the following quasi-isomorphisms of chain complexes, 
  \begin{equation}
 \Tot_\bt (C^\sigma(\overline{\Gamma}_\phi ,\sC))  \xleftarrow{\theta(1\otimes \alpha)}
 \Tot_\bt\left(C^\phi(\Gamma_\phi,\cA)\right)^\natural\xrightleftharpoons[\AW^\natural]{\shuffle^\natural} \Diag_\bt\left(C^\phi(\Gamma_\phi,\cA)\right)^\natural \xrightarrow{\mu_\phi} C_\bt(\cA_\Gamma)_{[\phi]}^\natural.
 \label{eq:infinite.quasi-isom-CAG} 
\end{equation}
This provides us with an isomorphism,
\begin{equation}
\HC_\bt(\cA_\Gamma)_{[\phi]}\simeq H_\bt\left( \Tot(C^\sigma(\overline{\Gamma}_\phi ,\sC))\right).
\label{eq:infinite.isom-HCAG-HTotCsGsC} 
\end{equation}

\item[(2)] Under isomorphism~(\ref{eq:infinite.isom-HCAG-HTotCsGsC}) the periodicity operator of $\HC_\bt(\cA)_{[\phi]}$ is given by the cap product, 
\begin{equation*}
e_\phi \frown - : H_\bt(\Tot(C^\sigma(\overline{\Gamma}_\phi ,\sC))) \longrightarrow H_{\bt-2}(\Tot(C^\sigma(\overline{\Gamma}_\phi ,\sC))),
\end{equation*}
where $e_\phi\in H^2(\OG_\phi,k)$ is the Euler class of the extension 
$1\rightarrow \brak\phi \rightarrow \Gamma_\phi \rightarrow    \overline{\Gamma}_\phi\rightarrow 1$. 
\end{enumerate}
\end{theorem}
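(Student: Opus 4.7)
The plan is to chain together the results established earlier. For Part~(1), I would proceed as follows. Applied to the parachain quasi-isomorphism $\alpha$, Proposition~\ref{prop:functoriality-CphiAsC} furnishes a mixed complex quasi-isomorphism $1\otimes \alpha \colon \Tot_\bt(C^\phi(\Gamma_\phi, \cA)) \to \Tot_\bt(C^\phi(\Gamma_\phi, \sC))$, which induces a quasi-isomorphism on the associated cyclic complexes. Since $\sC$ is a $\phi$-invariant mixed complex, Proposition~\ref{prop:infinite-order.quasi-isom-sC} then provides the quasi-isomorphism $\theta \colon \Tot_\bt(C^\phi(\Gamma_\phi, \sC))^\natural \to \Tot_\bt(C^\sigma(\OG_\phi, \sC))$ defined in~(\ref{eq:infinite-order.theta}). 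Composing yields $\theta(1\otimes \alpha)$, and appending the quasi-isomorphisms $\shuffle^\natural$, $\AW^\natural$, and $\mu_\phi$ from Proposition~\ref{prop:quasi-isomorphism-CphiGA-CAGphi} completes the zig-zag~(\ref{eq:infinite.quasi-isom-CAG}). Taking homology gives the isomorphism~(\ref{eq:infinite.isom-HCAG-HTotCsGsC}).

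For Part~(2), I would track the periodicity operator through each arrow of the zig-zag. The cyclic module map $\mu_\phi$ and the $S$-maps $\shuffle^\natural$, $\AW^\natural$ furnished by Proposition~\ref{prop:bi-paracyclic-EZ} strictly intertwine the respective periodicity operators. The map $1\otimes \alpha$ arises from a mixed complex map and hence also induces an $S$-map on cyclic complexes. The crucial step is $\theta$: it is not itself an $S$-map, but the explicit chain homotopy provided by the second assertion of Proposition~\ref{prop:infinite-order.quasi-isom-sC} shows that, upon passing to homology, $\theta$ intertwines the source periodicity operator with the operator $u_\phi \frown -$ that defines the $S$-module structure on $\Tot_\bt(C^\sigma(\OG_\phi, \sC))$. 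Since $u_\phi$ represents the Euler class $e_\phi \in H^2(\OG_\phi, k)$, the induced periodicity operator on $H_\bt(\Tot(C^\sigma(\OG_\phi, \sC)))$ is precisely cap product with $e_\phi$.

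The main obstacle is Part~(2), and in particular the careful bookkeeping of signs, degree shifts, and the triangular $S$-module structure of $\Tot(C^\sigma(\OG_\phi, \sC))$, so that the periodicity operator is identified with cap product with $e_\phi$ itself and not with a sign- or Euler-twisted variant. The explicit homotopy $h_\phi \otimes 1$ from Proposition~\ref{prop:infinite-order.quasi-isom-sC} must be combined coherently with the $S$-homotopy relating $\shuffle^\natural$ and $\AW^\natural$ from Proposition~\ref{prop:bi-paracyclic-EZ} to produce a single explicit $S$-homotopy realizing the intertwining along the entire zig-zag. This is ultimately a tracking exercise, but a delicate one.
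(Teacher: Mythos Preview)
Your proposal is correct and follows essentially the same route as the paper, which simply states that the theorem follows by combining Proposition~\ref{prop:infinite-order.quasi-isom-sC} with Proposition~\ref{prop:functoriality-CphiAsC} and Proposition~\ref{prop:quasi-isomorphism-CphiGA-CAGphi}. Your final paragraph is overcautious: since $\mu_\phi$, $\shuffle^\natural$, $\AW^\natural$, and $1\otimes\alpha$ are already $S$-maps, the only place a homotopy is needed to identify the periodicity operator is at $\theta$, and that is exactly what the second assertion of Proposition~\ref{prop:infinite-order.quasi-isom-sC} supplies---there is no further ``combining'' of homotopies required.
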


\begin{remark}
  When $\cA=k$ we recover the description of $\HC_\bt(k\Gamma)_{[\phi]}$ and $\HP_\bt(k \Gamma)_{[\phi]}$ by Burghelea~\cite{Bu:CMH85}.
\end{remark}

Combining Theorem~\ref{thm:infinite.HCAGphi} with Proposition~\ref{prop:triang-S-mod.horizontal-SC} we obtain the following corollary.

\begin{corollary}\label{cor:infinite.FT-spectral-sequence-good}
 Under the assumptions of Theorem~\ref{thm:infinite.HCAGphi}, the quasi-isomorphisms~(\ref{eq:infinite.quasi-isom-CAG}) and the filtration by columns of $\Tot_\bt(C^\sigma(\OG_\phi, \sC))$ give rise to a spectral sequence,
 \begin{equation}
E^2_{p,q}= H_p(\OG_\phi, H_q(\sC)) \Longrightarrow \HC_{p+q}(\cA_\Gamma)_{[\phi]},
\label{eq:spectral-sequence-infinite-order-good}
\end{equation}
where $H_\bt(\sC)$ is the ordinary homology of $\sC$. If  the $b$-differential of $\sC$ is zero, then $E^2_{p,q}= H_p(\OG_\phi, \sC_q)$ and the $E_2$-level differential is given by $(-1)^pB(u_\phi \frown -): H_p(\OG_\phi, \sC_q) \longrightarrow H_{p-2}(\OG_\phi, \sC_{q+1})$. 
\end{corollary}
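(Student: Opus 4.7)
The plan is to combine Proposition~\ref{prop:triang-S-mod.horizontal-SC} with the chain-complex quasi-isomorphism supplied by Theorem~\ref{thm:infinite.HCAGphi}. The triangular $S$-module of interest is $C^\sigma(\OG_\phi,\sC) = C^\sigma(\OG_\phi) \otimes_{\OG_\phi} \sC$, obtained via the construction of Section~\ref{sec:triangular-S-module}: the horizontal $S$-module is $C^\sigma(\OG_\phi) = (C_\bt(\OG_\phi), \partial, u_\phi \frown -)$, while the vertical parachain complex is the $\phi$-invariant mixed complex $\sC=(\sC_\bt,b,B)$. In particular the bigraded module is $C_{p,q} = C_p(\OG_\phi)\otimes_{\OG_\phi} \sC_q$ and, unwinding the definitions, the total differential is exactly the $d^\dagger$ displayed in~(\ref{eq:infinite.TotCsGsC}).

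First I would feed this triangular $S$-module into Proposition~\ref{prop:triang-S-mod.horizontal-SC}. The filtration by columns gives a spectral sequence with $E^2_{p,q} = H_p^h H_q^v(C^\sigma(\OG_\phi,\sC))$ abutting to $H_{p+q}(\Tot(C^\sigma(\OG_\phi,\sC)))$. To compute $H_q^v$ one observes that the vertical differential is $1\otimes b$. Since each functor $C_p(\OG_\phi)\otimes_{\OG_\phi}-$ is exact (as noted at the end of Section~\ref{sec:CphiGA}), passage to vertical homology is term-wise, and one obtains $H_{p,q}^v(C^\sigma(\OG_\phi,\sC)) = C_p(\OG_\phi)\otimes_{\OG_\phi} H_q(\sC)$. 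The induced horizontal differential is precisely the group-homology differential $\partial$ with coefficients in the $k\OG_\phi$-module $H_q(\sC)$, so $H_p^h H_q^v = H_p(\OG_\phi, H_q(\sC))$. The abutment is then identified with $\HC_{p+q}(\cA_\Gamma)_{[\phi]}$ via the quasi-isomorphism~(\ref{eq:infinite.quasi-isom-CAG}) of Theorem~\ref{thm:infinite.HCAGphi}.

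For the special case in which the $b$-differential of $\sC$ vanishes, $H_q(\sC)=\sC_q$ immediately, giving $E^2_{p,q} = H_p(\OG_\phi,\sC_q)$. To identify the $E^2$-differential, one recalls that in the filtration by columns the $d_r$-differential is extracted from the component of $d^\dagger$ that drops the column index by exactly $r$. Since $d^\dagger = \partial + (-1)^p b + (-1)^p B(u_\phi \frown -)$, the $b$-term is zero by hypothesis, the $\partial$-term accounts for the $E_1$-differential, and the only remaining piece is $(-1)^p B(u_\phi\frown-)$, which lowers $p$ by $2$ and raises $q$ by $1$. A routine filtration-bookkeeping verification shows that this induces exactly the displayed $E_2$-differential $(-1)^p B(u_\phi\frown -)\colon H_p(\OG_\phi,\sC_q)\to H_{p-2}(\OG_\phi,\sC_{q+1})$.

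The main obstacle is the second step: checking that the exactness noted in Section~\ref{sec:CphiGA} delivers the desired term-wise identification of vertical homology and that the induced horizontal differential is indeed the standard group-homology boundary with coefficients in $H_q(\sC)$. In the special case, one must also make sure the $E_2$-differential is neither obstructed by a lower-order correction nor killed by a sign ambiguity; this is a matter of carefully tracking signs in $d^\dagger$ and in the filtration spectral sequence, but there are no conceptual obstructions once one has identified the triangular $S$-module structure.
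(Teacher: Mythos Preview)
Your proposal is correct and follows exactly the route the paper indicates: the corollary is stated immediately after Theorem~\ref{thm:infinite.HCAGphi} with the one-line justification ``Combining Theorem~\ref{thm:infinite.HCAGphi} with Proposition~\ref{prop:triang-S-mod.horizontal-SC} we obtain the following corollary,'' and you have simply unpacked that combination. Your identification of $E^2_{p,q}$ via the exactness of $C_p(\OG_\phi)\otimes_{\OG_\phi}-$, and your tracking of the $E_2$-differential when $b=0$ (no zig-zag correction since $d_0=0$), are the right details to fill in.
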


\begin{remark}\label{rmk:infinite.FT-sequence-good}
 For $\sC=C^\phi(\cA)$ and $\alpha=\op{id}$ the spectral sequence~(\ref{eq:spectral-sequence-infinite-order-good}) specializes to the spectral sequence of Feigin-Tsygan in the infinite order case (Theorem~4.1.1 and Remark~4.1.2 of~\cite{FT:LNM87}).
\end{remark}

The module of group cochains $C^\bt(\OG_\phi,k)$ is a differential graded ring under the usual cup product. The usual cap product then provides us with an associative differential graded action of $C^\bt(\OG_\phi, k)$ on $C_\bt(\OG_\phi)$. By equivariance the cap product extends to a graded bilinear map, 
\begin{equation}
- \frown - : C^\bt(\OG_\phi) \times \Tot_\bt\left(C^\sigma(\OG_\phi, \sC)\right) \longrightarrow \Tot_\bt\left(C^\sigma(\OG_\phi, \sC)\right).
\label{eq:infinite.cap-product-TotCsOGsC}
\end{equation}
We obtain a graded differential action of $C^\bt(\OG_\phi,k)$ on $\Tot_\bt(C^\sigma(\OG_\phi, \sC))$. Combining this with Theorem~\ref{thm:infinite.HCAGphi} we then arrive at the following corollary.

\begin{corollary}\label{cor:infinite.cohom-action-good}
 Under the assumptions of Theorem~\ref{thm:infinite.HCAGphi}, the quasi-isomorphisms~(\ref{eq:infinite.quasi-isom-CAG}) and the cap product~(\ref{eq:infinite.cap-product-TotCsOGsC}) give rise to a graded action of the cohomology ring $H^\bt(\OG_\phi, k)$ on $\HC_\bt(\cA_\Gamma)_{[\phi]}$. The  periodicity operator  $\HC_\bt(\cA_\Gamma)_{[\phi]}$ is given by the action of the Euler class $e_\phi$. In particular, $\HP(\cA_\Gamma)_{[\phi]}=0$ whenever $e_\phi$ is nilpotent in $H^\bt(\OG_\phi,k)$. 
\end{corollary}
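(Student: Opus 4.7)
The plan is to establish at the chain level an action of the differential graded ring $C^\bt(\OG_\phi,k)$ (with the cup product and the coboundary $\delta$) on the complex $\Tot_\bt(C^\sigma(\OG_\phi,\sC))$, then descend to cohomology and transport via the isomorphism~(\ref{eq:infinite.isom-HCAG-HTotCsGsC}) supplied by Theorem~\ref{thm:infinite.HCAGphi}. The advertised graded action on $\HC_\bt(\cA_\Gamma)_{[\phi]}$ is then obtained by pullback, and the identification of the periodicity operator is forced by the construction.

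First, I would verify the Leibniz rule $[d^\dagger, u \frown -] = \pm (\delta u) \frown -$ summand by summand against the three pieces of $d^\dagger = \partial + (-1)^p b + (-1)^p B (u_\phi \frown -)$ from~(\ref{eq:infinite.TotCsGsC}). For the $\partial$ summand this is the classical cap--coboundary identity $\partial (u \frown x) - (-1)^{|u|}(u \frown \partial x) = (\delta u) \frown x$ on $C_\bt(\OG_\phi)$, tensored with the identity on $\sC$. For the $b$ summand, since the cap product acts only on the $C_\bt(\OG_\phi)$ tensor factor while $b$ acts on the $\sC$ factor, the two commute up to the sign shift coming from the drop of horizontal degree by $|u|$. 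The same applies to the $B(u_\phi \frown -)$ piece, with the extra input that compositions of cap products reduce to cap products by cup products: after signs, the dependence of $u\frown-$ on $u$ is through the cohomology class $[u]$.

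Second, since coboundaries act through null-homotopic operators and cup products are strictly associative on cochains, the action descends to an associative graded action of $H^\bt(\OG_\phi,k)$ on $H_\bt(\Tot(C^\sigma(\OG_\phi,\sC)))$. Pulling back along the isomorphism furnished by Theorem~\ref{thm:infinite.HCAGphi} gives the graded action on $\HC_\bt(\cA_\Gamma)_{[\phi]}$. The identification of the periodicity operator with the action of $e_\phi$ is then immediate: by construction the $S$-operator on the triangular $S$-module $C^\sigma(\OG_\phi,\sC)$ is $u_\phi \frown -$, and since $[u_\phi]=e_\phi$ this is exactly the action of $e_\phi$ under the ring action just defined; combining with Theorem~\ref{thm:infinite.HCAGphi}(2) yields the claim.

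For the nilpotency clause, if $e_\phi^N = 0$ in $H^\bt(\OG_\phi,k)$ then $S^N$ acts as zero on $\HC_\bt(\cA_\Gamma)_{[\phi]}$; since $\HP_\bt(\cA_\Gamma)_{[\phi]} = \varprojlim_S \HC_{2q+\bt}(\cA_\Gamma)_{[\phi]}$, any compatible sequence $(x_q)$ satisfies $x_q = S^N x_{q+N} = 0$ for every $q$, whence $\HP_\bt(\cA_\Gamma)_{[\phi]} = 0$. I expect the main technical obstacle to lie in the sign bookkeeping in step one, specifically in verifying that the composition of $u \frown -$ with the $B(u_\phi \frown -)$ piece of $d^\dagger$ is controlled on the nose by the cup product on cochains; once graded commutativity of cup products is invoked on representatives the descent is painless, but laying out the precise chain-level homotopies absorbing the discrepancy requires careful attention to Koszul signs.
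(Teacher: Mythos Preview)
Your proposal is correct and follows exactly the route the paper takes: the text immediately preceding the corollary asserts that the cap product~(\ref{eq:infinite.cap-product-TotCsOGsC}) is a graded differential action of $C^\bt(\OG_\phi,k)$ on $\Tot_\bt(C^\sigma(\OG_\phi,\sC))$, and the corollary is then obtained by transporting through the isomorphism of Theorem~\ref{thm:infinite.HCAGphi}, with the periodicity identification coming from part~(2). One small caution on your nilpotency clause: $\HP_\bt(\cA_\Gamma)_{[\phi]}$ is the homology of the chain-level inverse limit, not $\varprojlim_S \HC_{2q+\bt}$, so strictly speaking you should either invoke the Milnor $\varprojlim^1$ sequence (both terms vanish since $S^N=0$ forces Mittag-Leffler) or argue directly that $S^N$ is chain null-homotopic on $\Tot(C^\sigma(\OG_\phi,\sC))$ once $u_\phi^N$ is a coboundary.
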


\begin{remark}\label{rmk:infinite.Nistor-good}
Nistor~\cite{Ni:InvM90} proved that, for any infinite order element $\phi \in \Gamma$,  the cyclic homology  $\HC_\bt(\cA_\Gamma)_{[\phi]}$ is a module over $H^\bt(\OG_\phi, k)$ and the action of $e_\phi$ gives the periodicity operator. Therefore, Corollary~\ref{cor:infinite.cohom-action-good} provides us with a simple derivation of Nistor's result when the action of $\phi$ on $\cA$ is good. In this case, we actually get a more precise result since the cap product~(\ref{eq:infinite.cap-product-TotCsOGsC}) already provides us with a differential graded action at the level of chains. 
\end{remark}

\subsection{General infinite order actions}
For a general action of an infinite order $\phi \in \Gamma$ we have the following result.

\begin{theorem}\label{thm:infinite.HCAGphi-FT-sequence-general}
 Let $\phi \in \Gamma$ have infinite order, and set $\overline{\Gamma}_\phi = \Gamma_\phi/ \brak\phi$. In addition, let $\alpha: C^\phi_\bt(\cA)\rightarrow \sC_\bt$ be a quasi-isomorphism of $\phi$-parachain complexes. Then we have the following quasi-isomorphisms of cyclic complexes, 
  \begin{equation}
\Tot_\bt\left(C^\phi({\Gamma}_\phi,\sC)\right)^\natural  \xleftarrow{1\otimes \alpha}
 \Tot_\bt\left(C^\phi(\Gamma_\phi,\cA)\right)^\natural\xrightleftharpoons[\AW^\natural]{\shuffle^\natural} \Diag_\bt\left(C^\phi(\Gamma_\phi,\cA)\right)^\natural \xrightarrow{\mu_\phi} C_\bt(\cA_\Gamma)_{[\phi]}^\natural.
 \label{eq:infinite.quasi-isom-CAG-general} 
\end{equation}
These quasi-isomorphisms and the filtration by columns of $\Tot_\bt(C^\phi(\Gamma_\phi)^\natural\otimes_{\Gamma_\phi} \sC)= \Tot_\bt(C^\phi(\Gamma_\phi)\otimes_{\Gamma_\phi} \sC)^\natural$ give rise to a spectral sequence, 
\begin{equation}
E^2_{p,q}=H_p(\overline{\Gamma}_\phi,H_q(\sC)) \Longrightarrow \HC_{p+q}(\cA_\Gamma)_{[\phi]}. 
\label{eq:infinite.FT-spectral-sequence}
\end{equation}
\end{theorem}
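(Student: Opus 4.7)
The statement decomposes into the chain of quasi-isomorphisms~(\ref{eq:infinite.quasi-isom-CAG-general}) and the spectral sequence~(\ref{eq:infinite.FT-spectral-sequence}). For the quasi-isomorphisms, the plan is simply to concatenate existing results: the two rightmost arrows are the content of Proposition~\ref{prop:quasi-isomorphism-CphiGA-CAGphi}, and the leftmost arrow $1\otimes\alpha$ is a quasi-isomorphism by Proposition~\ref{prop:functoriality-CphiAsC} applied to the quasi-isomorphism of $\phi$-parachain complexes $\alpha:C^\phi(\cA)\to\sC$. No hypothesis beyond $\sC$ being a $\phi$-parachain complex is needed at this stage; in particular one does not require $\sC$ to be a mixed complex or the action of $\phi$ to be good.

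For the spectral sequence, I would first identify $\Tot_\bt(C^\phi(\Gamma_\phi)\otimes_{\Gamma_\phi}\sC)^\natural$ with the total $S$-module of the horizontal triangular $S$-module $C^\phi(\Gamma_\phi)^\natural\otimes_{\Gamma_\phi}\sC$, where the tensor product is formed as in Section~\ref{sec:triangular-S-module} by viewing $C^\phi(\Gamma_\phi)^\natural$ as a para-$S$-module of $k\Gamma_\phi$-modules. This is a direct unwinding of definitions and is exactly the equality $\Tot_\bt(C^\phi(\Gamma_\phi)^\natural\otimes_{\Gamma_\phi}\sC)=\Tot_\bt(C^\phi(\Gamma_\phi)\otimes_{\Gamma_\phi}\sC)^\natural$ featured in the statement. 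Proposition~\ref{prop:triang-S-mod.horizontal-SC} then produces, from the filtration by columns, a spectral sequence with $E^2_{p,q}=H_p^hH_q^v$ converging to the homology of this total $S$-module; the first part of the theorem identifies that limit with $\HC_{p+q}(\cA_\Gamma)_{[\phi]}$.

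The computation of the $E^2$-term proceeds in two steps. First, since $C_p^\phi(\Gamma_\phi)=k\Gamma_\phi^{p+1}$ is free as a $k\Gamma_\phi$-module, the functor $C_p^\phi(\Gamma_\phi)\otimes_{\Gamma_\phi}-$ is exact, giving $H^v_{p,q}=C_p^\phi(\Gamma_\phi)\otimes_{\Gamma_\phi}H_q(\sC)$ and more generally $H^v_q=C^\phi(\Gamma_\phi)^\natural\otimes_{\Gamma_\phi}H_q(\sC)$. Second, for any $\phi$-parachain complex $\sC$, the defining relation $\phi^{-1}=1-(bB+Bb)$ yields $\phi^{-1}x=x-b(Bx)$ for every $b$-cycle $x$; hence $\phi$ acts trivially on $H_\bt(\sC)$, and $H_q(\sC)$ acquires the structure of a $\phi$-invariant $k\Gamma_\phi$-module, so that $C^\phi(\Gamma_\phi,H_q(\sC))$ is a genuine mixed complex. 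Lemma~\ref{lem:homotopy-inverses-sM} then supplies a quasi-isomorphism $C^\phi(\Gamma_\phi,H_q(\sC))^\natural\simeq C(\OG_\phi,H_q(\sC))$, and combining both steps yields $E^2_{p,q}=H_p^hH_q^v=\HC_p(C^\phi(\Gamma_\phi,H_q(\sC)))=H_p(\OG_\phi,H_q(\sC))$, as claimed.

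The main obstacle I anticipate is bookkeeping rather than substance. One must verify carefully that the column filtration coming from the horizontal triangular $S$-module structure on $C^\phi(\Gamma_\phi)^\natural\otimes_{\Gamma_\phi}\sC$ matches the filtration by $C^\phi(\Gamma_\phi)$-degree used when interpreting $\Tot_\bt(C^\phi(\Gamma_\phi)\otimes_{\Gamma_\phi}\sC)^\natural$ as a cyclic complex, and that the $\OG_\phi$-module structure on $H_q(\sC)$ produced by the relation $\phi^{-1}x=x-b(Bx)$ coincides with the one naturally descending from the $\Gamma_\phi$-action on $\sC$. Both verifications are routine once the formalism of Section~\ref{sec:triangular-S-module} is applied carefully, but they should be stated explicitly to ensure that the three spectral sequences appearing in the paper are transported compatibly along the quasi-isomorphism chain~(\ref{eq:infinite.quasi-isom-CAG-general}).
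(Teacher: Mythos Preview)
Your proposal is correct and follows essentially the same route as the paper's sketch. The paper invokes Proposition~\ref{prop:triang-S-mod.spectral-sequences-cyclindrical} directly on the cylindrical complex $C^\phi(\Gamma_\phi,\sC)$ to obtain $E^2_{p,q}=\HC_p(H_q^v(C^\phi(\Gamma_\phi,\sC)))$, whereas you unwind this by applying Proposition~\ref{prop:triang-S-mod.horizontal-SC} to the associated horizontal triangular $S$-module $C^\phi(\Gamma_\phi)^\natural\otimes_{\Gamma_\phi}\sC$; these are the same spectral sequence, and your added remarks on the exactness of $C_p(\Gamma_\phi)\otimes_{\Gamma_\phi}-$ and on why $\phi$ acts trivially on $H_\bt(\sC)$ simply make explicit what the paper's sketch leaves implicit.
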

 \begin{proof}[Sketch of Proof] 
 The quasi-isomorphisms~(\ref{eq:infinite.quasi-isom-CAG-general}) follow from 
Proposition~\ref{prop:functoriality-CphiAsC} and Proposition~\ref{prop:quasi-isomorphism-CphiGA-CAGphi}. They provide us with an isomorphism $\HC_\bt(\cA_\Gamma)_{[\phi]}\simeq \HC_\bt(\Tot_\bt(C^\phi(\Gamma_\phi,\sC)))$. By Proposition~\ref{prop:triang-S-mod.spectral-sequences-cyclindrical} the cyclic homology $ \HC_\bt(\Tot_\bt(C^\phi(\Gamma_\phi,\sC)))$ is computed by  a spectral sequence with $E^2$-term $\HC_p(H_q^v(C^\phi(\Gamma, \sC)))$. In this case  $H_q^v(C^\phi(\Gamma_\phi, \sC))$ is just the mixed complex $C^\phi(\Gamma_\phi, H_q(\sC))$. By Lemma~\ref{lem:homotopy-inverses-sM} the cyclic homology of $C^\phi(\Gamma_\phi, H_q(\sC))$ is identified with $H_p(\OG_\phi, H_q(\sC))$. This gives the spectral sequence~(\ref{eq:infinite.FT-spectral-sequence})
\end{proof}

\begin{remark}
  In the same way as in Remark~\ref{rmk:infinite.FT-sequence-good}, the spectral sequence~(\ref{eq:infinite.FT-spectral-sequence}) gives back the spectral sequence of Feigin-Tsygan~\cite{FT:LNM87}. Therefore, Theorem~\ref{thm:infinite.HCAGphi-FT-sequence-general} gives a simple derivation of that spectral sequence in the infinite order case when $k\supset \Q$.
\end{remark}

The results of Nistor~\cite{Ni:InvM90} alluded to in Remark~\ref{rmk:infinite.Nistor-good} involve various difficult homological algebra arguments. We shall now explain how to obtain the results of Nistor as a consequence of a general and simple coproduct construction for paracyclic modules. 

Recall that the ordinary Alexander-Whitney map provides us with a coassociative coproduct for simplicial modules. For instance, consider the diagonal map $\delta: C_\bt(\Gamma,k)\rightarrow \Diag_\bt( C(\Gamma,k)\otimes C(\Gamma,k))$ given by $\delta(\eta)=\eta\otimes \eta$, $\eta \in C_m(\Gamma,k)$. This is a map of simplicial modules. Composing it with the Alexander-Whitney map we obtain a coassociative graded coproduct $\Delta:=\AW \circ \delta:C_\bt(\Gamma,k) \rightarrow \Tot_\bt( C(\Gamma,k)\otimes C(\Gamma,k))$. By duality this gives rise to the standard cup product on the group cohomology complex $(C^\bt(\Gamma,k),\partial)$. 

Suppose that $\phi$ is a central element of $\Gamma$.  
Let $\delta: C^\phi(\Gamma) \rightarrow \Diag_\bt( C^\phi(\Gamma,k)\otimes C^\phi(\Gamma))$ be the graded $k\Gamma$-module map given by
\begin{equation*}
\delta (\psi_0, \ldots, \psi_m)= [(\psi_0,\ldots,\psi_m)\otimes_\Gamma 1]\otimes(\psi_0,\ldots,\psi_m), \qquad \psi_j \in \Gamma. 
\end{equation*}
 Here the action of $\Gamma$ on $C_\bt(\Gamma,k)\otimes C_\bt(\Gamma)$ is on the second factor. We obtain a map of parachain complexes, and so this gives rise to an 
$S$-map $C^\phi(\Gamma)^\natural \rightarrow \Diag(C^\phi(\Gamma,k)\otimes C^\phi(\Gamma))$. Composing this map with the bi-paracyclic Alexander-Whitney map from Proposition~\ref{prop:bi-paracyclic-EZ} we then obtain an $S$-map $\AW^\natural \circ \delta: C^\phi(\Gamma)^\natural \rightarrow \Tot(C^\phi(\Gamma,k)\otimes C^\phi(\Gamma))^\natural$.

Let $C^\flat(\OG,k)$ be the mixed complex $(C_\bt(\OG), \partial,0)$. As $\varepsilon \wpi B_\phi=0$ we get a parachain bicomplex map 
$(\varepsilon \wpi)\otimes 1: C^\phi_\bt (\Gamma,k)\otimes C^\phi_\bt(\Gamma)\rightarrow C^\flat_\bt(\OG,k)\otimes C^\phi_\bt(\Gamma)$. This then gives rise to a parachain 
$S$-module map $(\varepsilon \wpi)\otimes 1: \Tot(C^\phi(\Gamma,k)\otimes C^\phi(\Gamma))^\natural \rightarrow \Tot(C^\flat(\OG,k)\otimes C^\phi(\Gamma))^\natural$. As explained in Section~\ref{sec:triangular-S-module}, the para-$S$-module $\Tot(C^\flat(\OG,k)\otimes C^\phi(\Gamma))^\natural$ can be realized as the total para-$S$-module of some vertical triangular para-$S$-module. In the case of $\Tot(C^\flat(\OG,k)\otimes C^\phi(\Gamma))^\natural$ that triangular para-$S$-module is the tensor product of the chain complex $(C(\OG,k),\partial)$ with the para-$S$-module $C^\phi(\Gamma)^\natural$. Denote by $C(\OG,k)\otimes C^\phi(\Gamma)^\natural$ this tensor product, 
we obtain a para-$S$-module map $(\varepsilon \wpi)\otimes 1: \Tot_\bt(C^\phi_\bt (\Gamma,k)\otimes C^\phi(\Gamma))^\natural 
\rightarrow \Tot_\bt(C^\flat(\OG,k)\otimes C^\phi(\Gamma)^\natural)$. Composing it with the $S$-map $\AW^\natural \circ \delta$ above we obtain a para-$S$-module map, 
\begin{equation*}
\Delta^\natural:=((\varepsilon \wpi)\otimes 1)\AW^\natural \circ \delta: C^\phi(\Gamma)^\natural \longrightarrow \Tot_\bt(C^\flat(\OG,k)\otimes C^\phi(\Gamma)^\natural). 
\end{equation*}
It then follows that, for any $\phi$-parachain complex $\sC$, we obtain an $S$-module map,
\begin{equation*}
\Delta^\natural\otimes 1: \Tot_\bt\left(C^\phi(\Gamma,\sC)\right)^\natural \longrightarrow \Tot_\bt\left(C^\flat(\OG,k)\otimes \Tot (C^\phi(\Gamma,\sC))^\natural\right). 
\end{equation*}
Combining this with the duality between $C_\bt(\OG,k)$ and $C^\bt(\OG,k)$ then provides us with a differential graded bilinear map,
\begin{equation}
\triangleright  : C^\bt(\OG,k)\times \Tot_\bt\left(C^\phi(\Gamma,\sC)\right)^\natural \longrightarrow  \Tot_\bt\left(C^\phi(\Gamma,\sC)\right)^\natural. 
\label{eq:infinite.action}
\end{equation}
More precisely, given any cochain $u\in C^p(\OG,k)$, $p\geq 0$, and chains $\eta \in C_\bt^\phi(G)^\natural$ and $\xi \in \sC_\bt$, we have
\begin{equation*}
u \triangleright (\eta \otimes_{\Gamma} \xi)= \left[ (u\otimes 1)\Delta^\natural \eta\right] \otimes_\Gamma \xi \in C^\phi_{\bt -p}(\Gamma)^\natural \otimes_\Gamma \sC_\bt.
\end{equation*}

To understand the relationship between $\triangleright$ and the usual cap product it is convenient to construct a twisted version of the antisymmetrization map $\varepsilon$ as follows. For $p\leq m$ let $\tau_\phi^{(p)}:C_m(\Gamma)\rightarrow C_m(\Gamma)$ and $N_\phi^{(p)}:C_m(\Gamma)\rightarrow C_m(\Gamma)$ be the $k\Gamma$-module maps given by
\begin{gather*}
 \tau_\phi^{(p)}(\psi_0,\ldots, \psi_m)=(-1)^p(\phi^{-1}\psi_p,\psi_0,\ldots, \widehat{\psi_p}, \ldots, \psi_m), \quad \psi_j\in \Gamma, \\
 N_\phi^{(p)}=1+\tau_\phi^{(p)}+ \cdots + \left( \tau_\phi^{(p)}\right)^p.   
\end{gather*}
Note that $\tau_\phi^{(m)}$ agrees with $\tau_\phi=(-1)^mt_\phi$ on $C_m^\phi(\Gamma)$. We then define $\varepsilon_\phi : C_\bt(\Gamma)\rightarrow C_\bt(\Gamma)$ by 
\begin{equation*}
\varepsilon_\phi=\frac{1}{(m+1)!} N^{(0)}_\phi N^{(0)}_\phi \cdots N^{(m)}_\phi \qquad \text{on $C_m(\Gamma)$}. 
\end{equation*}
We observe that $\wpi \varepsilon_\phi= \wpi \varepsilon= \varepsilon \wpi$. 

Let $\sM$ be a $\phi$-invariant $k\Gamma$-module. It can be checked that $\varepsilon_\phi \otimes 1: (C_\bt(\Gamma,\sM),\partial) \rightarrow (C_\bt(\Gamma,\sM),\partial)$ is a chain map. Moreover, the graded $k\Gamma$-module map $(\iota \varepsilon_\phi)\otimes 1:  C_\bt(\Gamma,\sM)\rightarrow C_\bt^\phi(\Gamma,\sM)^\natural$ descends to a chain map $((\iota \varepsilon_\phi)\otimes 1)^\lambda: C_\bt(\Gamma,\sM)^\lambda\rightarrow C_\bt^\phi(\Gamma,\sM)^\natural$ such that 
$(\wepsn\otimes 1) ((\iota \varepsilon_\phi)\otimes 1)^\lambda = (\wepsn\otimes 1) (\iota \otimes 1)^\lambda$. As $  (\iota \otimes 1)^\lambda (\gamma \otimes 1)$ is a homotopy inverse of $(\wepsn\otimes 1)$, it then follows that $((\iota \varepsilon_\phi)\otimes 1)^\lambda( \gamma \otimes 1)$ is a homotopy inverse as well.

\begin{lemma}\label{eq:infinite.action-cap-product}
 For any cocycle $u\in C^\bt(\OG,k)$, we have 
 \begin{equation*}
(\wepsn\otimes 1)(u \triangleright - )\left((\iota \varepsilon_\phi)\otimes 1\right)^\lambda( \gamma \otimes 1)= u \frown - \qquad \text{on $H_\bt(\OG, \sM)$}. 
\end{equation*}
\end{lemma}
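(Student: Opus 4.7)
The plan is to verify the identity by a direct chain-level computation, tracing an arbitrary cycle $\zeta \in C_\bt(\OG,\sM)$ through the composition on the left-hand side and comparing with the standard Alexander-Whitney formula for the cap product on $C_\bt(\OG,\sM)$. Since every map involved is natural in $\sM$, the $\sM$-factor plays only a bookkeeping role, so it is enough to establish the corresponding identity for maps of the form $(-)\otimes 1$ on $C_\bt(\Gamma)\otimes_\Gamma \sM$ at the level of chains modulo boundaries.

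First I would unpack $u\triangleright (\eta \otimes_\Gamma \xi)$ for $\eta=(\psi_0,\ldots,\psi_m) \in C_m(\Gamma)$. The map $\delta$ produces $[(\psi_0,\ldots,\psi_m)\otimes_\Gamma 1]\otimes (\psi_0,\ldots,\psi_m)$; the bi-paracyclic Alexander-Whitney map $\AW^\natural$ (Proposition~\ref{prop:bi-paracyclic-EZ}) decomposes this as a sum indexed by $S$-shifts whose zeroth-degree component is the ordinary front-face/back-face Alexander-Whitney decomposition; and $(\varepsilon\wpi)\otimes 1$ antisymmetrizes the first tensor factor and pushes it to $C_\bt(\OG,k)$. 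Pairing with a cocycle $u \in C^p(\OG,k)$ selects the $p$-dimensional component of the first factor, producing, at zeroth $S$-degree, the familiar chain $u\bigl(\wpi\varepsilon(\psi_0,\ldots,\psi_p)\bigr)\cdot (\psi_p,\ldots,\psi_m) \in C_{m-p}^\phi(\Gamma)^\natural$. I would then show that all higher $S$-shifted contributions of $\AW^\natural$ are killed by the outer $\wepsn = \varepsilon\wpi\pi^\natural$ appearing on the left-hand side, using that $\pi^\natural$ retains only the zeroth-degree component of $C^\phi(\Gamma)^\natural$ and that the $S^j$-components of $\AW^\natural$ (for $j\geq 1$) lie in the complementary subspace.

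Second, I would deal with the pre-composition by $((\iota \varepsilon_\phi)\otimes 1)^\lambda(\gamma\otimes 1)$. Here the crucial ingredients are the identity $\wpi\varepsilon_\phi = \wpi\varepsilon = \varepsilon\wpi$ recorded just before the lemma, together with the fact (also noted there) that $((\iota \varepsilon_\phi)\otimes 1)^\lambda(\gamma\otimes 1)$ is a homotopy inverse to $\wepsn\otimes 1$ via the intermediate $C^\phi(\Gamma)^\lambda$. Consequently, on homology classes, this pre-composition inverts the outer $\wepsn\otimes 1$ that sits on the opposite side of $u\triangleright -$ in the original composition. Combining this with the chain-level formula from the previous paragraph, one recognizes the standard Alexander-Whitney formula $u(\bar\psi_0,\ldots,\bar\psi_p)\cdot (\bar\psi_p,\ldots,\bar\psi_m)$ for $u\frown-$ on $C_\bt(\OG,\sM)$, completing the identification.

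The main obstacle will be the cancellation claim in step one: the precise verification that the $S^j$-components of $\AW^\natural$ with $j\geq 1$ are annihilated by $\wepsn\otimes 1$. This is the one place where the paracyclic structure (as opposed to ordinary cyclic structure) enters non-trivially and requires unpacking the explicit formula for $\AW^\natural$ from~\cite{Po:para-S-modules}. Should this direct combinatorial route prove cumbersome, a cleaner alternative is to factor through the Connes $\lambda$-complex $C^\phi(\Gamma)^\lambda$: Marciniak's result that $C^\phi(\Gamma)^\lambda$ gives a projective resolution of the trivial $k\OG$-module $k$ characterizes all graded $k\OG$-equivariant chain maps $C(\OG)\to C(\OG)$ of the appropriate degree up to chain homotopy, so that both $(\wepsn\otimes 1)(u\triangleright -)((\iota\varepsilon_\phi)\otimes 1)^\lambda(\gamma\otimes 1)$ and $u\frown-$ may be identified by verifying agreement in degree zero, where it is immediate from the definitions of $\delta$ and $\AW$.
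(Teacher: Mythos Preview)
Your cancellation argument in step one is the weak point, and it does not go through as stated. You claim that the higher $S$-components $\AW^{(j)}$ of $\AW^\natural$ ``lie in the complementary subspace'' to what $\pi^\natural$ retains. But recall how an $S$-map $f=\sum_j f^{(j)}S^j$ acts: for $x=(x_0,x_1,\ldots)\in C_m^\natural$ one has $\pi^\natural f(x)=\sum_j f^{(j)}(x_j)$. Thus $\pi^\natural$ kills the $j\geq 1$ contributions \emph{only when the input is concentrated in the top component}. Your step-one computation is carried out on a pure simplex $\eta=(\psi_0,\ldots,\psi_m)$, i.e.\ a top-only input, where the cancellation is vacuous anyway. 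The actual input, however, comes from $((\iota\varepsilon_\phi)\otimes 1)^\lambda(\gamma\otimes 1)$, and Kassel's map $\iota$ necessarily has non-trivial lower components (otherwise it would not intertwine the differentials). So after precomposition the higher $\AW^{(j)}$ \emph{do} contribute to the top component, and your $\pi^\natural$ argument does not neutralize them.

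The paper handles this differently. It first pushes $\wpi$ through $\delta$ and $\AW^\natural$ (using that $\wpi$ is a paracyclic map and that $\wpi\varepsilon_\phi=\varepsilon\wpi$), thereby converting the entire computation to the cyclic module $C(\OG)$. On that side the relevant cancellation mechanism is not $\pi^\natural$ but the antisymmetrization map $\varepsilon$: it annihilates all degenerate chains, including those in the range of the extra degeneracy, and the higher components of $\AW^\natural$ are built from precisely such degeneracies. This is what allows one to identify $(\varepsilon\otimes\varepsilon)(\pi^\natural\otimes\pi^\natural)\AW^\natural\delta$ with $(\varepsilon\otimes\varepsilon)\AW\delta$ regardless of which $S$-component the input lives in.

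Your fallback via projective resolutions is a legitimate strategy, but the stated criterion (``agreement in degree zero'') is not enough: two $k\OG$-equivariant chain maps $C_\bt(\OG)\to C_{\bt-p}(\OG)$ are homotopic iff they represent the same class in $\op{Ext}^p_{k\OG}(k,k)\cong H^p(\OG,k)$, and identifying that class for the left-hand composite still requires tracing through $\Delta^\natural$---which brings you back to the cancellation problem above.
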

 \begin{proof}[Sketch of Proof] The bulk of the proof is to prove the equality,
\begin{equation}
(1\otimes \wepsn \otimes 1) (\Delta^\natural \otimes 1)\left[(\iota \varepsilon_\phi)\otimes 1\right]^\lambda( \gamma \otimes 1)=
 (\varepsilon \otimes \varepsilon \otimes 1) \left[ (\AW \delta)\otimes 1\right]  .
 \label{eq:infinite.coproduct-AW}
\end{equation}
Once this equality is proved the lemma follows by using the facts that $u \frown - =(u\otimes 1) \AW \delta$ and  the chain maps $\varepsilon$ and $ (\varepsilon \wpi^\lambda \gamma)$ both are $\partial$-homotopic to identity maps, and hence induce identity maps on homology. 

To prove~(\ref{eq:infinite.coproduct-AW}) we observe that its left-hand side is equal to
\begin{multline*}
 (1\otimes \wepsn \otimes 1)  \left[(\varepsilon \wpi)\otimes 1 \otimes 1\right] (\AW^\natural \otimes 1) (\delta \otimes 1)
 \left[(\iota \varepsilon_\phi)\otimes 1\right]^\lambda( \gamma \otimes 1)\\
=   (\varepsilon \otimes \varepsilon \otimes 1)  (\pi^\natural \otimes \pi^\natural \otimes 1)  (\AW^\natural \otimes 1) (\delta \otimes 1) (\wpi\otimes 1)\left[(\iota \varepsilon_\phi)\otimes 1\right]^\lambda( \gamma \otimes 1)\\
=  (\varepsilon \otimes \varepsilon \otimes 1)  (\pi^\natural \otimes \pi^\natural \otimes 1)  (\AW^\natural \otimes 1) (\delta \otimes 1) 
\left[ (\varepsilon \wpi^\lambda \gamma)\otimes 1\right]. 
\end{multline*}
We then identify the lowest side with the right-hand side of~(\ref{eq:infinite.coproduct-AW}) by using the following two facts. First, the  antisymmetrization map on 
$C_\bt(\OG)$ is annihilated by all degenerate chains, including the chain in the range of the extra degeneracy. Second, the bi-paracyclic 
Alexander-Whitney map $\AW^\natural$ is precisely built so that its zeroth degree component is given by the ordinary Alexander-Whitney map. This completes the proof. 
\end{proof}

Lemma~\ref{eq:infinite.action-cap-product} is the key ingredient in the proof of the following result.

\begin{proposition}\label{prop:infinite.action-sC}
 Let $\sC$ be a $\phi$-parachain complex. Then the bilinear map~(\ref{eq:infinite.action}) descends to an associative graded action, 
 \begin{equation*}
 \triangleright  : H^\bt(\OG, k) \times \HC_\bt \left(\Tot(C^\phi(\Gamma,\sC))\right) \longrightarrow  \HC_\bt \left(\Tot(C^\phi(\Gamma,\sC))\right). 
\end{equation*}
Moreover, the periodicity operator of $ \HC_\bt \left(\Tot(C^\phi(\Gamma,\sC))\right)$ is given by the action of the Euler class of the extension 
$1\rightarrow \brak\phi \rightarrow \overline{\Gamma} \rightarrow \Gamma \rightarrow 1$. 
\end{proposition}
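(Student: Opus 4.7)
The plan is to derive the proposition from the chain-level identification in Lemma~\ref{eq:infinite.action-cap-product}, together with the spectral sequence of Theorem~\ref{thm:infinite.HCAGphi-FT-sequence-general} for the extension to general $\phi$-parachain complexes. The proof splits into three steps: (i) show that the bilinear map~(\ref{eq:infinite.action}) descends from cocycles to cohomology classes and to cyclic homology; (ii) verify associativity; (iii) identify the periodicity operator with the action of $e_\phi$.

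First I would observe that $\Delta^\natural = ((\varepsilon\wpi)\otimes 1)\AW^\natural\circ \delta$ is a para-$S$-module map, since it is the composition of the parachain complex map $\delta$, the bi-paracyclic Alexander-Whitney $S$-map of Proposition~\ref{prop:bi-paracyclic-EZ}, and the parachain complex map $(\varepsilon \wpi)\otimes 1$. Consequently, for any cocycle $u\in C^p(\OG,k)$ the operation $u\triangleright -$ commutes with the total differential $d^\natural$ of $\Tot(C^\phi(\Gamma,\sC))^\natural$, and for a coboundary $u=\partial v$ the dual of the standard simplicial cup-homotopy provides $v\triangleright -$ as an explicit nullhomotopy of $u\triangleright -$. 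To prove associativity and identify the periodicity operator, I would exploit the spectral sequence $E^2_{p,q}=H_p(\OG_\phi, H_q(\sC)) \Rightarrow \HC_{p+q}(\Tot(C^\phi(\Gamma,\sC)))$ of Theorem~\ref{thm:infinite.HCAGphi-FT-sequence-general}. Since $\Delta^\natural$ only affects the $C^\phi(\Gamma)$-factor, the action $\triangleright$ is compatible with the column filtration and descends to an action on each page. On $E^2$, Lemma~\ref{eq:infinite.action-cap-product} applied to the $\phi$-invariant $k\Gamma$-module $H_q(\sC)$ identifies $\triangleright$ with the classical cap product of $H^\bt(\OG_\phi,k)$ on $H_\bt(\OG_\phi, H_q(\sC))$; associativity on $\HC_\bt(\Tot(C^\phi(\Gamma,\sC)))$ then follows from associativity of the cap product at the $E^2$-level, passed to the abutment.

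For the periodicity operator, Lemma~\ref{lem:infinite-order.homotopy-S} (together with Proposition~\ref{prop:infinite-order.quasi-isom-sC} and its $\phi$-invariant-module specialization Lemma~\ref{lem:homotopy-inverses-sM}) shows that at the $E^2$-level $S$ acts as cap product with $u_\phi$, which is precisely the action of the Euler class $e_\phi$ identified in the previous step. Comparing these two actions on every page of the spectral sequence yields $S=e_\phi\triangleright -$ on $\HC_\bt(\Tot(C^\phi(\Gamma,\sC)))$. The chief technical obstacle is checking that $\triangleright$ strictly preserves the column filtration and that the $E^2$-identifications lift coherently to the abutment; this amounts to naturality of $\Delta^\natural$ in $\sC$ together with the requirement that the nullhomotopies used in step~(i) also respect the filtration. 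Both follow from the explicit construction of $\AW^\natural$ in Proposition~\ref{prop:bi-paracyclic-EZ}, whose zeroth degree component is the ordinary Alexander-Whitney map, combined with the perturbation-theoretic input of Lemma~\ref{lem:homotopy-lemma}.
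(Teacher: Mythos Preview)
Your proposal is correct and follows essentially the same route as the paper: both reduce the question to the $E^2$-page of the column-filtration spectral sequence, invoke Lemma~\ref{eq:infinite.action-cap-product} to identify $\triangleright$ there with the ordinary cap product on $H_\bt(\OG,H_q(\sC))$, and then use Lemma~\ref{lem:infinite-order.homotopy-S} to match the periodicity operator with $e_\phi\frown -$ at that level. Your treatment is slightly more explicit than the paper's sketch in spelling out why the action descends (the nullhomotopy for coboundaries) and why the filtration is preserved. One small imprecision: you cite Theorem~\ref{thm:infinite.HCAGphi-FT-sequence-general}, which is stated for $\Gamma=\Gamma_\phi$ and the specific case $\sC=C^\phi(\cA)$, whereas the present proposition concerns a general $\phi$-parachain complex over a group $\Gamma$ in which $\phi$ is central. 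You should instead invoke the underlying spectral sequence directly, namely Proposition~\ref{prop:triang-S-mod.spectral-sequences-cyclindrical} applied to the cylindrical complex $C^\phi(\Gamma,\sC)$ together with Lemma~\ref{lem:homotopy-inverses-sM} to identify $\HC_p(C^\phi(\Gamma,H_q(\sC)))\simeq H_p(\OG,H_q(\sC))$; this is exactly what the paper does, and it is also what is carried out inside the proof of Theorem~\ref{thm:infinite.HCAGphi-FT-sequence-general}.
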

\begin{proof}[Sketch of Proof] It is enough to check the result at the $E^2$-level of the spectral sequence of Proposition~\ref{prop:triang-S-mod.horizontal-SC} for $C^\phi(\Gamma,\sC)$. Its $E^2$-term is 
$E^2_{p,q}=\HC_p (C^\phi(\Gamma,H_q(\sC)))$. As before $H_q(\sC)$ is a $\phi$-invariant $k\Gamma$-module, and so by Lemma~\ref{lem:homotopy-inverses-sM}  the quasi-isomorphism $\wepsn \otimes 1$ identifies $E^2_{p,q}$ with $H_p(\OG, H_q(\sC))$. Combining this with Lemma~\ref{eq:infinite.action-cap-product} we then see that at the $E^2$-level the bilinear map $ \triangleright $ is given by the ordinary cap product on $H_\bt(\OG, H_q(\sC))$. That cap product is associative and by Lemma~\ref{lem:infinite-order.homotopy-S} the cap product with the Euler class $e_\phi$ agrees with the periodicity operator. It then follows that $ \triangleright $ descends to a graded action of $H^\bt(\OG, k)$ on $\HC_\bt(\Tot(C^\phi(\Gamma,\sC)))$ and the action of $e_\phi$ gives the periodicity operator.
\end{proof}

Specializing Proposition~\ref{prop:infinite.action-sC} to $\sC=C^\phi(\cA)$ and using Proposition~\ref{prop:quasi-isomorphism-CphiGA-CAGphi} gives the following result.

\begin{theorem}[compare~\cite{Ni:InvM90}]\label{thm:infinite.action-HC}
 Let $\phi \in \Gamma$ have infinite order, and set $\overline{\Gamma}_\phi = \Gamma_\phi/ \brak\phi$.
 \begin{enumerate}
\item[(1)] The quasi-isomorphisms~(\ref{eq:quasi-isomorphism-CphiGA-CAGphi}) and the bilinear map~(\ref{eq:infinite.action}) for $\sC=C^\phi(\cA)$ give rise to an action of the cohomology ring $H^\bt(\OG_\phi,k)$ on the cyclic homology $\HC_\bt(\cA_\Gamma)_{[\phi]}$.

\item[(2)] The periodicity operator of $\HC_\bt(\cA_\Gamma)_{[\phi]}$  is given by  the action of the Euler class $e_\phi \in H^2(\overline{\Gamma}_\phi,k)$ of the extension $1\rightarrow \brak\phi \rightarrow \overline{\Gamma}_\phi \rightarrow \Gamma_\phi \rightarrow 1$. In particular, $\HP_\bt(\cA_\Gamma)_{[\phi]}=0$ whenever $e_\phi$ is nilpotent in $H^\bt(\Gamma_\phi,k)$. 
\end{enumerate}
\end{theorem}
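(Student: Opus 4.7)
The plan is to deduce Theorem~\ref{thm:infinite.action-HC} directly from Proposition~\ref{prop:infinite.action-sC} by specializing to $\sC=C^\phi(\cA)$ and then transporting the resulting structure across the quasi-isomorphisms of Proposition~\ref{prop:quasi-isomorphism-CphiGA-CAGphi}. Since $C^\phi(\cA)$ is a $\phi$-paracyclic $k\Gamma_\phi$-module, and hence a $\phi$-parachain complex of $k\Gamma_\phi$-modules, Proposition~\ref{prop:infinite.action-sC} applied with $\Gamma$ replaced by $\Gamma_\phi$ already yields an associative graded action
\begin{equation*}
\triangleright : H^\bt(\OG_\phi,k)\times \HC_\bt\left(\Tot(C^\phi(\Gamma_\phi,\cA))\right) \longrightarrow \HC_\bt\left(\Tot(C^\phi(\Gamma_\phi,\cA))\right),
\end{equation*}
under which the periodicity operator on $\HC_\bt(\Tot(C^\phi(\Gamma_\phi,\cA)))$ coincides with the action of the Euler class $e_\phi\in H^2(\OG_\phi,k)$ of the extension $1\rightarrow \brak\phi\rightarrow \Gamma_\phi\rightarrow \OG_\phi\rightarrow 1$. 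This takes care of (1) and (2) at the level of the cylindrical complex $C^\phi(\Gamma_\phi,\cA)$.

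Next I would transfer this action to $\HC_\bt(\cA_\Gamma)_{[\phi]}$ by means of the chain of quasi-isomorphisms in~(\ref{eq:quasi-isomorphism-CphiGA-CAGphi}). The key point is that $\mu_\phi$ is an embedding of cyclic $k$-modules, hence an $S$-map of the associated $S$-modules, while $\shuffle^\natural$ and $\AW^\natural$ are mutually $S$-homotopy inverse $S$-maps by Proposition~\ref{prop:bi-paracyclic-EZ}. Consequently these maps descend to isomorphisms on cyclic homology that commute with the periodicity operators, giving a canonical isomorphism $\HC_\bt(\Tot(C^\phi(\Gamma_\phi,\cA)))\simeq \HC_\bt(\cA_\Gamma)_{[\phi]}$. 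Using this isomorphism to transport $\triangleright$ produces the desired action of $H^\bt(\OG_\phi,k)$ on $\HC_\bt(\cA_\Gamma)_{[\phi]}$, with the property that the periodicity operator is given by the action of $e_\phi$. Associativity of the transported action is inherited from the associativity established in Proposition~\ref{prop:infinite.action-sC}.

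Finally, the vanishing statement $\HP_\bt(\cA_\Gamma)_{[\phi]}=0$ when $e_\phi$ is nilpotent is a formal consequence of (2). If $e_\phi^N=0$ in $H^\bt(\OG_\phi,k)$ for some $N\geq 1$, then on $\HC_\bt(\cA_\Gamma)_{[\phi]}$ the iterated periodicity operator $S^N$ equals the action of $e_\phi^N$ and therefore vanishes. Since periodic cyclic homology is computed as the inverse limit of cyclic homology along the tower of periodicity operators, an inverse system in which all transition maps are eventually zero has trivial inverse limit, whence $\HP_\bt(\cA_\Gamma)_{[\phi]}=0$. (One must be mildly careful about $\varprojlim^1$-terms, but in the present setting these also vanish because the tower stabilizes at zero, so no Mittag-Leffler issue arises.)

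The one step that requires the most care is verifying that the action $\triangleright$ on $\HC_\bt(\cA_\Gamma)_{[\phi]}$ does not depend on the auxiliary choices involved in constructing it, in particular the choice of quasi-inverse for $\shuffle^\natural$. This should follow from the fact that any two $S$-homotopy inverses of an $S$-homotopy equivalence induce the same map on cyclic homology, combined with the naturality of the coproduct construction underlying $\Delta^\natural$ with respect to $S$-module maps. Thus the only genuine content is Proposition~\ref{prop:infinite.action-sC} itself; Theorem~\ref{thm:infinite.action-HC} is obtained from it by a purely formal transport of structure.
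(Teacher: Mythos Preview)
Your proposal is correct and follows exactly the paper's approach: the paper's proof is the single sentence ``Specializing Proposition~\ref{prop:infinite.action-sC} to $\sC=C^\phi(\cA)$ and using Proposition~\ref{prop:quasi-isomorphism-CphiGA-CAGphi} gives the following result,'' and you have simply unpacked this, together with the formal inverse-limit argument for the vanishing of $\HP_\bt(\cA_\Gamma)_{[\phi]}$ when $e_\phi$ is nilpotent.
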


\begin{remark}
The nilpotence of $e_\phi$ is closely related to the  Bass and idempotent conjectures (see, e.g., \cite{Em:InvM98, Ji:KT95}). In particular, $e_\phi$ is rationally nilpotent for every infinite order element $\phi \in \Gamma$ whenever $\Gamma$ belongs to one of the following classes of groups: free products of Abelian groups, hyperbolic groups, and arithmetic groups. 
\end{remark}

%\part{Locally Convex Algebras.  Group Actions on Manifolds and Varieties}
\section{Actions on Locally Convex Algebras}\label{sec:LCA}
In this section, we explain how to extend the results of the previous sections to the setting of group actions on locally convex algebras. Throughout this section we let 
$\Gamma$ be a group which acts by continuous algebra automorphisms on a unital locally convex algebra $\cA$. 

\subsection{Splitting along conjugacy classes} 
Let $\cA_\Gamma$ be the crossed-product algebra $\cA\rtimes \Gamma$. This is the unital algebra with generators $a\in \cA$ and $u_\phi$, $\phi \in \Gamma$ subject to the relations,
\[
 a^0u_{\phi_0} a^1u_{\phi_1} =a^0(\phi_0^{-1} a^1)u_{\phi_0\phi_1}, \qquad a^j \in \cA, \ \phi_j\in \Gamma. 
 \] 
As a vector space $\cA_\Gamma$ is  $\C \Gamma \otimes \cA$. We endow  $\cA_\Gamma$ with the weakest locally convex topology with respect to which the linear embeddings $\cA\ni a \rightarrow au_\phi\in \cA_\Gamma$, $\phi \in \Gamma$, are continuous. With respect to this topology $\cA_\Gamma$ is a locally convex algebra. 

Given any $\phi \in \Gamma$, we denote by $[\phi]$ its conjugacy class in $\Gamma$. In addition,  we let $\bC_m(\cA_\Gamma)_{[\phi]}$, $m\geq 0$, be the closure of the subspace of $\bC_m(\cA_\Gamma)$ spanned by 
 tensor products $a^0u_{\phi_0}\otimes \cdots \otimes a^mu_{\phi_m}$, with $a^0,\ldots, a^m$ in $\cA$ and $\phi_0,\ldots, \phi_m$ in $\Gamma$ such that $\phi_0 \cdots \phi_m\in [\phi]$. This gives rise to a cyclic subspace $\bC(\cA_\Gamma)_{[\phi]}$. We then have the following direct sum decomposition of cyclic spaces,
 \begin{equation}
 \bC(\cA_\Gamma)= \bigoplus  \bC(\cA_\Gamma)_{[\phi]},
 \label{eq:direct-sum}
\end{equation}
 where the sum goes over all conjugacy classes in $\Gamma$.  As in Section~\ref{sec:conj-classes}, this provides us with a 
 corresponding decomposition of cyclic complexes along with a canonical inclusion of periodic cyclic complexes. Let us denote by $\bHC_\bt(\cA_\Gamma)_{[\phi]}$ (resp., $\bHP_\bt(\cA_\Gamma)_{[\phi]}$) the cyclic homology (resp., periodic cyclic homology)
of $\bC(\cA_\Gamma)_{[\phi]}$. Then we have the following splitting and canonical inclusion of graded $k$-modules, 
\begin{equation*}
 \bHC(\cA_\Gamma)= \bigoplus \bHC_\bt(\cA_\Gamma)_{[\phi]} \qquad  \text{and} \qquad  \bigoplus \bHP_\bt(\cA_\Gamma)_{[\phi]}\subset  \bHP(\cA_\Gamma).
 \label{eq:LCA.splitting-HCHP}
 \end{equation*}
Moreover, the inclusion above is actually onto when $\Gamma$ has a finite number of conjugation classes.

Let $\phi\in \Gamma$ and denote by $\Gamma_\phi$ its centralizer in $\Gamma$.  The structural operators $(d_\phi, s,t_\phi)$ of the paracyclic $\C\Gamma_\phi$-module $C^\phi(\cA)$ uniquely extend to continuous operators on $\bC_\bt(\cA)$ so that we obtain a paracyclic $\C\Gamma_\phi$-module $\bC^\phi(\cA):=(\bC_\bt(\cA), d_\phi, s, t_\phi)$. 
We denote by $\bC^\phi(\Gamma_\phi, \cA)$ the cylindrical space $C^\phi(\Gamma_\phi, \bC^\phi(\cA))$ as defined in Section~\ref{sec:CphiGA}. This is just the tensor product over $\Gamma_\phi$ of the paracyclic $\C\Gamma_\phi$-modules $C^\phi(\Gamma_\phi)$ and $\bC^\phi(\cA)$. The space of $(p,q)$-chains is $\bC^\phi_{p,q}(\Gamma_\phi, \cA):=C_p(\Gamma_\phi)\otimes_{\Gamma_\phi} \bC_q(\cA)$. We equip it with the weakest locally convex topology with respect to which the linear embeddings $\bC_q(\cA)\ni \xi \rightarrow (\psi_0,\ldots, \psi_p)\otimes_{\Gamma_\phi} \xi\in \bC^\phi_{p,q}(\Gamma_\phi, \cA)$, $\psi_j\in \Gamma_\phi$, are continuous.
 In Section~\ref{sec:conj-classes} we exhibited a cyclic space embedding and quasi-isomorphism $\mu_\phi: \Diag_\bt(C^\phi(\Gamma_\phi,\cA))\rightarrow C_\bt(\cA_\Gamma)_{[\phi]}$ given by~(\ref{eq:splitting.mu-phi}). 
This embedding is continuous, and so it uniquely extends to a continuous embedding of cyclic spaces $\mu_\phi: \Diag_\bt(\bC^\phi(\Gamma_\phi,\cA))\rightarrow \bC_\bt(\cA_\Gamma)_{[\phi]}$. When $\phi=1$ we obtain an isomorphism of cyclic spaces. Like its restriction to $\Diag_\bt(\bC^\phi(\Gamma_\phi,\cA))$ this is a quasi-isomorphism of cyclic spaces. Therefore, we arrive at the following result.

\begin{proposition}\label{prop:quasi-isomorphism-bCphiGA-bCAGphi}
 Let $\phi \in \Gamma$. Then we have the following quasi-isomorphisms of cyclic complexes,
\begin{equation}
\Tot_\bt\left(\bC^\phi(\Gamma_\phi,\cA)\right)^\natural\xrightleftharpoons[\AW^\natural]{\shuffle^\natural} \Diag_\bt\left(\bC^\phi(\Gamma_\phi,\cA)\right)^\natural \xrightarrow{\mu_\phi} \bC_\bt(\cA_\Gamma)_{[\phi]}^\natural. 
\label{eq:LCA.quasi-isomorphism-CphiGA-CAGphi}
\end{equation}
There are similar results  at the level of the corresponding periodic cyclic complexes. 
\end{proposition}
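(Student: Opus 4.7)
My plan is to establish the quasi-isomorphisms in~(\ref{eq:LCA.quasi-isomorphism-CphiGA-CAGphi}) by continuously extending all the maps and homotopies from the algebraic setting of Proposition~\ref{prop:quasi-isomorphism-CphiGA-CAGphi}, and then checking that the desired chain-level identities survive completion.

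First, I would handle the shuffle and Alexander--Whitney maps. The $S$-maps $\shuffle^\natural$ and $\AW^\natural$ supplied by Proposition~\ref{prop:bi-paracyclic-EZ} are given by universal polynomial formulas in the structural operators of the bi-paracyclic module, and so is the explicit $S$-homotopy between their composites and the identity. By construction, the structural operators $(\wbd,\ws,\wt)$ and $(d,s,t)$ of $\bC^\phi(\Gamma_\phi,\cA)$ extend continuously across the projective tensor product defining the chain spaces $\bC^\phi_{p,q}(\Gamma_\phi,\cA)=C_p(\Gamma_\phi)\otimes_{\Gamma_\phi}\bC_q(\cA)$. Therefore the shuffle/AW maps and their $S$-homotopies extend continuously, and the $S$-homotopy inverse relations persist. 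This gives the first (two-sided) quasi-isomorphism in~(\ref{eq:LCA.quasi-isomorphism-bCphiGA-bCAGphi}).

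Next, I would treat $\mu_\phi$. The explicit formula~(\ref{eq:splitting.mu-phi}) involves only the actions of fixed elements of $\Gamma_\phi$ and the multiplication in $\cA_\Gamma$; since these operations are continuous, $\mu_\phi$ extends continuously to $\Diag_\bt(\bC^\phi(\Gamma_\phi,\cA))\to \bC_\bt(\cA_\Gamma)_{[\phi]}$ and remains a cyclic space map because the cyclic identities are preserved under continuous extension of the operators. In the special case $\phi=1$, the formula~(\ref{eq:splitting.inverse-mu}) for $\mu^{-1}$ is equally continuous and involves only finitely many terms, so $\mu$ is even an isomorphism of cyclic locally convex spaces, from which the quasi-isomorphism statement is trivial in that case.

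The main obstacle is showing that, for general $\phi$, the continuous extension of $\mu_\phi$ remains a quasi-isomorphism: in the algebraic case this rests on Shapiro's lemma, whose naive transcription to locally convex algebras is not automatic. I would bypass this as follows. Fix a section $\sigma:\Gamma/\Gamma_\phi\to\Gamma$ and use it to write any elementary chain $a^0u_{\phi_0}\otimes\cdots\otimes a^mu_{\phi_m}\in \bC_m(\cA_\Gamma)_{[\phi]}$ in the normal form $\sigma(\bar{\psi})\cdot\bigl((\psi_0,\ldots,\psi_m)\otimes_{\Gamma_\phi}(b^0\otimes\cdots\otimes b^m)\bigr)$, which reduces to~(\ref{eq:splitting.inverse-mu}) when $\phi=1$. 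This normal form is constructed by finitely many continuous operations and so extends to the completion, producing an explicit continuous chain-level retraction onto the image of $\mu_\phi$ and a continuous chain-homotopy to the identity (mirroring the Shapiro argument). The algebraic identities proved in Section~\ref{sec:conj-classes} continue to hold on a dense subspace and hence, by continuity of both sides, everywhere. This shows that $\mu_\phi$ is a homotopy equivalence of cyclic complexes, hence in particular a quasi-isomorphism. Composing with the shuffle/AW equivalences from step~one yields the desired chain of quasi-isomorphisms, and the periodic cyclic analogue follows from the fact that all maps involved are $S$-maps and that the $S$-module formalism of Section~\ref{sec:background} carries $S$-homotopy equivalences into homotopy equivalences of periodic complexes.
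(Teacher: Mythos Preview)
Your proposal is correct and follows essentially the same route as the paper. The paper's argument preceding the proposition is terse: it notes that the algebraic embedding $\mu_\phi$ is continuous, hence extends uniquely to the completions, and asserts (via the density argument flagged in the introduction) that the extension remains a quasi-isomorphism; the $\shuffle^\natural/\AW^\natural$ equivalence is just Proposition~\ref{prop:bi-paracyclic-EZ} applied directly to the cylindrical $\C$-module $\bC^\phi(\Gamma_\phi,\cA)$, so no separate continuous-extension step is actually needed there. Your write-up supplies the explicit mechanism behind the density claim for $\mu_\phi$ (the section $\sigma:\Gamma/\Gamma_\phi\to\Gamma$ and the resulting continuous retraction/homotopy realizing Shapiro's lemma), which the paper leaves implicit, but the underlying argument is the same.
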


The mixed complex $\Tot_\bt\left(\bC^\phi(\Gamma_\phi,\cA)\right)^\natural$ can be studied exactly in the same way as in Sections~\ref{sec:finite}--\ref{sec:infinite-order}. Therefore, the main results of these sections for $C(\cA_\Gamma)_{[\phi]}$ apply \emph{mutatis mutandis} to its completion $\bC(\cA)_{[\phi]}$ by replacing the twisted cyclic space $C^\phi(\cA)$ by its completion $\bC^\phi(\cA)$. 

When $\Gamma_\phi$ is finite, by combining Proposition~\ref{prop:quasi-isomorphism-bCphiGA-bCAGphi} with Proposition~\ref{prop:functoriality-CphiAsC} and  Proposition~\ref{prop:quasi-isom-Gfinite} we arrive at the following result.

\begin{theorem}\label{thm:LCA.bHCAGphi-finite} 
Suppose that $\Gamma_\phi$ is finite and we are given a quasi-isomorphism of $\phi$-parachain complexes $\alpha: \bC^\phi_\bt(\cA) \rightarrow \sC_\bt$. 
\begin{enumerate}
\item[(1)] We have explicit quasi-isomorphisms of cyclic complexes, 
\[
  \sC^{\Gamma_\phi,\natural}_\bt  \xleftarrow{\pi_0\otimes \alpha}
 \Tot_\bt\left(\bC^\phi(\Gamma_\phi,\cA)\right)^\natural\xrightleftharpoons[\AW^\natural]{\shuffle^\natural} \Diag_\bt\left(\bC^\phi(\Gamma_\phi,\cA)\right)^\natural \xrightarrow{\mu_\phi} \bC_\bt(\cA_\Gamma)_{[\phi]}^\natural.
 \] 
Moreover, if  $\alpha$ has a quasi-inverse (resp., homotopy inverse) $\beta:  \sC_\bt \rightarrow \bC^\phi_\bt(\cA)$, then 
$  (1\otimes \beta) \iota_{\Gamma_\phi}^\natural$ is a quasi-inverse (resp., homotopy inverse) of $\pi_0\otimes \alpha$.

\item[(2)]  There are analogous statements for the corresponding periodic cyclic complexes. 

\item[(3)] This gives rise to isomorphisms, 
\begin{equation*}
\bHC(\cA_\Gamma)_{[\phi]}\simeq \HC_\bt(\sC^{\Gamma_\phi}) \quad \text{and} \quad  \bHP(\cA_\Gamma)_{[\phi]}\simeq 
\HP_\bt(\sC^{\Gamma_\phi}). 
\end{equation*}
\end{enumerate}
\end{theorem}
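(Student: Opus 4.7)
The plan is to mirror the strategy of Theorem~\ref{thm:finite.quasi-isomorphism-CAGphi} in the locally convex setting: the displayed diagram factors as a composition of three $S$-maps that are each independently known to be a quasi-isomorphism, and it will suffice to identify an explicit quasi-inverse by composing the corresponding inverses. Since $\Gamma_\phi$ is assumed finite, the homological algebra does not interact with the topology in any delicate way, so the algebraic arguments from Sections~\ref{sec:finite}--\ref{sec:CphiGA} should transpose with only formal modifications.

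First, I would invoke Proposition~\ref{prop:quasi-isomorphism-bCphiGA-bCAGphi} to obtain the right-hand portion of the diagram: the Alexander--Whitney and shuffle $S$-maps are $S$-homotopy inverses, and the embedding $\mu_\phi : \Diag_\bt(\bC^\phi(\Gamma_\phi,\cA))^\natural \to \bC_\bt(\cA_\Gamma)_{[\phi]}^\natural$ is a quasi-isomorphism. Next, since $\bC^\phi(\Gamma_\phi,\cA) = C^\phi(\Gamma_\phi)\otimes_{\Gamma_\phi} \bC^\phi(\cA)$ is the same cylindrical-complex construction of Section~\ref{sec:CphiGA} applied to the $\phi$-paracyclic $\C\Gamma_\phi$-module $\bC^\phi(\cA)$, the functoriality result Proposition~\ref{prop:functoriality-CphiAsC} applies verbatim to the given quasi-isomorphism $\alpha:\bC^\phi_\bt(\cA)\to \sC_\bt$ and shows that $1\otimes \alpha : \Tot_\bt(\bC^\phi(\Gamma_\phi,\cA))^\natural \to \Tot_\bt(C^\phi(\Gamma_\phi,\sC))^\natural$ is a quasi-isomorphism of cyclic complexes.

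The crucial step where the finite centralizer hypothesis enters is Proposition~\ref{prop:quasi-isom-Gfinite}: with $\Gamma_\phi$ finite and $\phi$ central in $\Gamma_\phi$, the $S$-map $\pi_0 \otimes 1 : \Tot_\bt(C^\phi(\Gamma_\phi,\sC))^\natural \to \sC^{\Gamma_\phi,\natural}_\bt$ is a right-inverse and $S$-homotopy left-inverse of the $S$-map $\iota_{\Gamma_\phi}^\natural$. Composing this with the previous step yields the leftmost arrow $\pi_0\otimes \alpha = (\pi_0\otimes 1)(1\otimes \alpha)$ of the diagram as a quasi-isomorphism. If $\beta$ is a quasi-inverse (resp.~homotopy inverse) of $\alpha$, then Proposition~\ref{prop:functoriality-CphiAsC} provides $1\otimes \beta$ as a quasi-inverse (resp.~homotopy inverse) of $1\otimes \alpha$, and composing with $\iota_{\Gamma_\phi}^\natural$ from Proposition~\ref{prop:quasi-isom-Gfinite} gives the explicit inverse $(1\otimes \beta)\iota_{\Gamma_\phi}^\natural$ asserted in~(1).

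For part~(2), each of the three $S$-maps above extends to the periodic complex, and the usual fact that an $S$-map which is a quasi-isomorphism (or an $S$-homotopy equivalence) induces a quasi-isomorphism (or homotopy equivalence) of the periodic complexes, as recalled in Section~\ref{sec:background}, yields the periodic version; the explicit inverses transport verbatim. Part~(3) is then immediate from (1) and (2) by passing to homology. The only potential obstacle is the verification that all of the structural operators, together with $\pi_0\otimes \alpha$, $\iota_{\Gamma_\phi}^\natural$, $\mu_\phi$, $\shuffle^\natural$, $\AW^\natural$, and the $S$-homotopies from Proposition~\ref{prop:quasi-isom-Gfinite} and Lemma~\ref{lem:homotopy-lemma}, are continuous with respect to the projective topology placed on $\bC^\phi_{p,q}(\Gamma_\phi,\cA)$. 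Because $\Gamma_\phi$ is finite, the tensor product $\otimes_{\Gamma_\phi}$ is simply a quotient by a finite group action and all averaging and antisymmetrization operators descend continuously, so the required continuity follows directly from the continuity of the twisted cyclic structure on $\bC^\phi(\cA)$.
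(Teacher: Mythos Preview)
Your proposal is correct and follows exactly the approach the paper takes: the theorem is stated immediately after the sentence ``by combining Proposition~\ref{prop:quasi-isomorphism-bCphiGA-bCAGphi} with Proposition~\ref{prop:functoriality-CphiAsC} and Proposition~\ref{prop:quasi-isom-Gfinite} we arrive at the following result,'' and your argument is precisely this composition, with the same factorization $\pi_0\otimes\alpha=(\pi_0\otimes 1)(1\otimes\alpha)$ and the same construction of the inverse. Your added remarks on continuity are a reasonable gloss on the paper's earlier assertion that the results of Sections~\ref{sec:finite}--\ref{sec:infinite-order} apply \emph{mutatis mutandis} to the completion $\bC^\phi(\cA)$.
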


When $\phi$ has finite order we can combine Proposition~\ref{prop:quasi-isomorphism-bCphiGA-bCAGphi} with Proposition~\ref{prop:functoriality-CphiAsC} and  Proposition~\ref{prop:quasi-isom-Gfinite} to get the following result.

 \begin{theorem}\label{thm:LCA.bHCAGphi-finite-order}
Let $\phi \in \Gamma$ have finite order, and suppose we are given a quasi-isomorphism of $\phi$-parachain complexes 
 $\alpha: \bC^\phi_\bt(\cA) \rightarrow \sC_\bt$. 
 \begin{enumerate}
\item[(1)] We have explicit quasi-isomorphisms of cyclic complexes, 
\begin{equation}
  \Tot_\bt\left(C^\flat(\Gamma_\phi, \sC^\phi)\right)^\natural  \xleftarrow{(\varepsilon \nu_\phi)\otimes \alpha}
 \Tot_\bt\left(\bC^\phi(\Gamma_\phi,\cA)\right)^\natural\xrightleftharpoons[\AW^\natural]{\shuffle^\natural} \Diag_\bt\left(\bC^\phi(\Gamma_\phi,\cA)\right)^\natural \xrightarrow{\mu_\phi} \bC_\bt(\cA_\Gamma)_{[\phi]}^\natural.
 \label{eq:LCA.finite-order.quasi-isom-CAGphi}
 \end{equation}
Moreover, if $\alpha$ has a quasi-inverse (resp., homotopy inverse) $\beta:  \sC_\bt \rightarrow \bC^\phi_\bt(\cA)$, then 
$ (\varepsilon \nu_\phi)^\flat \otimes \beta$ is a quasi-inverse (resp., homotopy inverse) of $(\varepsilon \nu_\phi)\otimes \alpha$. 

\item[(2)] There are analogous statements for the corresponding periodic cyclic complexes. 

\item[(3)] This provides us with isomorphisms,
\begin{equation*}
\bHC(\cA_\Gamma)_{[\phi]}\simeq \HC_\bt( \Tot(C^\flat(\Gamma_\phi,\sC^\phi))) \quad \text{and} \quad 
\bHP(\cA_\Gamma)_{[\phi]}\simeq \HP_\bt( \Tot(C^\flat(\Gamma_\phi,\sC^\phi))). 
\end{equation*}
\end{enumerate}
\end{theorem}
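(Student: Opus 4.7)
The proof would be a direct translation of the algebraic finite-order case (Theorem~\ref{thm:finite-order.HCAGphi}) to the locally convex setting. The three building blocks needed are all at hand. First, Proposition~\ref{prop:quasi-isomorphism-bCphiGA-bCAGphi} supplies the two rightmost arrows of~(\ref{eq:LCA.finite-order.quasi-isom-CAGphi}), namely the shuffle and Alexander-Whitney $S$-maps between $\Tot_\bt(\bC^\phi(\Gamma_\phi,\cA))^\natural$ and $\Diag_\bt(\bC^\phi(\Gamma_\phi,\cA))^\natural$, together with the cyclic-space quasi-isomorphism $\mu_\phi$. Second, Proposition~\ref{prop:functoriality-CphiAsC} applied to the underlying $\phi$-parachain complex of $\bC^\phi(\cA)$ and to the given $\alpha$ yields that
\begin{equation*}
 1\otimes \alpha: \Tot_\bt\bigl(\bC^\phi(\Gamma_\phi,\cA)\bigr)^\natural \longrightarrow \Tot_\bt\bigl(C^\phi(\Gamma_\phi,\sC)\bigr)^\natural
\end{equation*}
is a quasi-isomorphism of cyclic complexes. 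Third, Proposition~\ref{prop:quasi-isomCphiGsC-finite-order}, applied to the finite-order central element $\phi$ and to the $\phi$-parachain complex $\sC$, produces the $S$-homotopy equivalence
\begin{equation*}
 (\varepsilon \nu_\phi)\otimes 1:  \Tot_\bt\bigl(C^\phi(\Gamma_\phi,\sC)\bigr)^\natural \longrightarrow \Tot_\bt\bigl(C^\flat(\Gamma_\phi,\sC^\phi)\bigr)^\natural,
\end{equation*}
together with an explicit $S$-homotopy inverse $(\varepsilon \nu_\phi)^\flat \otimes 1$. Composing these three ingredients gives the leftward quasi-isomorphism $(\varepsilon \nu_\phi)\otimes \alpha$ and completes the chain~(\ref{eq:LCA.finite-order.quasi-isom-CAGphi}).

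For the inverse statement, if $\alpha$ admits a quasi-inverse (resp.\ homotopy inverse) $\beta:\sC_\bt \rightarrow \bC^\phi_\bt(\cA)$, then applying Proposition~\ref{prop:functoriality-CphiAsC} to $\beta$ shows that $1\otimes \beta$ is a quasi-inverse (resp.\ homotopy inverse) of $1\otimes \alpha$. Composing with the explicit $S$-homotopy inverse $(\varepsilon \nu_\phi)^\flat\otimes 1$ from Proposition~\ref{prop:quasi-isomCphiGsC-finite-order} produces the asserted quasi-inverse $(\varepsilon \nu_\phi)^\flat\otimes \beta$ of $(\varepsilon \nu_\phi)\otimes \alpha$. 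Part~(2), concerning periodic cyclic complexes, follows automatically: every arrow in the diagram is an $S$-map, and an $S$-map which is a quasi-isomorphism at the cyclic level extends to a quasi-isomorphism of periodic cyclic complexes. Part~(3) is then obtained by taking homology of~(\ref{eq:LCA.finite-order.quasi-isom-CAGphi}) and its periodic analogue.

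The only step requiring any care is verifying that the constructions of Section~\ref{sec:finite-order} remain valid in the locally convex setting. This is transparent, since the operators $\varepsilon$, $\nu_\phi$ and the associated perturbation-lemma homotopies all live on the group-homological factor $C^\phi(\Gamma_\phi)$; the maps $(\varepsilon \nu_\phi)\otimes 1$ and $(\varepsilon \nu_\phi)^\flat \otimes 1$ therefore act only through the first tensor factor and are trivially continuous on $\bC^\phi_{\bt,\bt}(\Gamma_\phi,\cA)$. The main (mild) obstacle is thus purely bookkeeping, namely checking that each of Propositions~\ref{prop:functoriality-CphiAsC}, \ref{prop:quasi-isomorphism-bCphiGA-bCAGphi}, and~\ref{prop:quasi-isomCphiGsC-finite-order} applies \emph{verbatim} once $C^\phi(\cA)$ is replaced throughout by its projective closure $\bC^\phi(\cA)$; no new homological algebra input is required.
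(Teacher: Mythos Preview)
Your proof is correct and follows exactly the paper's approach: the paper states just before Theorem~\ref{thm:LCA.bHCAGphi-finite-order} that it is obtained by combining Proposition~\ref{prop:quasi-isomorphism-bCphiGA-bCAGphi} with Proposition~\ref{prop:functoriality-CphiAsC} and the finite-order proposition (the paper cites Proposition~\ref{prop:quasi-isom-Gfinite}, which is almost certainly a typographical slip for Proposition~\ref{prop:quasi-isomCphiGsC-finite-order}, as you correctly use). Your additional remark that the group-side operators $\varepsilon$, $\nu_\phi$ act only on the $C^\phi(\Gamma_\phi)$ factor, and hence pose no continuity issue, makes explicit what the paper encapsulates in its blanket statement that the algebraic results apply \emph{mutatis mutandis} upon replacing $C^\phi(\cA)$ by $\bC^\phi(\cA)$.
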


In the same way as with Corollary~\ref{cor:finite-order.HCAGphi-spectral-sequences}, from Theorem~\ref{thm:LCA.bHCAGphi-finite-order} we deduce the following statement.

\begin{corollary}
 Suppose that the assumptions of Theorem~\ref{thm:LCA.bHCAGphi-finite-order} hold. 
 \begin{enumerate}
\item[(1)] The filtrations by rows and columns of $\Tot_\bt(C^\flat(\Gamma_\phi)^\natural\otimes_{\Gamma_\phi} \sC^\phi)=  \Tot_\bt\left(C^\flat(\Gamma_\phi, \sC^\phi)\right)^\natural$ and the quasi-isomorphisms~(\ref{eq:LCA.finite-order.quasi-isom-CAGphi}) give rise to spectral sequences, 
\begin{gather*}
  {~}^{I}\! E^{2}_{p,q}=\HC_p\left(H_q(\Gamma_\phi, \sC^\phi)\right) \Longrightarrow 
        \bHC_{p+q}( \cA_{\Gamma})_{[\phi]},  
%        \label{eq:spectral-sequence-finite-order1}
        \\
  {~}^{I\!I}\! E^{2}_{p,q}=H_p(\Gamma_\phi, \HC_q(\sC^\phi)) \Longrightarrow 
        \bHC_{p+q}( \cA_{\Gamma})_{[\phi]}. 
%       \label{eq:spectral-sequence-finite-order2}       
\end{gather*}
Here $H_q(\Gamma_\phi, \sC^\phi)$ is the mixed complex $(H_q(\Gamma_\phi, \sC^\phi_\bt), b,B)$.

\item[(2)] The filtration by columns of $\Tot_\bt(C^\flat(\Gamma_\phi)^\natural\otimes_{\Gamma_\phi} \sC^\phi)=  \Tot_\bt\left(C^\flat(\Gamma_\phi, \sC^\phi)\right)^\natural$ and the quasi-isomorphisms~(\ref{eq:LCA.finite-order.quasi-isom-CAGphi}) give rise to a spectral sequence, 
        \begin{equation*}
{~}^{I\!I\!I}\! E^{2}_{p,q} \Longrightarrow 
        \bHC_{p+q}( \cA_{\Gamma})_{[\phi]}, \quad \text{where}\ {~}^{I}\! E^{2}_{p,q}=H_p(\Gamma_\phi, H_q(\sC^\phi))\oplus H_p(\Gamma_\phi, H_q(\sC^\phi)) \oplus \cdots,  
%                \label{eq:spectral-sequence-finite-order3}
\end{equation*}
and $H_\bt(\sC^\phi)$ is the ordinary homology of $\sC^\phi$. 
\end{enumerate}
\end{corollary}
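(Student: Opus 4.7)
My proof plan is to reduce the corollary to the machinery already developed for the algebraic case in Corollary~\ref{cor:finite-order.HCAGphi-spectral-sequences}. First I invoke Theorem~\ref{thm:LCA.bHCAGphi-finite-order}, whose explicit quasi-isomorphisms~(\ref{eq:LCA.finite-order.quasi-isom-CAGphi}) identify the cyclic complexes $\bC(\cA_\Gamma)_{[\phi]}^\natural$ and $\Tot_\bt(C^\flat(\Gamma_\phi,\sC^\phi))^\natural$; passing to homology gives the canonical isomorphism $\bHC_\bt(\cA_\Gamma)_{[\phi]}\simeq \HC_\bt(\Tot(C^\flat(\Gamma_\phi,\sC^\phi)))$. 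It therefore suffices to construct spectral sequences on $\Tot(C^\flat(\Gamma_\phi,\sC^\phi))^\natural$ with the prescribed $E^2$-pages and then transport them along this isomorphism; the convergence to $\bHC_{p+q}(\cA_\Gamma)_{[\phi]}$ follows automatically.

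Next I observe that the mixed bicomplex $C^\flat(\Gamma_\phi,\sC^\phi)=C^\flat(\Gamma_\phi)\otimes_{\Gamma_\phi}\sC^\phi$ is a cylindrical complex in the degenerate sense $\wT=T=1$. The construction of Section~\ref{sec:triangular-S-module} then produces two triangular $S$-modules, the horizontal $C^\wsigma=C^\flat(\Gamma_\phi)^\natural\otimes_{\Gamma_\phi}\sC^\phi$ and the vertical $C^\sigma=C^\flat(\Gamma_\phi)\otimes_{\Gamma_\phi}\sC^{\phi,\natural}$, whose total $S$-modules both coincide with $\Tot(C^\flat(\Gamma_\phi,\sC^\phi))^\natural$. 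Part~(1) follows from Proposition~\ref{prop:triang-S-mod.spectral-sequences-cyclindrical} applied to $\Tot(C^\sigma)$: the row filtration yields $E^2_{p,q}=\HC_p(H_q(\Gamma_\phi,\sC^\phi))$ and the column filtration yields $E^2_{p,q}=H_p(\Gamma_\phi,\HC_q(\sC^\phi))$. These $E^2$-identifications are formal consequences of the exactness of the functors $C_p(\Gamma_\phi)\otimes_{\Gamma_\phi}-$ recalled at the end of Section~\ref{sec:CphiGA}, which lets one commute $b$-homology (respectively $(b+BS)$-homology) across the tensor product with $C_p(\Gamma_\phi)$ and reorganise the result as group homology of $\Gamma_\phi$ with the appropriate mixed-complex or cyclic coefficients.

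For part~(2), I filter $\Tot(C^\wsigma)$ by columns, i.e.\ by the outer index $p$ of the cyclic complex $C_\bt(\Gamma_\phi)^\natural$. The correction term $BS$ in the total differential $d^\dagger=\partial+(-1)^p(b+BS)$ strictly decreases this index, so on the $E^0$-page only the $b$-differential of $\sC^\phi$ contributes; exactness of $C_p(\Gamma_\phi)^\natural\otimes_{\Gamma_\phi}-$ then yields $E^1_{p,q}=C_p(\Gamma_\phi)^\natural\otimes_{\Gamma_\phi}H_q(\sC^\phi)$ with $d^1$ the plain group differential $\partial$. Its $\partial$-homology column by column delivers the announced accumulated $E^2$-page ${^{III}}\!E^2_{p,q}$. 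Convergence of this spectral sequence to $\HC_{p+q}(\Tot(C^\flat(\Gamma_\phi,\sC^\phi)))$, and hence to $\bHC_{p+q}(\cA_\Gamma)_{[\phi]}$ via step one, is automatic because the filtration is bounded below and exhaustive on each degree.

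No substantive obstacle is expected: Theorem~\ref{thm:LCA.bHCAGphi-finite-order} has already absorbed every topological subtlety associated with the projective tensor product completion, so once one sits on the level of the mixed bicomplex $C^\flat(\Gamma_\phi,\sC^\phi)$ the three spectral sequences are produced by the same triangular $S$-module formalism employed in the algebraic case. The only careful point is to verify the shapes of the three $E^2$-pages, which reduces to the exactness of $C_p(\Gamma_\phi)\otimes_{\Gamma_\phi}-$ together with the naturality of group homology with respect to the $(b,B)$-operations inherited from $\sC^\phi$.
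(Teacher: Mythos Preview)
Your proposal is correct and follows essentially the same route as the paper: invoke Theorem~\ref{thm:LCA.bHCAGphi-finite-order} to pass to $\Tot(C^\flat(\Gamma_\phi,\sC^\phi))^\natural$, then read off the three spectral sequences from the horizontal and vertical triangular $S$-modules $C^{\wsigma}$ and $C^{\sigma}$ exactly as in the discussion preceding Corollary~\ref{cor:finite-order.HCAGphi-spectral-sequences}. One small imprecision: Proposition~\ref{prop:triang-S-mod.spectral-sequences-cyclindrical} as stated only records the row filtration of $\Tot(C^\sigma)$ (giving ${}^{I}\!E^2$) and the column filtration of $\Tot(C^{\wsigma})$ (giving ${}^{III}\!E^2$); the spectral sequence ${}^{II}\!E^2=H_p(\Gamma_\phi,\HC_q(\sC^\phi))$ comes instead from the observation, made in the remark following Corollary~\ref{cor:finite-order.HCAGphi-spectral-sequences}, that $\Tot(C^\sigma)$ is the total complex of the ordinary double complex $(C_\bt(\Gamma_\phi),\partial)\otimes_{\Gamma_\phi}(\sC^{\phi,\natural}_\bt,b+BS)$, whose column filtration is a genuine chain-complex filtration. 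Your justification via exactness of $C_p(\Gamma_\phi)\otimes_{\Gamma_\phi}-$ is exactly what is needed there, so the argument stands once the attribution is adjusted.
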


Finally, when $\phi$ has infinite order, we shall say that the action of $\phi$ on $\cA$ is \emph{topologically good} when $\bC^\phi(\cA)$ is a good parachain complex. Bearing this in mind, by combining Proposition~\ref{prop:quasi-isomorphism-bCphiGA-bCAGphi} with Proposition~\ref{prop:functoriality-CphiAsC} and  Proposition~\ref{prop:infinite-order.quasi-isom-sC} we arrive at the following result.

\begin{theorem}\label{thm:LCA.infinite-good-action}
 Let $\phi \in \Gamma$ have infinite order, and set $\overline{\Gamma}_\phi = \Gamma_\phi/ \brak\phi$. Assume that the action of $\phi$ on $\cA$ is topologically good, and let 
 $\alpha: C^\phi_\bt(\cA)\rightarrow \sC_\bt$ be a parachain quasi-isomorphism, where $\sC$ is a $\phi$-invariant mixed complex. Then we have the following quasi-isomorphisms, 
 \begin{equation}
 \Tot_\bt (C^\sigma(\overline{\Gamma}_\phi ,\sC))  \xleftarrow{\theta(1\otimes \alpha)}
 \Tot_\bt\left(\bC^\phi(\Gamma_\phi,\cA)\right)^\natural\xrightleftharpoons[\AW^\natural]{\shuffle^\natural} \Diag_\bt\left(C^\phi(\Gamma_\phi,\cA)\right)^\natural \xrightarrow{\mu_\phi} \bC_\bt(\cA_\Gamma)_{[\phi]}^\natural.
 \label{eq:LCA.infinite.quasi-isom-good-action}
\end{equation}
 This gives rise to an isomorphism, 
\begin{equation*}
\bHC_\bt(\cA)_{[\phi]}\simeq H_\bt(C^\sigma(\overline{\Gamma}_\phi ,\sC)). 
\end{equation*}
Under this isomorphism, the periodicity operator of $\bHC_\bt(\cA)_{[\phi]}$ is given by the cap product,
\begin{equation*}
e_\phi \frown - : H_\bt(\Tot(C^\sigma(\overline{\Gamma}_\phi ,\sC))) \longrightarrow H_{\bt-2}(\Tot(C^\sigma(\overline{\Gamma}_\phi ,\sC))),
\end{equation*}
where $e_\phi\in H^2(\OG_\phi,k)$ is the Euler class of the extension $1\rightarrow \brak\phi \rightarrow \Gamma_\phi \rightarrow    \overline{\Gamma}_\phi\rightarrow 1$. 
\end{theorem}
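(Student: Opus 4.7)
The plan is to imitate the proof of Theorem~\ref{thm:infinite.HCAGphi} verbatim, replacing its algebraic ingredients by their topological counterparts. First I would invoke Proposition~\ref{prop:quasi-isomorphism-bCphiGA-bCAGphi} to obtain the two rightmost quasi-isomorphisms $\shuffle^\natural$, $\AW^\natural$, and $\mu_\phi$ appearing in~(\ref{eq:LCA.infinite.quasi-isom-good-action}); these are the continuous extensions of their algebraic counterparts established in Proposition~\ref{prop:quasi-isomorphism-CphiGA-CAGphi}. The task then reduces to constructing the leftmost quasi-isomorphism $\theta(1\otimes \alpha)$ and checking that it matches the Connes periodicity operator with the cap product by the Euler class.

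For the factor $1\otimes \alpha$, I would apply the functoriality Proposition~\ref{prop:functoriality-CphiAsC} to the $\phi$-parachain quasi-isomorphism $\alpha$, viewing $\bC^\phi(\cA)$ simply as a $\phi$-paracyclic $\C\Gamma_\phi$-module. The proof of that proposition depends only on the exactness of the purely algebraic functor $C_m(\Gamma_\phi)\otimes_{\Gamma_\phi} -$, which holds because $C_m(\Gamma_\phi)$ is free as a right $\C\Gamma_\phi$-module. Hence exactness is insensitive to the locally convex structure carried by $\bC^\phi(\cA)$, and $1\otimes \alpha$ yields a mixed-complex quasi-isomorphism
$\Tot_\bt(\bC^\phi(\Gamma_\phi,\cA))\to \Tot_\bt(C^\phi(\Gamma_\phi,\sC))$ and therefore an $S$-map on the corresponding cyclic complexes. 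Since $\sC$ is a $\phi$-invariant mixed complex, Proposition~\ref{prop:infinite-order.quasi-isom-sC} then provides the chain quasi-isomorphism
$\theta:\Tot_\bt(C^\phi(\Gamma_\phi,\sC))^\natural\to\Tot_\bt(C^\sigma(\overline{\Gamma}_\phi,\sC))$. Composing these gives the desired chain of quasi-isomorphisms and the isomorphism $\bHC_\bt(\cA_\Gamma)_{[\phi]}\simeq H_\bt(\Tot(C^\sigma(\overline{\Gamma}_\phi,\sC)))$.

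To identify the periodicity operator, I would use the explicit chain-level identity of Proposition~\ref{prop:infinite-order.quasi-isom-sC} relating $\theta S$ and $(u_\phi\frown -)$ up to an explicit $d^\dagger$-boundary involving the homotopy $h_\phi\otimes 1$, where $u_\phi \in C^2(\OG_\phi,k)$ is any cocycle representing $e_\phi$. Passing to homology, this identifies the Connes $S$-operator on $\bHC_\bt(\cA_\Gamma)_{[\phi]}$ with cap product by $e_\phi$ on $H_\bt(\Tot(C^\sigma(\overline{\Gamma}_\phi,\sC)))$, as claimed.

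I do not foresee a serious obstacle: the whole argument is the \emph{mutatis mutandis} topological analogue of Theorem~\ref{thm:infinite.HCAGphi}, and all the real homological work was carried out in Section~\ref{sec:infinite-order}. The only delicate point is checking that Proposition~\ref{prop:functoriality-CphiAsC} still applies in the locally convex setting, but as noted above this is immediate since the relevant tensor product is over the purely algebraic group ring $\C\Gamma_\phi$ with a free right module, so no topological completion subtlety intervenes. The ``topologically good'' hypothesis is then exactly what is needed to ensure that parachain quasi-isomorphisms $\alpha$ with $\phi$-invariant mixed target $\sC$ actually exist (one may in fact take $\sC=\bC^\phi(\cA)_\phi$ and $\alpha$ the canonical projection).
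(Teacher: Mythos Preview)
Your proposal is correct and matches the paper's own approach exactly: the paper derives this theorem simply ``by combining Proposition~\ref{prop:quasi-isomorphism-bCphiGA-bCAGphi} with Proposition~\ref{prop:functoriality-CphiAsC} and Proposition~\ref{prop:infinite-order.quasi-isom-sC},'' after remarking that the results of Sections~\ref{sec:finite}--\ref{sec:infinite-order} apply \emph{mutatis mutandis} with $\bC^\phi(\cA)$ in place of $C^\phi(\cA)$. Your elaboration on why Proposition~\ref{prop:functoriality-CphiAsC} carries over (freeness of $C_m(\Gamma_\phi)$ over $\C\Gamma_\phi$) and your use of the chain-level identity in Proposition~\ref{prop:infinite-order.quasi-isom-sC} to identify the periodicity operator are precisely the details the paper leaves implicit.
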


As with Corollary~\ref{cor:infinite.FT-spectral-sequence-good}, we obtain following consequence of Theorem~\ref{thm:LCA.infinite-good-action}.

\begin{corollary}\label{cor:LCA.infinite-spectral-sequence-good}
 Under the assumptions of Theorem~\ref{thm:LCA.infinite-good-action}, the quasi-isomorphisms~(\ref{eq:LCA.infinite.quasi-isom-good-action}) and the filtration by columns of $\Tot(C^\sigma(\overline{\Gamma}_\phi ,\sC))$ give rise to a spectral sequence, 
  \begin{equation*}
E^2_{p,q}= H_p(\OG_\phi, H_q(\sC)) \Longrightarrow \bHC_{p+q}(\cA_\Gamma)_{[\phi]},
%\label{eq:LCA.infinite.FT-spectral-sequence-good}
\end{equation*}
where $H_\bt(\sC)$ is the ordinary homology of $\sC$. If  the $b$-differential of $\sC$ is zero, then we have $E^2_{p,q}= H_p(\OG_\phi, \sC_q)$ and the 
$E^2$-differential is $(-1)^pB(u_\phi \frown -): H_p(\OG_\phi, \sC_q) \longrightarrow H_{p-2}(\OG_\phi, \sC_{q+1})$. 
\end{corollary}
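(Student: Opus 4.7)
My plan is to combine Theorem~\ref{thm:LCA.infinite-good-action} with the horizontal spectral sequence of Proposition~\ref{prop:triang-S-mod.horizontal-SC} applied to the triangular $S$-module $C^\sigma(\OG_\phi, \sC)$. This is the topological analog of Corollary~\ref{cor:infinite.FT-spectral-sequence-good}; once the cyclic homology $\bHC_\bt(\cA_\Gamma)_{[\phi]}$ is identified with $H_\bt(\Tot(C^\sigma(\OG_\phi, \sC)))$ via the quasi-isomorphisms~(\ref{eq:LCA.infinite.quasi-isom-good-action}), no further topological input is needed and the rest of the proof takes place entirely in the framework of triangular $S$-modules.

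First I would check that $C^\sigma(\OG_\phi, \sC)$ is a triangular $S$-module in the sense of Definition~\ref{def:horizontal-triangular-S-module}: its horizontal structure is the $S$-module $(C_\bt(\OG_\phi), \partial, u_\phi \frown -)$, its vertical structure is the $\phi$-invariant mixed complex $(\sC_\bt, b, B)$, and the compatibility $\partial^2 + (bB+Bb)S = 0$ is trivial since both summands vanish. Proposition~\ref{prop:triang-S-mod.horizontal-SC} then supplies a convergent spectral sequence $E^2_{p,q} = H_p^h H_q^v \Rightarrow H_{p+q}(\Tot(C^\sigma(\OG_\phi, \sC)))$ from the filtration by columns; convergence is automatic since this filtration starts at $p=0$ and is finite in each total degree. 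Combined with Theorem~\ref{thm:LCA.infinite-good-action} the abutment becomes $\bHC_{p+q}(\cA_\Gamma)_{[\phi]}$.

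Next I would compute the $E^2$-term. At column $p$ the vertical chain complex is $(C_p(\OG_\phi)\otimes_{\OG_\phi} \sC_\bt, b)$; since $C_p(\OG_\phi)$ is free as a $k\OG_\phi$-module, the functor $C_p(\OG_\phi)\otimes_{\OG_\phi} -$ is exact and yields $H_q^v = C_p(\OG_\phi)\otimes_{\OG_\phi} H_q(\sC)$. The induced horizontal differential on $E^1$ is $\partial \otimes 1$, whose $p$-th homology is by definition $H_p(\OG_\phi, H_q(\sC))$, establishing the first assertion. When $b = 0$ on $\sC$ this specializes to $E^2_{p,q} = H_p(\OG_\phi, \sC_q)$, and only the $E^2$-differential remains to be identified. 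From the total differential $d^\dagger = \partial + (-1)^p B(u_\phi\frown-)$ on $C_{p,q}$, the second summand lowers $p$ by $2$ and raises $q$ by $1$; since $\partial$ has already been consumed as the $E^1$-differential, this piece provides precisely the $E^2$-differential. The main care is required here, in tracking the filtration shifts and the sign $(-1)^p$ dictated by the conventions of Section~\ref{sec:triangular-S-module}; modulo this routine bookkeeping, the corollary is a direct consequence of the machinery already in place.
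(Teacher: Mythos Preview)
Your proposal is correct and follows exactly the approach implicit in the paper. The paper derives this corollary by the same route as Corollary~\ref{cor:infinite.FT-spectral-sequence-good}, namely by combining Theorem~\ref{thm:LCA.infinite-good-action} with Proposition~\ref{prop:triang-S-mod.horizontal-SC} applied to the triangular $S$-module $C^\sigma(\OG_\phi,\sC)$; you have simply spelled out the details of that combination, including the computation of $E^1$ and $E^2$ and the identification of the $d_2$-differential when $b=0$.
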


In the case of general infinite order actions, in the same way as with Theorem~\ref{thm:infinite.HCAGphi-FT-sequence-general} we obtain the following result.

\begin{theorem}
 Let $\phi \in \Gamma$ have infinite order, and suppose we are given a quasi-isomorphism of parachain complexes 
 $\alpha: C^\phi_\bt(\cA)\rightarrow \sC_\bt$, where $\sC$ is a $\phi$-parachain complex. Then we have the following quasi-isomorphisms of cyclic complexes, 
  \begin{equation*}
\Tot_\bt\left(C^\phi(\Gamma_\phi,\sC)\right)^\natural  \xleftarrow{1\otimes \alpha}
 \Tot_\bt\left(\bC^\phi(\Gamma_\phi,\cA)\right)^\natural\xrightleftharpoons[\AW^\natural]{\shuffle^\natural} \Diag_\bt\left(\bC^\phi(\Gamma_\phi,\cA)\right)^\natural \xrightarrow{\mu_\phi} \bC_\bt(\cA_\Gamma)_{[\phi]}^\natural.
% \label{eq:LCA.infinite.quasi-isom-CAG-general} 
\end{equation*}
These quasi-isomorphisms and the filtration by columns of $\Tot_\bt(C^\phi(\Gamma_\phi)^\natural\otimes_{\Gamma_\phi} \sC)= \Tot_\bt(C^\phi(\Gamma_\phi)\otimes_{\Gamma_\phi} \sC)^\natural$ give rise to a spectral sequence, 
\begin{equation*}
E^2_{p,q}=H_p(\overline{\Gamma}_\phi,H_q(\sC)) \Longrightarrow \bHC_{p+q}(\cA_\Gamma)_{[\phi]}. 
%\label{eq:infinite.FT-spectral-sequence}
\end{equation*}
\end{theorem}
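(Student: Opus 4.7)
The plan is to mirror the proof of Theorem~\ref{thm:infinite.HCAGphi-FT-sequence-general} step by step, using the locally convex replacements established in Section~\ref{sec:LCA}. First I would observe that the two rightmost quasi-isomorphisms of cyclic complexes come directly from Proposition~\ref{prop:quasi-isomorphism-bCphiGA-bCAGphi}: the Alexander--Whitney and shuffle $S$-maps $\shuffle^\natural$ and $\AW^\natural$ between $\Tot(\bC^\phi(\Gamma_\phi,\cA))^\natural$ and $\Diag(\bC^\phi(\Gamma_\phi,\cA))^\natural$ are continuous and $S$-homotopy inverses, and $\mu_\phi$ extends by continuity to a cyclic-space quasi-isomorphism between $\Diag(\bC^\phi(\Gamma_\phi,\cA))^\natural$ and $\bC(\cA_\Gamma)_{[\phi]}^\natural$.

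Next I would handle the leftmost arrow $1\otimes\alpha$. Since $\alpha:\bC^\phi_\bt(\cA)\to \sC_\bt$ is a quasi-isomorphism of $\phi$-parachain complexes (viewed as $\C\Gamma_\phi$-modules where the action on $\bC^\phi(\cA)$ factors through $\Gamma_\phi$ acting as continuous automorphisms), Proposition~\ref{prop:functoriality-CphiAsC} applies verbatim to the tensor product over $\Gamma_\phi$ with the paracyclic module $C^\phi(\Gamma_\phi)$. This gives that $1\otimes \alpha:\Tot(\bC^\phi(\Gamma_\phi,\cA))^\natural\to\Tot(C^\phi(\Gamma_\phi,\sC))^\natural$ is a quasi-isomorphism of cyclic complexes. (Here one uses that $C_m(\Gamma_\phi)\otimes_{\Gamma_\phi}-$ is an exact functor, exactly as in Section~\ref{sec:CphiGA}.)

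For the spectral sequence, I would invoke Proposition~\ref{prop:triang-S-mod.spectral-sequences-cyclindrical} applied to the cylindrical complex $C^\phi(\Gamma_\phi,\sC)$. Its filtration by columns on the horizontal triangular $S$-module $C^\phi(\Gamma_\phi,\sC)^{\widebar{\sigma}}$ yields
\begin{equation*}
E^2_{p,q}=\HC_p\bigl(H_q^v(C^\phi(\Gamma_\phi,\sC))\bigr)\Longrightarrow \HC_{p+q}(\Tot(C^\phi(\Gamma_\phi,\sC))).
\end{equation*}
Since taking the $b$-homology commutes with tensoring by the free $\C\Gamma_\phi$-modules $C_p(\Gamma_\phi)$, the vertical mixed complex $H_q^v(C^\phi(\Gamma_\phi,\sC))$ is naturally identified with $C^\phi(\Gamma_\phi,H_q(\sC))$. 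The relation $bB+Bb=1-\phi^{-1}$ on $\sC$ forces $\phi^{-1}$ to act as the identity on $H_q(\sC)$, so $H_q(\sC)$ is a $\phi$-invariant $\C\Gamma_\phi$-module. Lemma~\ref{lem:homotopy-inverses-sM} then gives $\HC_p(C^\phi(\Gamma_\phi,H_q(\sC)))\simeq H_p(\overline{\Gamma}_\phi,H_q(\sC))$, and combining this identification with the quasi-isomorphisms of the first two steps produces the announced spectral sequence converging to $\bHC_{p+q}(\cA_\Gamma)_{[\phi]}$.

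The main technical point, and the only place where the locally convex setting requires real attention rather than formal adaptation, is the first step: one must verify that the operators defining $\shuffle^\natural$, $\AW^\natural$ and $\mu_\phi$ are continuous for the projective tensor topology and that the bi-paracyclic Eilenberg--Zilber homotopies of Proposition~\ref{prop:bi-paracyclic-EZ} extend to $\bC^\phi(\Gamma_\phi,\cA)$. This is exactly what was promised in Section~\ref{sec:LCA} via a density/continuity argument, so once granted, the remainder of the argument is purely algebraic and runs parallel to the proof of Theorem~\ref{thm:infinite.HCAGphi-FT-sequence-general}.
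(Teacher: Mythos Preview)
Your proposal is correct and follows exactly the paper's approach: the paper simply states that this theorem is obtained ``in the same way as with Theorem~\ref{thm:infinite.HCAGphi-FT-sequence-general}'', and your plan reproduces that sketch step by step, replacing Proposition~\ref{prop:quasi-isomorphism-CphiGA-CAGphi} by its locally convex analogue Proposition~\ref{prop:quasi-isomorphism-bCphiGA-bCAGphi}, then invoking Proposition~\ref{prop:functoriality-CphiAsC}, Proposition~\ref{prop:triang-S-mod.spectral-sequences-cyclindrical}, and Lemma~\ref{lem:homotopy-inverses-sM} exactly as in the algebraic case. Your added observation that $bB+Bb=1-\phi^{-1}$ forces $\phi$ to act trivially on $H_q(\sC)$ is the reason Lemma~\ref{lem:homotopy-inverses-sM} applies, and is implicit in the paper's sketch.
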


Finally, specializing Proposition~\ref{prop:infinite.action-sC} to $\sC=\bC^\phi(\cA)$ and using Proposition~\ref{prop:quasi-isomorphism-bCphiGA-bCAGphi} we obtain the following statement.

\begin{theorem}\label{thm:LCA.infinite.action-HC}
 Let $\phi \in \Gamma$ have infinite order, and set $\overline{\Gamma}_\phi = \Gamma_\phi/ \brak\phi$.
 \begin{enumerate}
\item[(1)] The quasi-isomorphisms~(\ref{eq:LCA.quasi-isomorphism-CphiGA-CAGphi}) and the bilinear map~(\ref{eq:infinite.action}) for $\sC=\bC^\phi(\cA)$ give rise to an action of the cohomology ring $H^\bt(\OG_\phi,\C)$ on the cyclic homology $\bHC_\bt(\cA_\Gamma)_{[\phi]}$.

\item[(2)] The periodicity operator of $\bHC_\bt(\cA_\Gamma)_{[\phi]}$  is given by  the action of the Euler class $e_\phi$. In particular, $\bHP_\bt(\cA_\Gamma)_{[\phi]}=0$ whenever $e_\phi$ is nilpotent in $H^\bt(\Gamma_\phi,\C)$. 
\end{enumerate}
\end{theorem}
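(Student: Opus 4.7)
The plan is to obtain Theorem~\ref{thm:LCA.infinite.action-HC} by specializing Proposition~\ref{prop:infinite.action-sC} to $\sC = \bC^\phi(\cA)$ and then transporting the resulting action through the quasi-isomorphisms of Proposition~\ref{prop:quasi-isomorphism-bCphiGA-bCAGphi}. This is the exact analogue of the argument that deduces Theorem~\ref{thm:infinite.action-HC} from Proposition~\ref{prop:infinite.action-sC}; all one needs to check is that the pertinent algebraic constructions pass through the completion from $C^\phi(\cA)$ to $\bC^\phi(\cA)$.

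First I would note that $\bC^\phi(\cA)=(\bC_\bt(\cA), b_\phi, B_\phi)$ is a $\phi$-parachain complex of $\C\Gamma_\phi$-modules: the structural operators of the twisted paracyclic module $C^\phi(\cA)$ were extended continuously to $\bC_\bt(\cA)$ at the start of Section~\ref{sec:LCA}, and $\phi$ is central in $\Gamma_\phi$. Applying Proposition~\ref{prop:infinite.action-sC} with $\Gamma_\phi$ in place of $\Gamma$ and $\sC = \bC^\phi(\cA)$ then produces, via the bilinear map~(\ref{eq:infinite.action}), an associative graded action of $H^\bt(\OG_\phi,\C)$ on $\HC_\bt(\Tot(C^\phi(\Gamma_\phi,\bC^\phi(\cA))))$, under which the action of the Euler class $e_\phi\in H^2(\OG_\phi,\C)$ of the extension $1\rightarrow \brak\phi\rightarrow \Gamma_\phi\rightarrow \OG_\phi\rightarrow 1$ coincides with the periodicity operator. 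By the very definition given just before Proposition~\ref{prop:quasi-isomorphism-bCphiGA-bCAGphi}, the complex $\Tot(C^\phi(\Gamma_\phi,\bC^\phi(\cA)))$ is nothing but $\Tot(\bC^\phi(\Gamma_\phi,\cA))$.

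Next, Proposition~\ref{prop:quasi-isomorphism-bCphiGA-bCAGphi} gives a zig-zag of quasi-isomorphisms of cyclic complexes
\begin{equation*}
\Tot_\bt\left(\bC^\phi(\Gamma_\phi,\cA)\right)^\natural\xrightleftharpoons[\AW^\natural]{\shuffle^\natural} \Diag_\bt\left(\bC^\phi(\Gamma_\phi,\cA)\right)^\natural \xrightarrow{\mu_\phi} \bC_\bt(\cA_\Gamma)_{[\phi]}^\natural,
\end{equation*}
and all three arrows are $S$-maps, hence induce isomorphisms of cyclic homologies that intertwine the periodicity operators. Transporting the $H^\bt(\OG_\phi,\C)$-action across the resulting isomorphism $\HC_\bt(\Tot(\bC^\phi(\Gamma_\phi,\cA)))\simeq \bHC_\bt(\cA_\Gamma)_{[\phi]}$ therefore produces an associative graded action on the right-hand side in which $e_\phi$ still acts as the periodicity operator. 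This proves parts (1) and (2), except for the vanishing statement. For the latter, if $e_\phi^N=0$ in $H^\bt(\OG_\phi,\C)$ for some $N$, then $S^N$ acts as zero on $\bHC_\bt(\cA_\Gamma)_{[\phi]}$; since $\bHP_\bt(\cA_\Gamma)_{[\phi]}$ is the inverse limit $\varprojlim_S$, it necessarily vanishes.

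There is no serious obstacle: the argument is a straightforward transport through quasi-isomorphisms, with the only non-trivial input being the chain-level construction of $\triangleright$ already carried out in Proposition~\ref{prop:infinite.action-sC}. The one point that merits explicit verification is that all continuity issues are benign, i.e., the continuous extensions of the paracyclic operators to $\bC^\phi(\cA)$ are compatible with the diagonal map $\delta$, the bi-paracyclic Alexander-Whitney map $\AW^\natural$, the antisymmetrization $\varepsilon$, and the projection $\wpi$; each of these is manifestly continuous in the tensor product topology placed on $\bC_{p,q}^\phi(\Gamma_\phi,\cA)$, so the bilinear map~(\ref{eq:infinite.action}) goes through without modification in the locally convex setting.
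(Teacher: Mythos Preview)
Your proposal is correct and follows exactly the approach the paper takes: the paper's entire proof is the one-line remark preceding the theorem, namely that one specializes Proposition~\ref{prop:infinite.action-sC} to $\sC=\bC^\phi(\cA)$ and uses Proposition~\ref{prop:quasi-isomorphism-bCphiGA-bCAGphi}. Your write-up simply fleshes out this derivation with the transport through the $S$-maps, the vanishing of $\bHP$ under nilpotence of $e_\phi$, and the continuity check, all of which are routine.
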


\section{Equivariant Cohomology and Mixed Equivariant Homology}\label{sec:equivariant} 
Assume that $\Gamma$ acts by (smooth) diffeomorphisms on a manifold $M$.  
Let $\Omega(M)=(\Omega^\bt(M), d)$ be the de Rham complex of differential forms on $M$. Recall that the \emph{equivariant cohomology} $H^\bt_\Gamma(M)$ of the $\Gamma$-manifold $M$ is the cohomology of the total complex of Bott's cochain bicomplex,
\begin{equation*}
C_\Gamma(M)=\left(C^{\bt,\bt}_\Gamma(M), \partial,d\right), \qquad \text{where}\ C^{p,q}_\Gamma(M) =C^p(\Gamma, \Omega^q(M)). 
\end{equation*}
As in Section~\ref{sec:CphiGA},  $C^p(\Gamma, \Omega^q(M))$ consists of $\Gamma$-equivariant maps $\omega: \Gamma^{p+1}\rightarrow \Omega^q(M)$.  In other words, $H^\bt_\Gamma(M)$ is the cohomology of the cochain complex 
$(\Tot_\bt (C_\Gamma(M)), d^\dagger)$, where $ \Tot_m(C_\Gamma(M))= \oplus_{p+q=m} C^{p,q}_\Gamma(M)$ and $d^\dagger=\partial+(-1)^p d$ on 
$C^{p,q}_\Gamma(M)$. It is isomorphic to the cohomology of the homotopy quotient $E\Gamma \times_\Gamma M$. 

The \emph{even/odd equivariant cohomology} $H^{\ev/ \odd}_\Gamma(M)$ 
is the cohomology of the complex,
\begin{equation*}
C^{\ev/\odd}_\Gamma(M)=\left(C^{\ev/ \odd}_\Gamma(M),d^\dagger\right), \qquad \text{where}\ 
C^{\ev/ \odd}_\Gamma(M)=  \prod_{p+q\ \textup{even/odd}} C^{p,q}_\Gamma(M).
\end{equation*}
This is a natural receptacle for the construction of equivariant characteristic classes (\emph{cf}.~\cite{Bo:LNM78}). In particular, given any $\Gamma$-equivariant vector bundle $E$ over $M$ we have a well defined equivariant Chern character $\Ch_\Gamma(E)\in H^\ev_\Gamma(M)$ (see~\cite{Bo:LNM78}). 

We also can define the equivariant homology $H^\Gamma_\bt(M)$ of the $\Gamma$-manifold $M$ by using a dual version of Bott's bicomplex. For our purpose we actually need to construct a "mixed" version of equivariant homology. More precisely, we introduce the equivariant mixed bicomplex, 
\begin{equation*}
C(\Gamma, M):= \left(C_{\bt,\bt}(\Gamma, M), \partial,0,0,d\right), \qquad  \text{where}\ C_{p,q}(\Gamma, M)= C_p(\Gamma)\otimes_\Gamma \Omega^q(M). 
\end{equation*}
In other words, $C(\Gamma, M)$ is the mixed bicomplex $C^\flat(\Gamma,\sC)$ of Section~\ref{sec:LCA}  associated with 
the mixed complex $\sC=(\Omega^\bt(M),0,d)$. This gives rise to the mixed complex 
$\Tot(C(\Gamma,M))=(\Tot_\bt(C(\Gamma,M)), \partial, (-1)^p d)$.

\begin{definition}
 The cyclic homology of the mixed complex $\Tot(C(\Gamma,M))$ is called the \emph{mixed equivariant homology} of the $\Gamma$-manifold $M$ and is denoted by $H^\Gamma_\bt(M)^\natural$. Its periodic cyclic homology is called the \emph{even/odd mixed equivariant homology} of $M$ and is denoted by $H^\Gamma_{\ev/\odd}(M)^\sharp$.
\end{definition}

The mixed equivariant homology is the natural receptacle of the cap product between equivariant cohomology and group homology. Namely, the usual cap product $\frown: C^{p,q}_\Gamma(M) \times C_m(\Gamma,\C) \rightarrow C_{m-p,q}(\Gamma, M)$ is compatible with the differentials $\partial$ and $d$, and so it gives rise to a cap product,
\begin{equation}
 \frown: H^{\ev/\odd}_\Gamma(M) \times H_{\ev/\odd}(\Gamma,\C) \longrightarrow H^\Gamma_{\ev/\odd}(M)^\sharp. 
\label{eq:cap-products-evenodd}
\end{equation}
In particular, caping equivariant characteristic classes with group homology provides us with a geometric construction of mixed equivariant homology classes.

\section{Group Actions on Manifolds}\label{sec:manifolds}
In this section, we assume that $\Gamma$ is a group acting by diffeomorphisms on a compact manifold $M$. We get an action of $\Gamma$ by continuous algebra automorphisms on the Fr\'echet algebra $\cA:=C^\infty(M)$. We shall now explain how to use the results of the previous sections for constructing explicit quasi-isomorphisms for the cyclic and periodic homologies of the crossed-product algebra $\cA_\Gamma=\cA\rtimes \Gamma$. 

In what follows, given any $\phi\in \Gamma$, we denote by $M^\phi$ its fixed-point set in $M$. 

\begin{definition}[{see~\cite{Br:AIF87,BN:KT94}}]
 Let $\phi \in \Gamma$. We shall say that the action of $\phi$ on $M$ is \emph{clean} when, for every $x_0\in M^\phi$,  the following conditions are satisfied:
\begin{enumerate}
\item The fixed-point set $M^\phi$ is a submanifold of $M$ near $x_0$. 

\item We have $T_{x_0}M^\phi = \ker (\phi'(x_0)-1)$ and $T_{x_0}M = T_{x_0}M^\phi \oplus \op{ran} (\phi'(x_0)-1)$.  
\end{enumerate}
\end{definition}

\begin{remark}
 The action of $\phi$ on $M$ is clean whenever it preserves an affine connection or even a Riemannian metric. In particular, we always have a clean action when $\phi$ has finite order.
\end{remark}

Let $\phi \in \Gamma$ act cleanly on $M$. For $a=0,1,\ldots, \dim M$, set 
\begin{equation*}
M^\phi_a:=\{x\in M^\phi; \ \dim \ker (\phi'(x)-1)=a\}. 
\end{equation*}
Then $M^\phi_a$ is a submanifold of $M$ of dimension~$a$. We thus obtain a stratification, 
\begin{equation*}
M^\phi=\bigsqcup M^\phi_a. 
\end{equation*}
This enables us to define the de Rham complex $\Omega(M^\phi)= (\Omega^\bt(M^\phi),d)$ as the direct sum of the de Rham complexes $\Omega(M^\phi_a)$. Note also that each component $M_a^\phi$ is preserved by the action of centralizer $\Gamma_\phi$ of $\phi$. We also define the equivariant bicomplex $C_{\Gamma_\phi}(M^\phi)$ and the equivariant mixed bicomplex $C(\Gamma_\phi,M^\phi)$ as the direct sums of the bicomplexes $C_{\Gamma_\phi}(M^\phi_a)$ and $C(\Gamma_\phi,M^\phi_a)$, respectively. This enables us to define the equivariant cohomology $H^\bt_{\Gamma_\phi}(M^\phi)$ and mixed equivariant homology 
$H_\bt^{\Gamma_\phi}(M^\phi)^\natural$ of $M^\phi$. 
We then have a Hochschild-Kostant-Rosenberg map $\alpha^{\phi}:\bC^\phi(\cA) \rightarrow \Omega^{\bt}(M^\phi)$ given by 
\begin{equation}
\alpha^\phi(f^0\otimes \cdots \otimes f^m)=\frac{1}{m!} \sum_a (f^0df^1\wedge \cdots \wedge df^m)_{|M^\phi_a}, \qquad f^j \in \cA.
\label{eq:HKR-map}
\end{equation}
This is a map of parachain complexes.

\begin{proposition}[{Brylinski~\cite{Br:AIF87}, Brylinski-Nistor~\cite{BN:KT94}}]\label{prop:twisted-CHKR}
 Let $\phi \in \Gamma$ acts cleanly on $M$. Then the parachain complex map~(\ref{eq:HKR-map}) above is a quasi-isomorphism. 
\end{proposition}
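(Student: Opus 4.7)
The map $\alpha^\phi$ is already asserted to be a parachain complex map, so it intertwines the $b_\phi$-differential with the zero differential on $\Omega^\bt(M^\phi)$ and the $B_\phi$-differential with $d$. Since the $b$-differential on the target vanishes, it suffices to show that $\alpha^\phi : (\bC^\phi_\bt(\cA), b_\phi) \to (\Omega^\bt(M^\phi), 0)$ is a quasi-isomorphism. Equivalently, one must identify the twisted Hochschild homology of $\cA = C^\infty(M)$ with coefficients in the $\cA$-bimodule ${}_\phi\!\cA$ (same underlying space, with right action twisted by $\phi^{-1}$) with $\Omega^\bt(M^\phi)$, the identification being the one produced by $\alpha^\phi$ on generators.

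The first step in my plan is a \emph{localization} argument. Away from $M^\phi$ the graph of $\phi$ is disjoint from the diagonal of $M\times M$, so the twisted bimodule ${}_\phi\!\cA$ is, in a precise topological sense, supported along $M^\phi$. Using a $\Gamma_\phi$-invariant smooth partition of unity $\{\chi, 1-\chi\}$ subordinate to a pair $\{U, M \setminus M^\phi\}$ with $U$ an arbitrarily small neighborhood of $M^\phi$, and an explicit Hochschild homotopy built from a bounded inverse of the operator $\phi - \op{id}$ restricted to the moving directions, I would show that any twisted Hochschild chain is $b_\phi$-homologous to one supported in $U$. This reduces the problem to a purely local statement in a neighborhood of $M^\phi$.

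Next, stratify $M^\phi = \bigsqcup_a M^\phi_a$ and work near a point $x_0 \in M^\phi_a$. By the cleanness hypothesis there are local coordinates $(x,y)$ in which $M^\phi$ is cut out by $y=0$ and, after a Bochner-type linearization, $\phi$ acts by $(x,y) \mapsto (x, A(x) y + O(|y|^2))$ with $A(x_0) - 1$ invertible on the normal fiber. I would then build a twisted Koszul-type resolution of ${}_\phi\!\cA$ as a bimodule, factored according to the tangential and normal directions. On the normal factor, invertibility of $\phi' - 1$ makes the twisted Koszul differential acyclic (a twisted Poincar\'e lemma for the completed bar complex), leaving only the contribution at $y=0$; on the tangential factor one has the usual untwisted Koszul resolution and the Connes-HKR identification produces $\Omega^\bt(M^\phi_a)$. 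A direct evaluation on generators shows that this local isomorphism coincides with $\alpha^\phi$.

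The main obstacle is carrying out the Koszul resolution and the localization homotopies in the Fr\'echet topological setting: one must work with projective (completed) tensor products, verify that the twisted Koszul complex is actually an exact topological resolution (including convergence of the inverse of $\phi-\op{id}$ in the normal directions as a continuous operator on the relevant completions), and patch the local quasi-isomorphisms across strata of different dimensions without loss of smoothness at their mutual boundaries. Once these local and topological issues are handled, the partition-of-unity globalization assembles the local identifications into the claimed global quasi-isomorphism.
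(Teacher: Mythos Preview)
The paper does not prove this proposition: it is quoted from the literature (Brylinski~\cite{Br:AIF87} and Brylinski-Nistor~\cite{BN:KT94}), with the untwisted case $\phi=1$ attributed to Connes~\cite{Co:NCDG}. There is therefore no ``paper's own proof'' to compare your attempt against.

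That said, your outline is a reasonable sketch of how the cited proofs proceed: localization to a tubular neighborhood of $M^\phi$ via an acyclicity argument away from the fixed-point set (exploiting invertibility of $\phi-\op{id}$ there), followed by a local twisted Koszul/HKR computation in linearizing coordinates, all carried out in the projective tensor product setting. Two points deserve more care than your sketch indicates. First, the ``Bochner-type linearization'' you invoke is available when $\phi$ preserves an affine connection or a metric, but cleanness alone does not guarantee that $\phi$ is locally conjugate to its linear part; the actual argument in~\cite{BN:KT94} works with the graph of $\phi$ and a normal-form for the bimodule rather than linearizing $\phi$ itself. Second, the patching ``across strata of different dimensions'' is not an issue here: the submanifolds $M^\phi_a$ are open in $M^\phi$ and mutually disjoint, so there are no mutual boundaries to worry about; the stratification is merely a disjoint union of closed submanifolds of varying dimension, and the de Rham complex $\Omega(M^\phi)$ is \emph{defined} as the direct sum of the $\Omega(M^\phi_a)$.
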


\begin{remark}
  For $\phi=1$ this result is due to Connes~\cite{Co:NCDG}. 
\end{remark}
 
Combining Proposition~\ref{prop:twisted-CHKR} with the results of Sections~\ref{sec:LCA} enables us to construct quasi-isomorphisms for the cyclic space $\bC(\cA_\Gamma)_{[\phi]}$ as follows. 

First, when the centralizer $\Gamma_\phi$ is finite, we have the following result.

\begin{theorem} Suppose that $\Gamma_\phi$ is finite. Then the following are quasi-isomorphisms, 
\[
  \Omega(M^\phi)^{\Gamma_\phi,\natural}_\bt  \xleftarrow{\pi_0\otimes \alpha^\phi}
 \Tot_\bt\left(\bC^\phi(\Gamma_\phi,\cA)\right)^\natural\xrightleftharpoons[\AW^\natural]{\shuffle^\natural} \Diag_\bt\left(\bC^\phi(\Gamma_\phi,\cA)\right)^\natural \xrightarrow{\mu_\phi} \bC_\bt(\cA_\Gamma)_{[\phi]}^\natural.
 \] 
There are analogous quasi-isomorphisms between the respective periodic cyclic complexes. This provides us with isomorphisms,
\begin{gather}
 \bHC_m(\cA_\Gamma)_{[\phi]} \simeq \left(\Omega^m(M^\phi)^{\Gamma_\phi}/d\Omega^{m+1}(M^\phi)^{\Gamma_\phi}\right) \oplus 
 H^{m-2}(M^\phi)^{\Gamma_\phi} \oplus \cdots, \qquad m\geq 0, 
 \label{eq:isom-HC-AG-manifold-finite1}\\
  \bHP_i (\cA_\Gamma)_{[\phi]}\simeq \bigoplus_{q\geq 0} H^{2q+i}(M^\phi)^{\Gamma_\phi}, \qquad i=0,1. 
   \label{eq:isom-HP-AG-manifold-finite2}
\end{gather}
\end{theorem}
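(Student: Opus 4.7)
The approach is to apply Theorem~\ref{thm:LCA.bHCAGphi-finite} with $\sC = \Omega^{\bt}(M^\phi)$, regarded as a mixed complex with zero $b$-differential and $B$-differential equal to the de Rham $d$, and with $\alpha$ equal to the Hochschild-Kostant-Rosenberg map $\alpha^\phi$ of~(\ref{eq:HKR-map}). Because $\phi$ acts as the identity on $M^\phi$, the operator $T = 1 - (bB + Bb) = 1$ coincides with the action of $\phi^{-1}$ on $\Omega^{\bt}(M^\phi)$, so this is genuinely a $\phi$-invariant mixed complex. The centralizer $\Gamma_\phi$ preserves each stratum $M^\phi_a$: if $\gamma \in \Gamma_\phi$ then $\phi(\gamma x) = \gamma(\phi x) = \gamma x$, and $\phi'(\gamma x) = \gamma'(x)\phi'(x)\gamma'(x)^{-1}$ so $\gamma'(x)$ intertwines the kernels of $\phi'-1$ at $x$ and $\gamma x$. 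Hence the HKR map is automatically $\Gamma_\phi$-equivariant. The cleanness hypothesis of Proposition~\ref{prop:twisted-CHKR} is automatic here, since $\phi \in \Gamma_\phi$ finite forces $\phi$ to have finite order, and finite-order actions are always clean; consequently $\alpha^\phi$ is a parachain quasi-isomorphism.

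Feeding these data into Theorem~\ref{thm:LCA.bHCAGphi-finite}(1) yields exactly the displayed four-term zig-zag, once one observes that the $\Gamma_\phi$-invariant mixed subcomplex $\sC^{\Gamma_\phi}$ is $\Omega^{\bt}(M^\phi)^{\Gamma_\phi}$, still carrying the mixed complex structure $(0,d)$, whose cyclic complex is precisely $\Omega(M^\phi)^{\Gamma_\phi,\natural}_{\bt}$. The parallel quasi-isomorphisms for the periodic cyclic complexes come from part~(2) of the same theorem.

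It remains to derive the explicit formulas~(\ref{eq:isom-HC-AG-manifold-finite1}) and~(\ref{eq:isom-HP-AG-manifold-finite2}) by computing the (periodic) cyclic homologies of the mixed complex $(\Omega^{\bt}(M^\phi)^{\Gamma_\phi}, 0, d)$. A cyclic $m$-chain is a tuple $(\omega_m, \omega_{m-2}, \ldots)$, and the cyclic differential $b + BS = dS$ sends it to $(d\omega_{m-2}, d\omega_{m-4}, \ldots)$. Hence the cycle condition demands that each $\omega_{m-2k}$ with $k \geq 1$ be closed while leaving $\omega_m$ unconstrained, and the boundaries shift $\omega_m$ by $d\Omega^{m-1}(M^\phi)^{\Gamma_\phi}$ and kill closed forms in lower degrees that are already exact. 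Averaging over the finite group $\Gamma_\phi$ identifies $H^k(\Omega^{\bt}(M^\phi)^{\Gamma_\phi}) \simeq H^k(M^\phi)^{\Gamma_\phi}$, producing~(\ref{eq:isom-HC-AG-manifold-finite1}). For the periodic case, the differential $b + B = d$ acts componentwise on $\prod_{q \geq 0} \Omega^{2q+i}(M^\phi)^{\Gamma_\phi}$, and the homology decouples immediately into $\bigoplus_{q \geq 0} H^{2q+i}(M^\phi)^{\Gamma_\phi}$, giving~(\ref{eq:isom-HP-AG-manifold-finite2}).

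Given Theorem~\ref{thm:LCA.bHCAGphi-finite} and Proposition~\ref{prop:twisted-CHKR}, the proof is largely mechanical: the actual obstacle had already been absorbed into those two results. The only small care points here are verifying the $\phi$-invariant mixed complex structure on $\Omega^{\bt}(M^\phi)$ (which rests on $\phi$ acting trivially there) and the identification of $H^{\bt}(\Omega^{\bt}(M^\phi)^{\Gamma_\phi})$ with $H^{\bt}(M^\phi)^{\Gamma_\phi}$ via averaging, both of which are routine.
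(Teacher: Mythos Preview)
Your proposal is correct and follows exactly the route the paper intends: it instantiates Theorem~\ref{thm:LCA.bHCAGphi-finite} with $\sC=\Omega^\bt(M^\phi)$ and $\alpha=\alpha^\phi$, invoking Proposition~\ref{prop:twisted-CHKR} (applicable since $\Gamma_\phi$ finite forces $\phi$ to have finite order and hence to act cleanly), and then reads off the cyclic and periodic homologies of the mixed complex $(\Omega^\bt(M^\phi)^{\Gamma_\phi},0,d)$. The only point worth flagging is that your computation correctly gives $\Omega^m/d\Omega^{m-1}$ in the top summand, whereas the displayed formula~(\ref{eq:isom-HC-AG-manifold-finite1}) has $d\Omega^{m+1}$; this appears to be a typo in the statement rather than an error on your part.
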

  
When $\Gamma$ is finite, combining~(\ref{eq:isom-HC-AG-manifold-finite1})--(\ref{eq:isom-HP-AG-manifold-finite2}) with~(\ref{eq:LCA.splitting-HCHP}) 
enables us to express $\HC_\bt(\cA_\Gamma)$ and $\HP_\bt(\cA_\Gamma)$ in terms of the orbifold cohomology, 
\begin{equation*}
 H^\bt(M/\Gamma):= \bigoplus H^\bt(M^\phi)^{\Gamma_\phi}.
\end{equation*}
In particular, we recover the following result of Baum-Connes.

\begin{corollary}[{Baum-Connes~\cite{BC:CCDG}}]\label{cor:finite-Baum-Connes}
 Suppose that $\Gamma$ is finite. Then 
 \begin{equation*}
\bHP_\bt(\cA_\Gamma)\simeq H^{\ev/\odd}(M/\Gamma). 
\end{equation*}
\end{corollary}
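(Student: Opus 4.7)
The plan is to directly combine the conjugacy-class splitting of the cyclic space $\bC(\cA_\Gamma)$ from Section~\ref{sec:LCA} with the per-conjugacy-class isomorphism~(\ref{eq:isom-HP-AG-manifold-finite2}) already established, taking advantage of the finiteness of $\Gamma$ at both stages.

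First, I observe that all the hypotheses needed per conjugacy class are automatic: since $\Gamma$ is finite, for any $\phi \in \Gamma$ the centralizer $\Gamma_\phi$ is finite and $\phi$ has finite order, so the action of $\phi$ on $M$ is clean (it even preserves any $\Gamma$-averaged Riemannian metric). Therefore the $\Z_2$-graded isomorphism
\[
\bHP_i(\cA_\Gamma)_{[\phi]} \simeq \bigoplus_{q \geq 0} H^{2q+i}(M^\phi)^{\Gamma_\phi}, \qquad i=0,1,
\]
established in the preceding theorem applies to \emph{every} conjugacy class. Since $M$ is compact, its fixed-point stratification $M^\phi = \bigsqcup_a M^\phi_a$ has de Rham cohomology vanishing above degree $\dim M$, so the direct sum on the right collapses to the (finite) $\Z_2$-graded total $H^{\ev/\odd}(M^\phi)^{\Gamma_\phi}$.

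Next, I invoke the direct-sum decomposition of cyclic spaces~(\ref{eq:direct-sum}) and the induced canonical inclusion of periodic cyclic homologies
\[
\bigoplus_{[\phi]} \bHP_\bt(\cA_\Gamma)_{[\phi]} \subset \bHP_\bt(\cA_\Gamma)
\]
from Section~\ref{sec:LCA}. As recalled there, this inclusion is an equality whenever $\Gamma$ has finitely many conjugacy classes, which is automatic here. Summing the per-component isomorphism over conjugacy classes and recognising the right-hand side as the orbifold cohomology $H^{\ev/\odd}(M/\Gamma) := \bigoplus_{[\phi]} H^{\ev/\odd}(M^\phi)^{\Gamma_\phi}$ then yields the claimed isomorphism.

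There is essentially no obstacle: all the substantive work (the Hochschild-Kostant-Rosenberg quasi-isomorphism of Proposition~\ref{prop:twisted-CHKR}, the finite-centralizer quasi-isomorphism of Theorem~\ref{thm:LCA.bHCAGphi-finite}, and the preceding theorem identifying the per-component periodic cyclic homology) is already in place. The only points one must check explicitly are that the inclusion of the direct sum into $\bHP_\bt(\cA_\Gamma)$ is onto (finiteness of the set of conjugacy classes) and that the infinite direct sum over $q$ collapses to a finite one (compactness of $M$); both are immediate.
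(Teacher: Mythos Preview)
Your proposal is correct and follows essentially the same approach as the paper: combine the per-conjugacy-class isomorphism~(\ref{eq:isom-HP-AG-manifold-finite2}) with the splitting along conjugacy classes, using finiteness of $\Gamma$ to ensure the inclusion $\bigoplus_{[\phi]} \bHP_\bt(\cA_\Gamma)_{[\phi]} \subset \bHP_\bt(\cA_\Gamma)$ is an equality. You are simply more explicit than the paper about verifying the hypotheses (finite centralizer, cleanness, finiteness of the $q$-sum), but the argument is the same.
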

   
%\subsection{$\bC(\cA_\Gamma)_{[\phi]}$ when $\phi$ has finite order}   

Suppose now that $\phi$ has finite order. For $\phi=1$ Connes~\cite{Co:Kyoto83,Co:NCG} constructed an explicit quasi-isomorphism from $C^{\ev/\odd}_\Gamma(M)$ to the periodic cyclic cochain complex of the homogeneous component $\bC(\cA_\Gamma)_{[1]}$. More generally, when $\phi$ has finite order, by combining Theorem~\ref{thm:LCA.bHCAGphi-finite-order} and Proposition~\ref{prop:twisted-CHKR} we obtain the following result.

\begin{theorem}\label{thm:finite-order} 
Let $\phi \in \Gamma$ have finite order. We have explicit quasi-isomorphisms, 
\begin{equation}
 \Tot_\bt\left( C(\Gamma_\phi,M^\phi)\right)^\natural  \xleftarrow[]{(\varepsilon \nu_\phi)\otimes \alpha^\phi}
 \Tot_\bt\left(\bC^\phi(\Gamma_\phi,\cA)\right)^\natural\xrightleftharpoons[\AW^\natural]{\shuffle^\natural} \Diag_\bt\left(\bC^\phi(\Gamma_\phi,\cA)\right)^\natural \xrightarrow{\mu_\phi} \bC_\bt(\cA_\Gamma)_{[\phi]}^\natural.
 \label{eq:manifolds.quasi-isom-finite-order}
\end{equation}
  There are similar quasi-isomorphisms between the respective periodic cyclic complexes. This gives rise to isomorphisms,
\begin{equation}
\bHC(\cA_\Gamma)_{[\phi]}\simeq H^{\Gamma_\phi}_\bt(M^\phi)^\natural \quad  \text{and} \quad \bHP(\cA_\Gamma)_{[\phi]}\simeq H^{\Gamma_\phi}_{\ev/\odd}(M^\phi)^\sharp.
\label{eq:HCHPAMGphi} 
\end{equation}
\end{theorem}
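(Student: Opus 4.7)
The plan is to assemble the stated quasi-isomorphisms from three ingredients already available in the paper: the topological version of Theorem~\ref{thm:finite-order.HCAGphi} recorded in Theorem~\ref{thm:LCA.bHCAGphi-finite-order}, the twisted Connes--Hochschild--Kostant--Rosenberg quasi-isomorphism in Proposition~\ref{prop:twisted-CHKR}, and the identification of $C^\flat(\Gamma_\phi, \Omega^\bt(M^\phi))$ with the equivariant mixed bicomplex $C(\Gamma_\phi, M^\phi)$ of Section~\ref{sec:equivariant}. Since $\phi$ has finite order, its action on $M$ preserves a $\Gamma$-averaged Riemannian metric, so by the remark after the definition of cleanness the action of $\phi$ is clean; thus Proposition~\ref{prop:twisted-CHKR} provides a parachain complex quasi-isomorphism $\alpha^\phi : \bC^\phi(\cA) \to \Omega^\bt(M^\phi)$.

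Next I would feed $\alpha^\phi$ into Theorem~\ref{thm:LCA.bHCAGphi-finite-order} with $\sC=\Omega^\bt(M^\phi)$. This immediately produces the middle chain of quasi-isomorphisms in~(\ref{eq:manifolds.quasi-isom-finite-order}) ending at $\Tot_\bt(C^\flat(\Gamma_\phi, \sC^\phi))^\natural$. The key observation now is that $\phi$ acts trivially on $M^\phi$ and hence as the identity on $\Omega^\bt(M^\phi)$; consequently $\sC^\phi = \sC = \Omega^\bt(M^\phi)$ as a mixed complex of $\C\Gamma_\phi$-modules with $b=0$ and $B=d$. Comparing the definitions of $C^\flat(\Gamma_\phi, -)$ from Section~\ref{sec:finite-order} and of $C(\Gamma_\phi, M^\phi)$ from Section~\ref{sec:equivariant} shows that $C^\flat(\Gamma_\phi, \Omega^\bt(M^\phi))$ is nothing but the equivariant mixed bicomplex $C(\Gamma_\phi,M^\phi)$, with matching total mixed differentials $b^\dagger=\partial$ and $B^\dagger=(-1)^p d$. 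Substituting into~(\ref{eq:LCA.finite-order.quasi-isom-CAGphi}) yields~(\ref{eq:manifolds.quasi-isom-finite-order}).

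The analogous statement for periodic cyclic complexes is obtained by passing to the inverse limit along the periodicity operator $S$, using the second assertion of Theorem~\ref{thm:LCA.bHCAGphi-finite-order}, together with the fact that each of the $S$-maps $\mu_\phi$, $\shuffle^\natural$, $\AW^\natural$ and $(\varepsilon \nu_\phi)\otimes \alpha^\phi$ extends to the periodic complexes (as recorded in Section~\ref{sec:background} for $S$-maps of mixed complexes). Taking homology, the cyclic (resp.\ periodic) homology of $\Tot(C(\Gamma_\phi,M^\phi))$ is by definition the mixed equivariant homology $H^{\Gamma_\phi}_\bt(M^\phi)^\natural$ (resp.\ $H^{\Gamma_\phi}_{\ev/\odd}(M^\phi)^\sharp$), giving the isomorphisms~(\ref{eq:HCHPAMGphi}).

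The only non-mechanical step is verifying that the map produced by Proposition~\ref{prop:twisted-CHKR} is genuinely a map of $\phi$-parachain complexes of $\C\Gamma_\phi$-modules, so that the functorial machinery of Section~\ref{sec:LCA} applies. This amounts to checking the $\Gamma_\phi$-equivariance and compatibility with $(b,B)$ of the formula~(\ref{eq:HKR-map}), which is a direct computation already present in~\cite{Br:AIF87, BN:KT94}; so there is no essential obstacle beyond bookkeeping. Everything else is an assembly of earlier results applied in the specified order.
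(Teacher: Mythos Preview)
Your proposal is correct and follows the same route as the paper, which obtains the theorem by combining Theorem~\ref{thm:LCA.bHCAGphi-finite-order} with Proposition~\ref{prop:twisted-CHKR} and then identifying $C^\flat(\Gamma_\phi,\Omega^\bt(M^\phi))$ with the equivariant mixed bicomplex $C(\Gamma_\phi,M^\phi)$. One small slip: the invariant metric should be averaged over the finite cyclic group $\langle\phi\rangle$, not over $\Gamma$ (which may be infinite), but this does not affect the argument since the paper's remark already records that finite order implies cleanness.
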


\begin{remark}
 Brylinski-Nistor~\cite{BN:KT94} (see also Crainic~\cite{Cr:KT99}) expressed  $\bHC_\bt(\cA_\Gamma)_{[\phi]}$ and $\bHP_\bt(\cA_\Gamma)_{[\phi]}$ in terms of the equivariant homology of $M^\phi$. We obtain explicit quasi-isomorphisms with the equivariant homology complex by combining the quasi-isomorphisms~(\ref{eq:manifolds.quasi-isom-finite-order}) with the Poincar\'e duality for the de Rham complex $\Omega(M^\phi)$. In particular, this enables us to recover the aforementioned results of~\cite{BN:KT94}.
\end{remark}

\begin{remark}
 As mentioned above Connes~\cite{Co:Kyoto83,Co:NCG} constructed an explicit cochain map and quasi-isomorphism from $C^{\ev/\odd}_{\Gamma}(M)$ to the periodic cyclic cochain complex of the homogeneous component $\bC(\cA_\Gamma)_{[1]}$.   We also obtain from Theorem~\ref{thm:finite-order} a chain map at the (non-periodic) level. More precisely, when $\phi=1$ the embedding $\mu_\phi$ is actually an isomorphism of cyclic modules. Therefore, we obtain a chain map and quasi-isomorphism, 
  \begin{equation*}
 \Tot_\bt\left( C(\Gamma,M)\right)^\natural  \xrightarrow{(\varepsilon \otimes \alpha^1)\circ \AW^\natural\circ  \mu^{-1}} \bC_\bt(\cA_\Gamma)_{[1]}^\natural. 
% \label{eq:manifolds.quasi-isom-finite-order}
\end{equation*}
where $\mu^{-1}:\bC_\bt(\cA_\Gamma)_{[1]}\rightarrow \Diag_\bt(\bC(\Gamma,\cA))$ is given by~(\ref{eq:splitting.inverse-mu}). 
\end{remark}

Let $\eta^\phi: H^{\Gamma_\phi}_{\ev/\odd}(M^\phi)^\sharp\rightarrow \bHP(\cA_\Gamma)_{[\phi]}$ be the isomorphism defined by the quasi-isomorphisms~(\ref{eq:manifolds.quasi-isom-finite-order}). Composing it with the cap product~(\ref{eq:cap-products-evenodd}) provides us with the following corollary.

\begin{corollary}\label{cor:finite-order.cap-HG-HP}
 Let $\phi \in \Gamma$ have finite order. Then we have a bilinear graded map, 
\begin{equation*}
\eta^\phi(- \frown-) : H^{\ev/\odd}_{\Gamma_\phi}(M^\phi) \times H_{\ev/\odd}(\Gamma_\phi,\C) \longrightarrow \bHP_\bt(\cA_\Gamma)_{[\phi]}. 
%\label{eq:cap-products-evenodd-eta}
\end{equation*}
In particular, equivariant characteristic classes naturally give rise to classes in $\bHP_\bt(\cA_\Gamma)$. 
\end{corollary}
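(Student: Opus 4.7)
The plan is to assemble the corollary from two ingredients already constructed in the paper: the isomorphism $\eta^\phi$ provided by Theorem~\ref{thm:finite-order} together with the cap product of Section~\ref{sec:equivariant}. First I would recall the second isomorphism of~(\ref{eq:HCHPAMGphi}), which by construction is implemented at the level of chains by the explicit quasi-isomorphisms~(\ref{eq:manifolds.quasi-isom-finite-order}) of periodic complexes, i.e., the composition $\mu_\phi\shuffle^\natural$ followed by a quasi-inverse of $(\varepsilon\nu_\phi)\otimes \alpha^\phi$. This gives me $\eta^\phi:H^{\Gamma_\phi}_{\ev/\odd}(M^\phi)^\sharp \xrightarrow{\simeq} \bHP_\bt(\cA_\Gamma)_{[\phi]}$ with a concrete geometric interpretation.

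Next I would upgrade the cap product~(\ref{eq:cap-products-evenodd}) from $M$ to the fixed-point set $M^\phi$. Since $\phi$ has finite order, its action on $M$ is automatically clean, so $M^\phi$ is a $\Gamma_\phi$-invariant disjoint union of submanifolds $M^\phi = \bigsqcup_a M^\phi_a$ on which $\Gamma_\phi$ acts by diffeomorphisms. As already pointed out in Section~\ref{sec:manifolds}, the de Rham complex, Bott bicomplex, and equivariant mixed bicomplex of $M^\phi$ are defined componentwise. Applying the construction of Section~\ref{sec:equivariant} to each component $M^\phi_a$ and summing gives a bilinear cap product at the chain level,
\begin{equation*}
\frown : C^{p,q}_{\Gamma_\phi}(M^\phi) \times C_m(\Gamma_\phi,\C) \longrightarrow C_{m-p,q}(\Gamma_\phi, M^\phi),
\end{equation*}
compatible with the differentials $\partial$ and $d$. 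Passing to (co)homology yields a pairing
\begin{equation*}
\frown : H^{\ev/\odd}_{\Gamma_\phi}(M^\phi) \times H_{\ev/\odd}(\Gamma_\phi,\C) \longrightarrow H^{\Gamma_\phi}_{\ev/\odd}(M^\phi)^\sharp,
\end{equation*}
exactly as in the unfixed case.

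Composing this pairing with $\eta^\phi$ produces the desired bilinear graded map $\eta^\phi(-\frown -)$. For the concluding assertion, given any $\Gamma$-equivariant vector bundle $E$ on $M$, the restriction $E_{|M^\phi}$ is $\Gamma_\phi$-equivariant on each stratum $M^\phi_a$, so the pullback along the $\Gamma_\phi$-equivariant inclusion $\iota:M^\phi\hookrightarrow M$ gives $\iota^\ast \Ch_\Gamma(E)\in H^\ev_{\Gamma_\phi}(M^\phi)$, and then pairing with any class in $H_{\ev/\odd}(\Gamma_\phi,\C)$ produces a class in $\bHP_\bt(\cA_\Gamma)_{[\phi]} \subset \bHP_\bt(\cA_\Gamma)$ via the canonical inclusion recalled in Section~\ref{sec:LCA}.

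This is essentially a bookkeeping corollary: no genuine obstacle arises, since both ingredients are available and the cap product extension to the stratified set is formal. The only mild point to verify is that the cap product descends to cohomology in the $\Z_2$-graded periodic setting, which follows from the compatibility of $\frown$ with both $\partial$ and $d$ on Bott's bicomplex combined with the definition of $H^{\Gamma_\phi}_{\ev/\odd}(M^\phi)^\sharp$ as periodic cyclic homology of the mixed complex $\Tot(C(\Gamma_\phi,M^\phi))$.
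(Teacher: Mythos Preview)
Your proposal is correct and follows exactly the paper's approach: the paper derives the corollary in a single sentence by composing the isomorphism $\eta^\phi$ from Theorem~\ref{thm:finite-order} with the cap product~(\ref{eq:cap-products-evenodd}). You have simply unpacked the implicit step of extending the cap product from $M$ to the stratified fixed-point set $M^\phi$, which the paper takes for granted.
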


 The definition of the isomorphism $\eta^\phi$  involves the bi-paracyclic versions of the shuffle and Alexander-Whitney maps. These maps are given by explicit formulas, but the formulas involve many terms. We shall now see that we actually get a very simple formula when we pair $\eta^\phi$ with cocycles arising from equivariant currents. 

Let $\Omega^\Gamma(M)=(\Omega^\Gamma_\bt(M), d)$ be the cochain complex of equivariant currents, where 
$\Omega^\Gamma_m(M)$, $m\geq 0$, consists of maps $C:\Gamma\rightarrow \Omega_m(M)$ that are $\Gamma$-equivariant in the sense that
$C(\psi_1^{-1} \psi_0 \psi_1)=(\psi_1)_*[C(\psi_0)]$ for all  $\psi_j \in \Gamma$. (Here $\Omega_m(M)$ is the space of $m$-dimensional currents.)  Any equivariant current $C\in \Omega^\Gamma_m(M)$ defines a cochain $\varphi_C\in C^m(\cA_\Gamma)$ by the formula, 
\begin{equation*}
\varphi_C(f^0u_{\psi_0}, \ldots, f^mu_{\psi_m})= \frac{1}{m!} \acou{C(\psi)}{f^0 d\hat{f}^1 \wedge \cdots \wedge d\hat{f}^m}, \qquad f^j \in \cA, \ \phi_j\in \Gamma,
\end{equation*}
where we have set $\psi=\psi_0 \cdots \psi_m$ and $\hat{f}^j= f^j\circ (\psi_{0} \cdots\psi_{j-1})^{-1}$. This provides us with a map of mixed complexes  from 
$(\Omega_\bt^\Gamma(M), d,0)$ to $(C^\bt(\cA_\Gamma), B, b)$. Therefore, we obtain cochain maps from between the associated cyclic (resp., periodic) cochain complexes. 
Note that the periodic complex of $\Omega^\Gamma(M)$ is just $(\Omega^\Gamma_{\ev/\odd}(M), d)$. The transverse fundamental class of Connes~\cite{Co:Kyoto83} and the CM cocycle of an equivariant Dirac spectral triple~\cite{PW:JNCG16} are instances of cocycles arising from equivariant currents. 

In what follows, given any equivariant chain $\omega=(\omega_{p,q})$, $\omega_{p,q}\in C_{p,q}(\Gamma,M)$, we shall denote by $\omega_0$ its component in $C_{0,\bt}(\Gamma, M)=\C\Gamma \otimes_{\Gamma} \Omega^\bt(M)\simeq \Omega^\bt(M)$.

\begin{proposition}\label{prop:manifolds.pairing-equivarian-currents}
Let $\phi \in \Gamma$ have finite order. Then, for any closed equivariant current $C\in \Omega_{\ev/\odd}^\Gamma(M)$ and any equivariant cycle $\omega \in C_{\ev/\odd}(\Gamma_\phi, M^\phi)$, we have 
\begin{equation*}
\acou{\varphi_C}{\eta^\phi(\omega)}= \acou{C(\phi)}{\tilde{\omega}_0}, 
\end{equation*}
where $\tilde{\omega}_0\in \Omega^\ev(M)$ is such that $\tilde{\omega}_{0|M^\phi}=\omega_0$. 
\end{proposition}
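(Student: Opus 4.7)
The plan is to compute $\varphi_C\circ\mu_\phi$ in closed form, observe that it is blind to the $\Gamma_\phi$-factor, and then use this drastic simplification together with the closedness of $C$ to collapse the bi-paracyclic shuffle sum to a single term that matches an explicit cocycle on the mixed equivariant side.

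First I would plug~(\ref{eq:splitting.mu-phi}) into the defining formula of $\varphi_C$ on a diagonal chain $\xi=(\psi_0,\ldots,\psi_m)\otimes_{\Gamma_\phi}(a^0\otimes\cdots\otimes a^m)$. As computed in Section~\ref{sec:conj-classes}, the total group element $\phi_0\cdots\phi_m$ appearing in $\mu_\phi(\xi)$ is identically $\phi$, so $\varphi_C$ evaluates $C(\phi)$. Moreover, since each $\psi_j\in\Gamma_\phi$ commutes with $\phi$, a direct check shows $a'^0=a^0\circ\psi_m\phi^{-1}$ and $\hat a'^j=a^j\circ\psi_m\phi^{-1}$ for every $j\geq 1$, so that $a'^0\,d\hat a'^1\wedge\cdots\wedge d\hat a'^m=(\psi_m\phi^{-1})^*(a^0\,da^1\wedge\cdots\wedge da^m)$. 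The $\phi$-invariance $\phi_*C(\phi)=C(\phi)$ and the $\Gamma_\phi$-invariance $\psi_*C(\phi)=C(\phi)$ then yield
\[
\varphi_C(\mu_\phi(\xi))=\frac{1}{m!}\acou{C(\phi)}{a^0\,da^1\wedge\cdots\wedge da^m},
\]
which no longer depends on the $\psi_j$'s.

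Next I would apply $\shuffle^\natural$ to a chain $\xi_{p,q}\in\bC^\phi_{p,q}(\Gamma_\phi,\cA)$. The bi-paracyclic shuffle is a signed sum of diagonal chains obtained by inserting $p$ ones into the $\cA$-factor (and $q$ degeneracies into the $\Gamma_\phi$-factor). By the formula just derived, only the $\cA$-side insertions matter in the pairing with $\varphi_C\circ\mu_\phi$, and any $1$ placed at position $\geq 1$ kills the wedge via $d(1)=0$. Hence every term dies for $p\geq 2$, while for $p=1$ only the unique shuffle putting the single $1$ at position $0$ survives and contributes $\pm\tfrac{1}{(q+1)!}\acou{C(\phi)}{d(a^0\,da^1\wedge\cdots\wedge da^q)}$, which vanishes by $dC=0$. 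Thus only $p=0$ contributes and
\[
\varphi_C\bigl(\mu_\phi(\shuffle^\natural(\xi_{p,q}))\bigr)=\delta_{p,0}\,\acou{C(\phi)}{\alpha^M(a^0\otimes\cdots\otimes a^q)},
\]
where $\alpha^M$ is the untwisted HKR map into $\Omega^\bt(M)$. Higher $S$-components of $\shuffle^\natural$ are handled similarly after observing that they too insert degeneracies on the $\cA$-side.

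I would then define a cochain $\psi_C$ on $\Tot_\bt(C(\Gamma_\phi,M^\phi))^\sharp$ by $\psi_C(\eta)=\acou{C(\phi)}{\tilde\eta_0}$, where $\eta_0\in\Omega^\bt(M^\phi)$ is the $p=0$ component of $\eta$ and $\tilde\eta_0\in\Omega^\bt(M)$ is any extension, with $\psi_C$ set to zero on components of horizontal degree $\geq 1$. On a cycle $\omega$ the value $\acou{C(\phi)}{\tilde\omega_0}$ is independent of the extension: two extensions differ by a form vanishing on $M^\phi$, and combining the closedness of $C$ with the cycle relation $d\omega_0=-\partial\omega_1$ forces this excess to pair trivially with $C(\phi)$. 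Since $\alpha^M$ restricts to $\alpha^\phi$ on $M^\phi$, one has the identity $F^*\psi_C=\varphi_C\circ\mu_\phi\circ\shuffle^\natural$ at the level of $S$-cochains, with $F=(\varepsilon\nu_\phi)\otimes\alpha^\phi$. For any chain-level quasi-inverse $\Psi$ of $F$, satisfying $F\Psi\sim\op{id}$, this gives
\[
\acou{\varphi_C}{\eta^\phi(\omega)}=\acou{F^*\psi_C}{\Psi(\omega)}=\acou{\psi_C}{F\Psi(\omega)}=\acou{\psi_C}{\omega}=\acou{C(\phi)}{\tilde\omega_0},
\]
the boundary corrections coming from $F\Psi\sim\op{id}$ disappearing because $\psi_C$ is a cocycle and $\omega$ a cycle.

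The main obstacle is the combinatorial bookkeeping in the second step: one must track signs and degeneracy placements in the bi-paracyclic shuffle~$\shuffle^\natural$ precisely enough to confirm that, in every bidegree, at most one term survives before closedness of $C$ is invoked. A secondary subtlety is verifying that $\psi_C$ is truly a cocycle on $\Tot_\bt(C(\Gamma_\phi,M^\phi))^\sharp$ and that the pairing $\acou{C(\phi)}{\tilde\omega_0}$ is genuinely extension-independent on cycles. Once the key observation of the first step --- that $\varphi_C\circ\mu_\phi$ does not see the $\Gamma_\phi$-factor at all --- is available, the rest of the argument is a telescoping of shuffle sums plus one application of $dC=0$.
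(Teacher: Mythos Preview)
The paper states this proposition without proof, so there is no argument to compare against; your proposal is the only proof on offer, and its overall architecture is correct. The computation in the first step --- that $\varphi_C\circ\mu_\phi$ evaluates $C$ at $\phi$ and, by $\Gamma_\phi$-equivariance of $C(\phi)$, forgets the group coordinates entirely --- is exactly the right observation and carries most of the weight. The subsequent collapse of the shuffle sum is also correct in spirit: ordinary degeneracies on the $\cA$-side insert a unit at positions $\geq 1$, which the HKR form annihilates, so the zeroth component of $\shuffle^\natural$ already contributes only from $p=0$ without invoking $dC=0$; the higher $S$-components are where $dC=0$ is genuinely needed.

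One point deserves sharpening. Your argument for extension-independence of $\psi_C$ does not work as written: two extensions $\tilde\omega_0,\tilde\omega_0'$ differ by a form vanishing on $M^\phi$, but such a form is not exact in general, and neither closedness of $C(\phi)$ nor the cycle relation $d\omega_0=-\partial\omega_1$ forces it to pair trivially with $C(\phi)$ unless $\op{supp}C(\phi)\subset M^\phi$. The cleaner route is to bypass $\psi_C$ altogether and argue directly: step~2 gives $\langle\varphi_C,\mu_\phi\shuffle^\natural\xi\rangle=\langle C(\phi),\alpha^M(\xi_0)\rangle$ for any $\xi$, hence $\langle\varphi_C,\eta^\phi(\omega)\rangle=\langle C(\phi),\alpha^M(\Psi(\omega)_0)\rangle$, and $\alpha^M(\Psi(\omega)_0)$ \emph{is} a specific extension of $(F\Psi\omega)_0$. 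Since $F\Psi\omega$ and $\omega$ are homologous and the left side depends only on the class of $\omega$, the right side is automatically independent of $\Psi$; this is how extension-independence should be deduced, as a consequence rather than a premise. Note that the paper itself tacitly imposes the support condition $\op{supp}C(\phi)\subset M^\phi$ in the application immediately following the proposition.
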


For instance, let $E$ be a $\Gamma_\phi$-equivariant vector bundle over a submanifold component $M^\phi_a$. Given any connection $\nabla^E$ on $E$, the equivariant Chern character of $E$ is represented by an equivariant cocycle $\Ch_{\Gamma_\phi}(\nabla^E)\in C^\ev_{\Gamma_\phi}(M^\phi_a)$ defined as follows. It has components $\Ch^p_{\Gamma_\phi}(\nabla^E) \in C^p(\Gamma_\phi, \Omega^{\bt}(M^\phi_a))$ given by 
\begin{gather*}
\Ch^0_{\Gamma_\phi}(\nabla^E)(\psi_0)=\Ch\left((\psi_0)_*\nabla^E\right), \\ \Ch^p_{\Gamma_\phi}(\nabla^E)(\psi_0,\ldots, \psi_p)=
(-1)^p \CS \left((\psi_0)_*\nabla^E, \ldots, (\psi_p)_*\nabla^E\right), \quad \psi_j\in \Gamma_\phi. 
\end{gather*}
 Here  $\Ch$ is the usual Chern character and $\CS((\psi_0)_*\nabla^E, \ldots, (\psi_p)_*\nabla^E)$ is the Chern-Simons form of the connections $(\psi_0)_*\nabla^E, \ldots, (\psi_p)_*\nabla^E$ as defined in~\cite{Ge:AIM94}. Note that $\CS((\psi_0)_*\nabla^E, \ldots, (\psi_p)_*\nabla^E)$ is an element of $C^p(\Gamma_\phi, \Omega^\ev(M^\phi_a))$ 
 (resp., $C^p(\Gamma_\phi, \Omega^\ev(M^\phi_a))$) when $p$ is even (resp., odd). 
 
 Combining Proposition~\ref{prop:manifolds.pairing-equivarian-currents} and Corollary~\ref{cor:finite-order.cap-HG-HP} shows that, for any closed equivariant current $C$ in 
 $\Omega_{\ev/\odd}^\Gamma(M)$ such that $\op{supp}C(\phi)\subset M^\phi$ and for any cycle $\xi\in C_p(\Gamma_\phi, \C)$, we have
 \begin{equation*}
\acou{\varphi_C}{\eta^\phi (\Ch_\Gamma(\nabla^E) \frown \xi)} = \acou{C(\phi)}{(\Ch_\Gamma(\nabla^E) \frown \xi)_0}. 
\end{equation*}
Here $(\Ch_\Gamma(\nabla^E) \frown \xi)_0$ is simple to calculate. If we set $\xi= \sum_\ell \lambda_\ell (\psi_0^\ell,  \ldots, \psi_p^\ell)\otimes_{\Gamma_\phi} 1$, 
then we have 
\begin{equation*}
\left(\Ch_\Gamma(\nabla^E) \frown \xi \right)_0= \sum_\ell (-1)^p  \CS \left((\psi_0^\ell)_*\nabla^E, \ldots, (\psi_p^\ell)_*\nabla^E\right). 
\end{equation*}
In particular, for the generator $1: =1 \otimes_{\Gamma_\phi} 1$ of $C_0(\Gamma_\phi,\C)$, we get
\begin{equation*}
\acou{\varphi_C}{\eta^\phi (\Ch_\Gamma(\nabla^E) \frown 1)} = \acou{C(\phi)}{\Ch(\nabla^E)}. 
\end{equation*}

Finally, suppose that $\phi$ has infinite order and acts cleanly on $M$. Set $\OG_\phi=\Gamma_\phi / \brak\phi$.  As $\Omega(M^\phi)$ is a $\phi$-invariant mixed complex, we may form the triangular $S$-module $C^\sigma(\OG_\phi,\Omega(M^\phi))$  and its total $S$-module $\Tot(C^\sigma(\OG_\phi, \Omega(M^\phi)))$ as in Section~\ref{sec:infinite-order}. We observe that the space of $m$-chains and the differential of $\Tot(C^\sigma(\OG_\phi, \Omega(M^\phi)))$ are given by 
\begin{equation*}
\Tot_m(C^\sigma(\OG_\phi, \Omega(M^\phi))) =\bigoplus_{p+q=m} C_p(\OG_\phi, \Omega^q(M^\phi)), \qquad d^\dagger = \partial + (-1)^p d(u_\phi \frown -). 
\end{equation*}
Here $u_\phi\in C^2(\OG_\phi, \C)$ is any cocycle representing the Euler class $e_\phi \in H^2(\OG_\phi,\C)$ of the extension of $\OG_\phi$ by $\Gamma_\phi$. 
By combining Theorem~\ref{thm:LCA.infinite-good-action} and Proposition~\ref{prop:twisted-CHKR} we then obtain the following result.

\begin{theorem}\label{thm:manifolds.infinite-order-clean}
 Let $\phi \in \Gamma$ have infinite order and act cleanly on $M$.  We have the following quasi-isomorphisms of chain complexes,
\begin{equation}
 \Tot_\bt (C^\sigma(\overline{\Gamma}_\phi ,\Omega(M^\phi)))  \xleftarrow{\theta(1\otimes \alpha^\phi)}
 \Tot_\bt\left(\bC^\phi(\Gamma_\phi,\cA)\right)^\natural\xrightleftharpoons[\AW^\natural]{\shuffle^\natural} \Diag_\bt\left(\bC^\phi(\Gamma_\phi,\cA)\right)^\natural \xrightarrow{\mu_\phi} \bC_\bt(\cA_\Gamma)_{[\phi]}^\natural. 
 \label{eq:Manifolds.quasi-isom-infinite-clean}
\end{equation}
This gives rise to the isomorphism,  
 \begin{equation}
\bHC(\cA_\Gamma)_{[\phi]}\simeq H_\bt \left(  \Tot_\bt (C^\sigma(\overline{\Gamma}_\phi ,\Omega(M^\phi)))\right). 
\end{equation}
Under this isomorphism, the periodicity operator of $\HC_\bt(\cA)_{[\phi]}$ is given by the cap product, 
\begin{equation*}
e_\phi \frown - : H_\bt\left(\Tot(C^\sigma(\overline{\Gamma}_\phi ,\Omega(M^\phi)))\right) \longrightarrow H_{\bt-2}(\Tot(C^\sigma(\overline{\Gamma}_\phi ,\sC))). 
\end{equation*}
\end{theorem}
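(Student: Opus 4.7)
The plan is to deduce Theorem~\ref{thm:manifolds.infinite-order-clean} directly from Theorem~\ref{thm:LCA.infinite-good-action}, by taking $\sC = \Omega(M^\phi) = (\Omega^\bt(M^\phi),0,d)$ and $\alpha = \alpha^\phi$, the twisted Hochschild--Kostant--Rosenberg map of~(\ref{eq:HKR-map}). The entire argument rests on two observations that together show the hypotheses of that theorem are satisfied, after which its conclusions specialize to those claimed here.

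First, I would check that $\Omega(M^\phi)$ is a $\phi$-invariant mixed complex in the sense of Section~\ref{sec:infinite-order}. Since $\phi$ acts as the identity on its fixed-point set $M^\phi$ (and, by cleanness, on each stratum $M^\phi_a$), the pull-back action of $\phi$ on every $\Omega^q(M^\phi_a)$ is the identity. Hence the operator $T = 1 - (bB+Bb)$ on $\Omega(M^\phi)$ is the identity (indeed $b = 0$ and $B = 0$ in the mixed complex structure under consideration equates $T$ with the identity automatically), and in particular $\Omega(M^\phi)$ is a mixed complex on which $\phi$ acts trivially.

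Second, I would verify that the action of $\phi$ on $\cA = C^\infty(M)$ is topologically good. Proposition~\ref{prop:twisted-CHKR} (Brylinski, Brylinski--Nistor) provides a parachain complex map and quasi-isomorphism $\alpha^\phi : \bC^\phi(\cA) \to \Omega(M^\phi)$. Since the target is a $\phi$-invariant mixed complex, $\bC^\phi(\cA)$ is quasi-isomorphic (as a parachain complex) to a $\phi$-invariant mixed complex. As recalled in Section~\ref{sec:infinite-order}, this is equivalent to the canonical projection onto $\phi$-coinvariants being a quasi-isomorphism, i.e.\ $\bC^\phi(\cA)$ is a good $\phi$-parachain complex. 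Thus the action is topologically good in the sense of Section~\ref{sec:LCA}.

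With these two points established, Theorem~\ref{thm:LCA.infinite-good-action} applies verbatim with $\sC = \Omega(M^\phi)$ and $\alpha = \alpha^\phi$: it yields the quasi-isomorphism chain~(\ref{eq:Manifolds.quasi-isom-infinite-clean}), the resulting isomorphism on cyclic homology, and the identification of the periodicity operator of $\bHC_\bt(\cA_\Gamma)_{[\phi]}$ with the cap product by the Euler class $e_\phi \in H^2(\OG_\phi,\C)$ of the extension $1 \to \brak{\phi} \to \Gamma_\phi \to \OG_\phi \to 1$. Unpacking the definition of the total $S$-module $\Tot(C^\sigma(\OG_\phi,\sC))$ from Section~\ref{sec:infinite-order} in this special case recovers the explicit formula $d^\dagger = \partial + (-1)^p d\,(u_\phi \frown -)$ on $C_p(\OG_\phi,\Omega^q(M^\phi))$ stated just before the theorem. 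There is no real obstacle beyond the two bookkeeping observations above; the work has already been done upstream, in Proposition~\ref{prop:twisted-CHKR} and in Theorem~\ref{thm:LCA.infinite-good-action} together with the triangular $S$-module machinery of Section~\ref{sec:triangular-S-module}.
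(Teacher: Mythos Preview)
Your proposal is correct and follows exactly the paper's own route: the theorem is obtained by combining Theorem~\ref{thm:LCA.infinite-good-action} with Proposition~\ref{prop:twisted-CHKR}, after noting that $\Omega(M^\phi)$ is a $\phi$-invariant mixed complex and that the clean action is therefore topologically good. One small slip: in your parenthetical you write ``$b=0$ and $B=0$'', but in the mixed complex $(\Omega^\bt(M^\phi),0,d)$ one has $b=0$ and $B=d$; the conclusion $T=1$ still follows from $b=0$ alone, so the argument is unaffected.
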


In this context, Corollary~\ref{cor:LCA.infinite-spectral-sequence-good} gives the following statement.

\begin{corollary}\label{cor:manifolds.infinite-order-clean-spectral-sequence}
  Let $\phi \in \Gamma$ have infinite order and act cleanly on $M$. Then the quasi-isomorphisms~(\ref{eq:Manifolds.quasi-isom-infinite-clean}) and the filtration by columns of $\Tot_\bt(C^\sigma(\overline{\Gamma}_\phi ,\Omega(M^\phi)))$ give rise to a spectral sequence, 
  \begin{equation}
E^2_{p,q}=H_p(\OG_\phi, \Omega^q(M^\phi)) \Longrightarrow \bHC_{p+q}(\cA_\Gamma).
\label{eq:manifolds.infinite-order-clean-spectral-sequence}
\end{equation}
Moreover, the $E_2$-differential is $(-1)^pd(u_\phi\frown -): H_p(\OG_\phi, \Omega^q(M^\phi))\rightarrow H_{p-2}(\OG_\phi, \Omega^{q+1}(M^\phi))$. 
\end{corollary}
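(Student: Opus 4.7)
The plan is to combine Theorem~\ref{thm:manifolds.infinite-order-clean} with the general spectral sequence formalism for triangular $S$-modules developed in Section~\ref{sec:triangular-S-module}. Theorem~\ref{thm:manifolds.infinite-order-clean} supplies a zig-zag of quasi-isomorphisms between the cyclic complex $\bC_\bt(\cA_\Gamma)_{[\phi]}^\natural$ and the total $S$-module $\Tot_\bt(C^\sigma(\OG_\phi,\Omega(M^\phi)))$. Consequently any spectral sequence built from a filtration on the latter converges to $\bHC_{\bt}(\cA_\Gamma)_{[\phi]}$. The task therefore reduces to computing the $E^2$-page of the column-filtration spectral sequence on $\Tot_\bt(C^\sigma(\OG_\phi,\Omega(M^\phi)))$ provided by Proposition~\ref{prop:triang-S-mod.horizontal-SC}, and identifying the $E_2$-differential.

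By construction the triangular $S$-module $C^\sigma(\OG_\phi,\Omega(M^\phi))=C^\sigma(\OG_\phi)\otimes_{\OG_\phi}\Omega(M^\phi)$ has horizontal operators $(\partial,u_\phi\frown -)$ coming from the $S$-module $C^\sigma(\OG_\phi)$, and vertical operators $(b,B)=(0,d)$ coming from the $\phi$-invariant mixed complex $\Omega(M^\phi)=(\Omega^\bt(M^\phi),0,d)$. Applying Proposition~\ref{prop:triang-S-mod.horizontal-SC}, the $E^1$-term is the vertical homology $H^v_{p,q}(C)$, which is just $C_p(\OG_\phi)\otimes_{\OG_\phi}\Omega^q(M^\phi)=C_p(\OG_\phi,\Omega^q(M^\phi))$ since $b=0$. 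The induced $E^1$-differential is $\partial$, and passing to homology gives $E^2_{p,q}=H_p(\OG_\phi,\Omega^q(M^\phi))$.

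It remains to compute the $E_2$-differential. Using formula~(\ref{eq:infinite.TotCsGsC}) with $b=0$ and $B=d$, the total differential reads $d^\dagger=\partial+(-1)^p d(u_\phi\frown -)$. Under the column filtration, $\partial$ has bidegree $(-1,0)$ and accounts for $d_1$, while $(-1)^p d(u_\phi\frown -)$ has bidegree $(-2,+1)$ and therefore induces $d_2$. This identifies the $E_2$-differential as $(-1)^p d(u_\phi\frown -)\colon H_p(\OG_\phi,\Omega^q(M^\phi))\to H_{p-2}(\OG_\phi,\Omega^{q+1}(M^\phi))$, as required. In fact, both conclusions are exactly the specialization of Corollary~\ref{cor:LCA.infinite-spectral-sequence-good} to $\sC=\Omega(M^\phi)$; the only verification needed is that the action of $\phi$ on $\cA=C^\infty(M)$ is topologically good, which follows at once from the parachain quasi-isomorphism $\alpha^\phi\colon \bC^\phi(\cA)\to\Omega(M^\phi)$ of Proposition~\ref{prop:twisted-CHKR} together with the characterization (recalled in Section~\ref{sec:infinite-order}) of good $\phi$-parachain complexes as those quasi-isomorphic to a $\phi$-invariant mixed complex. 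There is no substantive obstacle: the result is a clean application of results already in hand.
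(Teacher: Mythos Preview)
Your proposal is correct and follows essentially the same approach as the paper: the paper simply states that the corollary is what Corollary~\ref{cor:LCA.infinite-spectral-sequence-good} gives in this context, and you spell out precisely this specialization (with the added care of verifying the ``topologically good'' hypothesis via Proposition~\ref{prop:twisted-CHKR}). Your direct unpacking of the $E^1$, $E^2$, and $d_2$ terms from Proposition~\ref{prop:triang-S-mod.horizontal-SC} and formula~(\ref{eq:infinite.TotCsGsC}) is exactly how Corollary~\ref{cor:LCA.infinite-spectral-sequence-good} itself is derived, so there is no substantive difference.
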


\begin{remark}\label{rmk:mfld-infinite.Crainic}
Crainic obtained a spectral sequence like~(\ref{eq:manifolds.infinite-order-clean-spectral-sequence}). He then inferred from this that  $\bHC_m(\cA)_{[\phi]} \simeq \oplus_{p+q=m} H_p(\OG_\phi, \Omega^q(M^\phi))$ (see~\cite[Corollary~4.15]{Cr:KT99}). What we really have is the isomorphism $\bHC_\bt(\cA)_{[\phi]}\simeq H_\bt\left( \Tot(C^\sigma(\overline{\Gamma}_\phi ,\Omega(M^\phi)))\right)$ given by Theorem~\ref{thm:manifolds.infinite-order-clean}.
\end{remark}

 \section{Group Actions on Smooth Varieties}\label{sec:varieties} 
In this section, we explain how to obtain analogues of the results of the previous section to group actions on smooth varieties. Throughout this section we assume that $\Gamma$ is a group that acts by biregular isomorphisms on a smooth variety $X$. This action then gives an action on the algebra $\cA=\cO(X)$ of regular functions on $X$. In what follows, given any $\phi \in \Gamma$, we shall denote by $X^\phi$ the fixed-point set of the action of $\phi$ on $X$. 

\begin{definition}[see~\cite{BDN:AIM17}] 
  Let $\phi\in\Gamma$. We shall say that the action of $\phi$ on $X$ is \emph{clean} when the following conditions are satisfied: 
\begin{enumerate}
\item $X^\phi$ is a smooth variety. 

\item $T_{x_0}X^\phi= \ker (\phi'(x_0)-1)$ and $T_{x_0}X^\phi \cap \ran (\phi'(x_0)-1)=\{0\}$ for all $x_0\in X^\phi$.
\end{enumerate}
\end{definition}

\begin{remark}
 When $\phi$ has finite order its action on $X$ is always clean. 
\end{remark}

Let $\phi \in \Gamma$ act cleanly. As $X^\phi$ is a smooth variety we may form the de Rham mixed complex $(\Omega^\bt(X^\phi), 0, d)$ of algebraic forms on $X^\phi$. The twisted Hochschild-Kostant-Rosenberg map $\alpha^\phi: C^\phi(\cA)\rightarrow \Omega^\bt(X^\phi)$ is then given by 
\begin{equation}
\alpha^\phi(a^0\otimes \cdots \otimes a^m)=\frac{1}{m!} a^0 da^1 \cdots da^m, \qquad a^j\in \cA.
\label{eq:HKR-map-variety}
\end{equation}
This is a map of parachain complexes. Furthermore, we have the following result.

\begin{proposition}[{Brodzki-Dave-Nistor~\cite{BDN:AIM17}}] \label{prop:twistedHKR}
 Let $\phi \in \Gamma$ act cleanly on $X$. Then the parachain complex map~(\ref{eq:HKR-map-variety}) is a quasi-isomorphism. 
\end{proposition}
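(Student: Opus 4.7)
The plan is to verify that $\alpha^\phi$ is a quasi-isomorphism of ordinary chain complexes with respect to the $b$-differentials, since this automatically yields the statement for parachain complexes. The starting point is to identify the ordinary homology $H_\bullet(C^\phi(\cA),b)$ with the twisted Hochschild homology $\op{HH}_\bullet(\cA,{}_\phi\cA)$, where ${}_\phi\cA$ denotes $\cA$ viewed as an $\cA$-bimodule with the left action twisted by $\phi^{-1}$. Geometrically this becomes $\op{Tor}^{\cO(X\times X)}_\bullet(\cO_{\Delta_X},\cO_{Z_\phi})$, where $Z_\phi\subset X\times X$ is the graph of $\phi$. This realises the homology as a sheaf on $X\times X$ supported on the set-theoretic intersection $\Delta_X\cap Z_\phi\cong X^\phi$, so the assertion is local on $X$ around fixed points, and both sides of $\alpha^\phi$ vanish away from $X^\phi$.

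The cleanness hypothesis provides an \'etale-local factorisation near any $x_0\in X^\phi$. Choose \'etale coordinates $(x_1,\ldots,x_a,y_1,\ldots,y_b)$ with $a=\dim_{x_0}X^\phi$ such that $X^\phi$ is cut out by $y=0$, and so that $\phi$ fixes each $x_i$ while acting on the $y$-directions with derivative $M:=\phi'(x_0)|_N$ satisfying that $M-\op{I}$ is invertible. Up to \'etale base change or formal completion at $x_0$, the algebra splits as $\cA\simeq\cA_1\otimes\cA_2$, where $\cA_1$ is a local ring of $X^\phi$ at $x_0$ (fixed by $\phi$) and $\cA_2$ is a local ring of affine $b$-space at $0$ on which $\phi$ has $0$ as its only fixed point. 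Under this splitting, $C^\phi(\cA)$ is shuffle-quasi-isomorphic to $C(\cA_1)\otimes C^\phi(\cA_2)$, and $\Omega^\bullet(X^\phi)$ splits correspondingly, so the claim reduces to the two factors separately.

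For the first factor, the statement is the classical Hochschild-Kostant-Rosenberg theorem for the smooth variety $X^\phi$ and $\alpha^{\op{id}}$ is the classical HKR map. For the second factor, the elements $y_j-\phi(y_j)\in\cA_2$ form a regular sequence at $0$ because their Jacobian is $M-\op{I}$, which is invertible; equivalently, $\Delta$ and the graph of $\phi$ meet transversally in $\op{Spec}(\cA_2\otimes\cA_2)$ at $(0,0)$. The Koszul complex on these elements yields a free $\cA_2\otimes\cA_2$-resolution of $\cA_2$ suited to pairing with ${}_\phi\cA_2$, showing that $\op{Tor}^{\cA_2\otimes\cA_2}_\bullet(\cA_2,{}_\phi\cA_2)$ is concentrated in degree zero and equal to the residue field at $0$. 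A direct computation with the twisted HKR formula identifies $\alpha^\phi$ on $\cA_2$ with the augmentation $y_j\mapsto 0$, which realises precisely this isomorphism.

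Reassembling the two factors via K\"unneth recovers $\Omega^\bullet(X^\phi)$ locally at $x_0$ in the expected degrees and identifies the resulting map with $\alpha^\phi$, giving the desired stalkwise quasi-isomorphism. The main technical obstacle is the \'etale-local factorisation $\cA\simeq\cA_1\otimes\cA_2$: strictly one must pass to an \'etale neighborhood or formal completion at $x_0$ and then descend the quasi-isomorphism back to $\cA$ using faithful flatness. Checking that $\alpha^\phi$ is itself compatible with the tensor decomposition, so the K\"unneth reassembly returns $\alpha^\phi$ rather than merely a quasi-isomorphism in the same homotopy class, is the other delicate point.
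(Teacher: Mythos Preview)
The paper does not supply its own proof of this proposition: it is stated as a result of Brodzki--Dave--Nistor with a citation to~\cite{BDN:AIM17}, followed only by a remark that the case $\phi=1$ goes back to Loday--Quillen and Hochschild--Kostant--Rosenberg. There is therefore no in-paper argument to compare your proposal against.

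That said, your sketch is a faithful outline of the standard strategy used in the literature (including in~\cite{BDN:AIM17} and, in the smooth setting, in~\cite{Br:AIF87, BN:KT94}): interpret the $b$-homology of $C^\phi(\cA)$ as $\op{Tor}^{\cO(X\times X)}_\bullet(\cO_{\Delta_X},\cO_{Z_\phi})$, localise near the fixed locus, use cleanness to split off a transversal factor on which $\phi'-1$ is invertible, apply classical HKR on the tangential factor and a Koszul argument on the transversal one, then reassemble by K\"unneth. You have also correctly flagged the two genuine technical points---making the tensor factorisation rigorous via \'etale neighbourhoods or completion and then descending, and checking that the reassembled map is $\alpha^\phi$ on the nose rather than merely homotopic to it. If you wanted to turn this into a self-contained proof you would need to fill in precisely those two steps; the rest is routine.
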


\begin{remark}
 When $\phi=1$ the result is due to Loday-Quillen~\cite{LQ:CMH84} (see also~\cite{HKR:TAMS62}). 
\end{remark}
 
Combining Proposition~\ref{prop:twistedHKR} with the results of the Sections~\ref{sec:finite}--\ref{sec:infinite-order} enables us to determine the cyclic homology of the crossed-product $\cA_\Gamma=\cA\rtimes \Gamma$ in the same way as with group actions on manifolds in the previous section. 

First, when $\Gamma_\phi$ is finite we have the following result.

\begin{theorem}\label{thm:varieties.finite}
 Let $\phi\in \Gamma_\phi$ have a finite centralizer $\Gamma_\phi$. We have explicit quasi-isomorphisms, 
\[
  \Omega(X^\phi)^{\Gamma_\phi,\natural}_\bt  \xleftarrow{\pi_0\otimes \alpha^\phi}
 \Tot_\bt\left(C^\phi(\Gamma_\phi,\cA)\right)^\natural\xrightleftharpoons[\AW^\natural]{\shuffle^\natural} \Diag_\bt\left(C^\phi(\Gamma_\phi,\cA)\right)^\natural \xrightarrow{\mu_\phi} C_\bt(\cA_\Gamma)_{[\phi]}^\natural.
 \] 
There are similar quasi-isomorphisms at the level of the periodic cyclic complexes. This provides us with isomorphisms,
\begin{gather}
 \HC_m(\cA_\Gamma) \simeq \left(\Omega^m(X^\phi)^{\Gamma_\phi}/d\Omega^{m+1}(X^\phi)^{\Gamma_\phi}\right) \oplus 
 H^{m-2}(X^\phi)^{\Gamma_\phi} \oplus \cdots, \qquad m\geq 0, 
 \label{eq:isom-HC-AG-variety-finite}\\
  \HP_i (\cA_\Gamma)\simeq \bigoplus_{q\geq 0} H^{2q+i}(X^\phi)^{\Gamma_\phi}, \qquad i=0,1. 
   \label{eq:isom-HP-AG-variety-finite}
\end{gather}
\end{theorem}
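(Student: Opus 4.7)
The plan is to apply the general finite-centralizer framework Theorem~\ref{thm:finite.quasi-isomorphism-CAGphi} with the $\phi$-parachain complex $\sC=\Omega(X^\phi)$ equipped with the trivial $b$-differential and $B=d$, using as input the twisted Hochschild-Kostant-Rosenberg quasi-isomorphism $\alpha^\phi: C^\phi(\cA)\rightarrow \Omega^\bt(X^\phi)$ from Proposition~\ref{prop:twistedHKR}. Since the theorem is stated for the algebraic crossed-product $\cA_\Gamma=\cO(X)\rtimes \Gamma$ (no topological completion), we use the algebraic form of the finite-centralizer theorem, not its locally convex analogue.

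First I would verify the hypotheses. The centralizer $\Gamma_\phi$ is assumed finite. For $\Omega(X^\phi)$ to qualify as a $\phi$-parachain complex of $k\Gamma_\phi$-modules, the centralizer $\Gamma_\phi$ acts on $X^\phi$ by biregular isomorphisms and hence on $\Omega^\bt(X^\phi)$, while $\phi$ itself acts trivially on $X^\phi$ (this being by definition the fixed-point set), so the operator $T=1-(bB+Bb)=1$ agrees with the action of $\phi^{-1}$ as required. Hence $\Omega(X^\phi)$ is in fact a genuine $\phi$-invariant mixed complex. The $\Gamma_\phi$-equivariance and parachain-complex compatibility of $\alpha^\phi$ are immediate from formula~(\ref{eq:HKR-map-variety}), and Proposition~\ref{prop:twistedHKR} asserts it is a quasi-isomorphism.

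Second, feeding $\sC=\Omega(X^\phi)$ and $\alpha=\alpha^\phi$ into parts (1) and (2) of Theorem~\ref{thm:finite.quasi-isomorphism-CAGphi} yields the displayed chain of explicit quasi-isomorphisms between $\Omega(X^\phi)^{\Gamma_\phi,\natural}_\bt$ and $C_\bt(\cA_\Gamma)_{[\phi]}^\natural$, together with the analogous quasi-isomorphisms between the associated periodic cyclic complexes. Part (3) of that theorem then gives the isomorphisms $\HC_\bt(\cA_\Gamma)_{[\phi]}\simeq \HC_\bt(\Omega(X^\phi)^{\Gamma_\phi})$ and $\HP_\bt(\cA_\Gamma)_{[\phi]}\simeq \HP_\bt(\Omega(X^\phi)^{\Gamma_\phi})$. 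To conclude it suffices to compute the cyclic and periodic cyclic homology of the mixed complex $(\Omega^\bt(X^\phi)^{\Gamma_\phi},0,d)$; a direct inspection of the cyclic complex $C^\natural$ (whose differential reduces to $BS$ since $b=0$) identifies $m$-cycles with sequences $(\omega_m,\omega_{m-2},\ldots)$ in which all components except the top lie in $\ker d$, and $m$-boundaries with such sequences whose components except the top lie in $\op{ran} d$, yielding formulas~(\ref{eq:isom-HC-AG-variety-finite})--(\ref{eq:isom-HP-AG-variety-finite}).

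The main intellectual content has been outsourced: the framework is Theorem~\ref{thm:finite.quasi-isomorphism-CAGphi}, and the hard geometric input is the twisted HKR theorem of Brodzki-Dave-Nistor (Proposition~\ref{prop:twistedHKR}). Once these are available, the only thing to check is the compatibility with the $\phi$-parachain-complex structure, and this is immediate because $\phi$ fixes $X^\phi$ pointwise. There is thus no significant obstacle; the novelty is simply that everything plugs together cleanly thanks to the cleanness of the fixed-point stratification.
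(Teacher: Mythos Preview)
Your proposal is correct and follows exactly the paper's approach: the paper states the theorem without proof, having already noted that one obtains it by combining Proposition~\ref{prop:twistedHKR} (the twisted HKR quasi-isomorphism for smooth varieties) with the finite-centralizer framework of Section~\ref{sec:finite}, i.e., Theorem~\ref{thm:finite.quasi-isomorphism-CAGphi}. One small slip: in your description of the cyclic complex of $(\Omega^\bt(X^\phi)^{\Gamma_\phi},0,d)$, the $m$-boundaries are sequences \emph{all} of whose components (including the top one) lie in $\op{ran} d$, which is what actually yields the top summand $\Omega^m/d\Omega^{m-1}$ in~(\ref{eq:isom-HC-AG-variety-finite}).
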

 
\begin{remark}
 Brodzki-Dave-Nistor~\cite[Theorem~2.19]{BDN:AIM17} obtained the isomorphisms~(\ref{eq:isom-HC-AG-variety-finite})--(\ref{eq:isom-HP-AG-variety-finite}),   
but they did not exhibit explicit quasi-isomorphisms. They also extended the isomorphism~(\ref{eq:isom-HP-AG-variety-finite}) to the case where $X$ is non-smooth.
\end{remark}

\begin{remark}
 When $\Gamma$ is finite, combining~(\ref{eq:isom-HC-AG-variety-finite})--(\ref{eq:isom-HP-AG-variety-finite}) with~(\ref{eq:splitting-HCHP}) 
enables us to express $\HC_\bt(\cA_\Gamma)$ and $\HP_\bt(\cA_\Gamma)$ in terms of the (algebraic) orbifold cohomology $H^\bt(X/\Gamma):= \oplus H^\bt(X^\phi)^{\Gamma_\phi}$ (\emph{cf.}~\cite{BDN:AIM17}). 
This is the algebraic version of Corollary~\ref{cor:finite-Baum-Connes}. 
\end{remark}

More generally, as with group actions on manifolds, when $\phi$ has finite order we can relate the cyclic homology and periodic cyclic homology of $C(\cA_\Gamma)_{[\phi]}$ to mixed equivariant homology and equivariant homology for smooth varieties as follows. 

We define the \emph{equivariant cohomology} $H^\bt_{\Gamma_\phi}(X^\phi)$ as the cohomology of the total complex of the bicomplex 
$C_{\Gamma_\phi}(X^\phi)=(C_{\Gamma_\phi}^{\bt,\bt}(X^\phi), \partial, d)$, where $C^{p,q}_{\Gamma_\phi}(X^\phi)$ consists of all $\Gamma_\phi$-equivariant maps $\omega:\Gamma_\phi^{p+1}\rightarrow \Omega^q(X^\phi)$. We define the \emph{even/odd equivariant cohomology} $H^{\ev/\odd}_{\Gamma_\phi}(X^\phi)$ as the cohomology of the complex $(C^{\ev/\odd}_{\Gamma_\phi}(X^\phi), \partial +(-1)^pd)$, where ${ C^{\ev/\odd}_{\Gamma_\phi}(X^\phi)=\prod_{\text{$p\!+\!q$\! even/odd}} C^{p,q}_{\Gamma_\phi}(X^\phi)}$. 

The \emph{mixed equivariant bicomplex} $C(\Gamma_\phi, X^\phi)$ is the mixed bicomplex $(C_{\bt,\bt}(\Gamma_\phi, X^\phi),\partial, 0,0,d)$, where 
$C_{p,q}(\Gamma_\phi, X^\phi)=C_p(\Gamma_\phi)\otimes_{\Gamma_\phi} \Omega^q(X^\phi)$. The \emph{mixed equivariant homology} $H^{\Gamma_\phi}_\bt(X^\phi)^\natural$ is the cyclic homology of the total mixed complex $\Tot(C(\Gamma_\phi, X^\phi))=( \Tot_\bt(C(\Gamma_\phi, X^\phi)),\partial, (-1)^pd)$. The \emph{even/odd mixed equivariant homology} $H^{\Gamma_\phi}_{\ev/\odd}(X^\phi)^\sharp$ is the periodic cyclic homology of $\Tot(C(\Gamma_\phi, X^\phi))$. In the same way as in Section~\ref{sec:equivariant}, we have a natural (graded) cap product, 
 \begin{equation}
  \frown :  H^{\ev/\odd}_{\Gamma_\phi}(X^\phi) \times H^{\ev/\odd}(\Gamma_\phi, \C) \longrightarrow H^{\Gamma_\phi}_{\ev/\odd}(X^\phi)^\sharp.
  \label{eq:cap-products-evenodd-varieties}
\end{equation}

Bearing all this in mind, in the same way as with group actions on manifolds in Section~\ref{sec:manifolds}, the mixed equivariant bicomplex $C(\Gamma_\phi, X^\phi)$ is the 
mixed bicomplex $C^\flat(\Gamma_\phi, \sC^\phi)$ for $\sC=\Omega(X^\phi)$. Therefore, by combining Theorem~\ref{thm:finite-order.HCAGphi} with Proposition~\ref{prop:twistedHKR} we obtain the following result. 

\begin{theorem}\label{thm:varieties.finite-order}
 Let $\phi \in \Gamma$ have finite order. Then we have explicit quasi-isomorphisms, 
 \begin{equation}
 \Tot_\bt\left(C(\Gamma_\phi, X^\phi)\right)^\natural  \xleftarrow{(\varepsilon \nu_\phi)\otimes \alpha^\phi}
 \Tot_\bt\left(C^\phi(\Gamma_\phi,\cA)\right)^\natural\xrightleftharpoons[\AW^\natural]{\shuffle^\natural} \Diag_\bt\left(C^\phi(\Gamma_\phi,\cA)\right)^\natural \xrightarrow{\mu_\phi} C_\bt(\cA_\Gamma)_{[\phi]}^\natural.
\label{eq:varieties.quasi-isom-finite-order}
\end{equation}
There are analogous quasi-isomorphisms between the corresponding periodic cyclic complexes. This provides us with isomorphisms,
\[
\HC_\bt(\cA_\Gamma)_{[\phi]}\simeq H^{\Gamma_\phi}_\bt(X^\phi)^\natural \qquad \text{and} \qquad 
\HP_\bt(\cA_\Gamma)_{[\phi]}\simeq H^{\Gamma_\phi}_\bt(X^\phi)^\sharp.
\]
\end{theorem}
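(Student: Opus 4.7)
The plan is to invoke Theorem~\ref{thm:finite-order.HCAGphi} applied to the twisted Hochschild-Kostant-Rosenberg map in the algebraic setting. Concretely, I take $\sC$ to be the de Rham complex $\Omega(X^\phi)$, regarded as a mixed complex of $\C\Gamma_\phi$-modules with $b=0$ and $B=d$, and I take $\alpha = \alpha^\phi: C^\phi(\cA) \to \Omega(X^\phi)$ to be the parachain map defined in~(\ref{eq:HKR-map-variety}).

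The hypotheses of Theorem~\ref{thm:finite-order.HCAGphi} need to be checked. Since $\phi$ fixes $X^\phi$ pointwise, its induced action on $\Omega(X^\phi)$ by pullback is trivial. Therefore $\Omega(X^\phi)$ is already a $\phi$-invariant mixed complex, so $\sC^\phi = \sC$, and under this identification the mixed bicomplex $C^\flat(\Gamma_\phi,\sC^\phi) = C^\flat(\Gamma_\phi,\Omega(X^\phi))$ coincides tautologically with the equivariant mixed bicomplex $C(\Gamma_\phi,X^\phi)$ introduced just before the theorem. The fact that $\alpha^\phi$ is a quasi-isomorphism of $\phi$-parachain complexes is exactly the content of Proposition~\ref{prop:twistedHKR}.

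With these identifications, Theorem~\ref{thm:finite-order.HCAGphi}(1) directly produces the chain of explicit quasi-isomorphisms~(\ref{eq:varieties.quasi-isom-finite-order}). Part~(2) of that theorem yields the analogous quasi-isomorphisms at the level of periodic cyclic complexes, while Part~(3) supplies the isomorphisms
\[
\HC_\bt(\cA_\Gamma)_{[\phi]} \simeq \HC_\bt\bigl(\Tot(C^\flat(\Gamma_\phi,\sC^\phi))\bigr), \qquad \HP_\bt(\cA_\Gamma)_{[\phi]} \simeq \HP_\bt\bigl(\Tot(C^\flat(\Gamma_\phi,\sC^\phi))\bigr),
\]
which become the claimed isomorphisms $\HC_\bt(\cA_\Gamma)_{[\phi]}\simeq H^{\Gamma_\phi}_\bt(X^\phi)^\natural$ and $\HP_\bt(\cA_\Gamma)_{[\phi]}\simeq H^{\Gamma_\phi}_\bt(X^\phi)^\sharp$ by the very definition of mixed equivariant homology as the (periodic) cyclic homology of $\Tot(C(\Gamma_\phi,X^\phi))$.

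The main obstacle is not in this theorem itself but rather in the two ingredients being combined: the hard input is Proposition~\ref{prop:twistedHKR} (the algebraic twisted HKR theorem of Brodzki-Dave-Nistor), whose proof rests on the cleanness hypothesis, automatic here because $\phi$ has finite order. Once Theorem~\ref{thm:finite-order.HCAGphi} and Proposition~\ref{prop:twistedHKR} are both available, the only point requiring care is the structural identification $C^\flat(\Gamma_\phi,\Omega(X^\phi)) = C(\Gamma_\phi,X^\phi)$; this is immediate from the triviality of the $\phi$-action on $X^\phi$, which forces $b = 1 - \phi^{-1} = 0$ on $\Omega^\bt(X^\phi)$ and makes the horizontal $b$-differential of the equivariant bicomplex vanish by construction.
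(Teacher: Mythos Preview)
Your proposal is correct and follows essentially the same route as the paper: the paper explicitly states that $C(\Gamma_\phi, X^\phi)$ is the mixed bicomplex $C^\flat(\Gamma_\phi,\sC^\phi)$ for $\sC=\Omega(X^\phi)$ and then obtains the result by combining Theorem~\ref{thm:finite-order.HCAGphi} with Proposition~\ref{prop:twistedHKR}, which is precisely your argument. One small wording quibble: the relation you write as ``$b = 1-\phi^{-1} = 0$'' should read $bB+Bb = 1-\phi^{-1} = 0$; the vanishing of the $b$-differential on $\Omega(X^\phi)$ is by definition, while the triviality of $\phi^{-1}$ on $\Omega(X^\phi)$ is what guarantees $\sC^\phi=\sC$.
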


Let $\eta^\phi: H^{\Gamma_\phi}_{\ev/\odd}(X^\phi)^\sharp\rightarrow \HP_\bt(\cA_\Gamma)_{[\phi]}$ be the isomorphism defined by the quasi-isomorphisms~(\ref{eq:varieties.quasi-isom-finite-order}). In the same way as with group actions on manifolds, composing it with the cap product~(\ref{eq:cap-products-evenodd-varieties}) we arrive at the following statement.

\begin{corollary}%\label{cor:finite-order.cap-HG-HP-varieties}
 Let $\phi \in \Gamma$ have finite order. Then we have a bilinear graded map, 
\begin{equation*}
\eta^\phi(- \frown-) : H^{\ev/\odd}_{\Gamma_\phi}(X^\phi) \times H_{\ev/\odd}(\Gamma_\phi,\C) \longrightarrow \HP_\bt(\cA_\Gamma)_{[\phi]}. 
\end{equation*}
In particular, equivariant characteristic classes in $H^{\ev/\odd}_{\Gamma_\phi}(X^\phi)$ give rise to classes in $\bHP_\bt(\cA_\Gamma)_{[\phi]}$. 
\end{corollary}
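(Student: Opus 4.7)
The plan is to obtain the desired bilinear map as the composition of two pieces that are already in place: the graded cap product~(\ref{eq:cap-products-evenodd-varieties}) landing in even/odd mixed equivariant homology, and the isomorphism $\eta^\phi$ furnished by Theorem~\ref{thm:varieties.finite-order}. Thus the strategy is directly parallel to the manifold case treated in Corollary~\ref{cor:finite-order.cap-HG-HP}.

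First, I would verify that the cap product~(\ref{eq:cap-products-evenodd-varieties}) is well defined for smooth varieties exactly as in the manifold setting. At the level of chains it is given by the usual front-face formula, restricted to the $\Gamma_\phi$-equivariant subspace and tensored over $\Gamma_\phi$ with $\Omega^\bt(X^\phi)$: for $\omega \in C^{p,q}_{\Gamma_\phi}(X^\phi)$ and $\xi=(\psi_0,\ldots,\psi_m)\otimes_{\Gamma_\phi} 1 \in C_m(\Gamma_\phi,\C)$ one sets $\omega \frown \xi = (\psi_p,\ldots,\psi_m)\otimes_{\Gamma_\phi} \omega(\psi_0,\ldots,\psi_p)$ (extended $\C$-linearly and $\Gamma_\phi$-equivariantly). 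One checks that this intertwines $\partial$ with $\partial$ and $d$ with $d$, hence descends to a map between the total complexes, and after passing to the periodic complex on the target factor gives a bilinear map from $H^{\ev/\odd}_{\Gamma_\phi}(X^\phi)\times H_{\ev/\odd}(\Gamma_\phi,\C)$ to $H^{\Gamma_\phi}_{\ev/\odd}(X^\phi)^\sharp$.

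Next, I would compose with the isomorphism $\eta^\phi: H^{\Gamma_\phi}_{\ev/\odd}(X^\phi)^\sharp \xrightarrow{\simeq} \HP_\bt(\cA_\Gamma)_{[\phi]}$ defined by the periodic analogue of the explicit chain of quasi-isomorphisms~(\ref{eq:varieties.quasi-isom-finite-order}) in Theorem~\ref{thm:varieties.finite-order}. The composition $\eta^\phi(-\frown-)$ is then the bilinear graded map of the statement, and it is manifestly $\C$-bilinear and of total degree zero (mod $2$) by construction.

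For the \emph{in particular} clause, I would recall that given a $\Gamma_\phi$-equivariant algebraic vector bundle $E\to X^\phi$ with an algebraic connection $\nabla^E$, the equivariant Chern character $\Ch_{\Gamma_\phi}(\nabla^E)$ is represented by a cocycle in $C^\ev_{\Gamma_\phi}(X^\phi)$ whose components are built from the ordinary Chern character and from Chern--Simons transgressions of the family $\{(\psi_0)_*\nabla^E,\ldots,(\psi_p)_*\nabla^E\}$, exactly as in the manifold case treated in Section~\ref{sec:manifolds}. Caping such a class with any element of $H_{\ev/\odd}(\Gamma_\phi,\C)$ and applying $\eta^\phi$ produces the desired periodic cyclic classes on $\cA_\Gamma$. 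The only possible obstacle is the chain-level compatibility of the cap product with both differentials and with the $(-1)^p$ sign conventions of $\Tot(C(\Gamma_\phi, X^\phi))$, but this is a routine sign check identical to the one used to establish~(\ref{eq:cap-products-evenodd}); no new input is needed beyond the results already assembled in Theorem~\ref{thm:varieties.finite-order}.
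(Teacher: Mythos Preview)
Your proposal is correct and follows exactly the paper's approach: the corollary is obtained by composing the cap product~(\ref{eq:cap-products-evenodd-varieties}) with the isomorphism $\eta^\phi$ supplied by Theorem~\ref{thm:varieties.finite-order}, just as in the manifold case. One small caveat: your elaboration of the ``in particular'' clause via algebraic connections and Chern--Simons forms tacitly assumes such connections exist, which is not automatic in the algebraic category; the paper's statement wisely refers only to equivariant characteristic classes in $H^{\ev/\odd}_{\Gamma_\phi}(X^\phi)$ without committing to a specific construction.
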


Finally, suppose that $\phi$ has infinite order and acts cleanly on $X$. Set $\OG_\phi=\Gamma_\phi / \brak\phi$.  As in Section~\ref{sec:manifolds}, $\Omega(X^\phi)$ is a $\phi$-invariant mixed complex, and so we may form the triangular $S$-module $C^\sigma(\OG_\phi,\Omega(X^\phi))$ and its total $S$-module $\Tot(C^\sigma(\OG_\phi, \Omega(X^\phi)))$. The space of $m$-chains of $\Tot(C^\sigma(\OG_\phi, \Omega(X^\phi)))$ and its differential are then given by
\begin{equation*}
\Tot_m(C^\sigma(\OG_\phi, \Omega(X^\phi))) =\bigoplus_{p+q=m} C_p(\OG_\phi, \Omega^q(X^\phi)), \qquad d^\dagger = \partial + (-1)^p d(u_\phi \frown -). 
\end{equation*}
Here $u_\phi\in C^2(\OG_\phi, \C)$ is any cocycle representing the Euler class $e_\phi \in H^2(\OG_\phi,\C)$ of the extension of $\OG_\phi$ by $\Gamma_\phi$. 
By combining Theorem~\ref{thm:infinite.HCAGphi} and Proposition~\ref{prop:twistedHKR} we then obtain the following result.

\begin{theorem}\label{thm:varieties.infinite-order}
 Let $\phi \in \Gamma$ have infinite order and act cleanly on $X$.  We have the following quasi-isomorphisms of chain complexes, 
  \begin{equation}
 \Tot_\bt (C^\sigma(\overline{\Gamma}_\phi ,\Omega(X^\phi)))  \xleftarrow{\theta(1\otimes \alpha)}
 \Tot_\bt\left(C^\phi(\Gamma_\phi,\cA)\right)^\natural\xrightleftharpoons[\AW^\natural]{\shuffle^\natural} \Diag_\bt\left(C^\phi(\Gamma_\phi,\cA)\right)^\natural \xrightarrow{\mu_\phi} C_\bt(\cA_\Gamma)_{[\phi]}^\natural.
 \label{eq:varieties.infinite-quasi-isom}
\end{equation} 
This provides us with an isomorphism,
\begin{equation*}
\HC_\bt(\cA)_{[\phi]}\simeq H_\bt\left( \Tot(C^\sigma(\overline{\Gamma}_\phi ,\Omega(X^\phi)))\right).
%\label{eq:infinite.isom-HCAG-HTotCsGsC-variety} 
\end{equation*}
 Under this isomorphism the periodicity operator of $\HC_\bt(\cA)_{[\phi]}$ is given by the cap product, 
\begin{equation*}
e_\phi \frown - : H_\bt\left(\Tot(C^\sigma(\overline{\Gamma}_\phi ,\Omega(X^\phi)))\right) \longrightarrow H_{\bt-2}\left(\Tot(C^\sigma(\overline{\Gamma}_\phi ,\Omega(X^\phi)))\right),
\end{equation*}
\end{theorem}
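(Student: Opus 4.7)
The plan is to verify that the hypotheses of Theorem~\ref{thm:infinite.HCAGphi} are met in this algebraic setting, with $\sC=\Omega(X^\phi)$ and $\alpha=\alpha^\phi$, and then to invoke that theorem directly. This parallels the argument for Theorem~\ref{thm:manifolds.infinite-order-clean}, but in the algebraic rather than the locally convex framework, so the twisted Hochschild-Kostant-Rosenberg ingredient supplied by Proposition~\ref{prop:twistedHKR} replaces the smooth version of Brylinski-Nistor.

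First I would observe that $\Omega(X^\phi)=(\Omega^\bt(X^\phi),0,d)$ is a $\phi$-invariant mixed complex. Indeed, cleanness guarantees that $X^\phi$ is itself a smooth subvariety (not just a scheme-theoretic fixed locus), and the action of $\phi$ restricts to the identity on $X^\phi$. Consequently $\phi$ acts trivially on $\Omega^\bt(X^\phi)$, the operator $T=1-(bB+Bb)=1$ agrees with the action of $\phi^{-1}$, and the mixed-complex relations reduce to $d^2=0$. Next I would check that the action of $\phi$ on $\cA=\cO(X)$ is good in the sense of Definition~\ref{eq:infinite.good-action}. This is exactly where Proposition~\ref{prop:twistedHKR} enters: it asserts that the twisted HKR map $\alpha^\phi:C^\phi(\cA)\to\Omega^\bt(X^\phi)$ is a parachain-complex quasi-isomorphism to a $\phi$-invariant mixed complex, so $C^\phi(\cA)$ is quasi-isomorphic to a $\phi$-invariant mixed complex, which is one of the equivalent characterizations of being a good $\phi$-parachain complex.

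With these two facts in hand, I would feed the data $(\sC,\alpha)=(\Omega(X^\phi),\alpha^\phi)$ into Theorem~\ref{thm:infinite.HCAGphi}. Part~(1) of that theorem then yields, almost verbatim, the chain of quasi-isomorphisms in~(\ref{eq:varieties.infinite-quasi-isom}): the leftmost map $\theta(1\otimes\alpha^\phi)$ combines the twisted HKR map with the horizontal comparison $\theta$ of Proposition~\ref{prop:infinite-order.quasi-isom-sC}, while the middle two are the bi-paracyclic Eilenberg-Zilber shuffle and Alexander-Whitney maps of Proposition~\ref{prop:bi-paracyclic-EZ}, and the rightmost is $\mu_\phi$ from Proposition~\ref{prop:quasi-isomorphism-CphiGA-CAGphi}. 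Composing these quasi-isomorphisms produces the asserted isomorphism $\HC_\bt(\cA_\Gamma)_{[\phi]}\simeq H_\bt(\Tot(C^\sigma(\OG_\phi,\Omega(X^\phi))))$. The statement about the periodicity operator being given by the cap product with the Euler class $e_\phi\in H^2(\OG_\phi,\C)$ of the central extension $1\to\brak\phi\to\Gamma_\phi\to\OG_\phi\to 1$ then follows immediately from part~(2) of Theorem~\ref{thm:infinite.HCAGphi}, since $\theta$ is precisely the comparison chain map built in Proposition~\ref{prop:infinite-order.quasi-isom-sC} to intertwine $S$ with the Euler-class cap product up to the explicit homotopy $h_\phi\otimes 1$.

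The only subtle point I expect is the verification that the action is good. Once cleanness forces $\phi$ to fix $X^\phi$ pointwise and $\Omega(X^\phi)$ to be a $\phi$-invariant mixed complex, the rest of the argument is a mechanical application of results already established; the heart of the proof lies in Proposition~\ref{prop:twistedHKR} and in the machinery of triangular $S$-modules developed in Section~\ref{sec:triangular-S-module}, both of which we are allowed to take as given.
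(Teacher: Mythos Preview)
Your proposal is correct and follows exactly the paper's approach: the paper states the theorem as a direct consequence of combining Theorem~\ref{thm:infinite.HCAGphi} with Proposition~\ref{prop:twistedHKR}, and your verification that $\Omega(X^\phi)$ is a $\phi$-invariant mixed complex and that the action is good via the twisted HKR quasi-isomorphism is precisely what is needed to invoke that theorem.
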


In this context, Corollary~\ref{cor:infinite.FT-spectral-sequence-good} gives the following statement. 
 
 \begin{corollary}
Let $\phi \in \Gamma$ have infinite order and act cleanly on $X$. Then the quasi-isomorphisms~(\ref{eq:varieties.infinite-quasi-isom}) and the filtration by columns of $\Tot_\bt(C(\OG_\phi)\otimes_{\OG_\phi} \sC)$ give rise to a spectral sequence,
\begin{equation*}
E^2_{p,q}= H_p(\OG_\phi, \Omega^q(X^\phi)) \Longrightarrow \HC_{p+q}(\cA_\Gamma)_{[\phi]}. 
\end{equation*}
 Moreover, the $E^2$-differential  is $(-1)^pd(u_\phi \frown -): H_p(\OG_\phi, \Omega^q(X^\phi)) \rightarrow H_{p-2}(\OG_\phi, \Omega^{q+1}(X^\phi))$. 
\end{corollary}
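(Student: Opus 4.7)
The plan is to deduce the statement directly from Theorem~\ref{thm:varieties.infinite-order} combined with the general spectral sequence machinery of Section~\ref{sec:triangular-S-module}, in the same manner as Corollary~\ref{cor:infinite.FT-spectral-sequence-good} is deduced from Theorem~\ref{thm:infinite.HCAGphi}.

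First, the cleanness assumption on $\phi$ together with Proposition~\ref{prop:twistedHKR} produces the parachain quasi-isomorphism $\alpha^\phi:C^\phi(\cA)\rightarrow \Omega(X^\phi)$, where the target is the $\phi$-invariant mixed complex $(\Omega^\bt(X^\phi),0,d)$. Theorem~\ref{thm:varieties.infinite-order} then gives the zigzag~(\ref{eq:varieties.infinite-quasi-isom}) and hence the identification of $\HC_\bt(\cA_\Gamma)_{[\phi]}$ with the homology of the total $S$-module $\Tot_\bt(C^\sigma(\OG_\phi,\Omega(X^\phi)))$, which by construction is the total $S$-module of the horizontal triangular $S$-module $C^\sigma(\OG_\phi)\otimes_{\OG_\phi}\Omega(X^\phi)$.

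Second, I would invoke Proposition~\ref{prop:triang-S-mod.horizontal-SC} applied to this triangular $S$-module. The filtration by columns gives a spectral sequence with $E^2_{p,q}=H_p^hH_q^v$. Here the vertical differential on $C_p(\OG_\phi)\otimes_{\OG_\phi}\Omega^\bt(X^\phi)$ is induced by the $b$-differential of $\Omega(X^\phi)$, which is zero; hence $H_q^v=C_\bt(\OG_\phi)\otimes_{\OG_\phi}\Omega^q(X^\phi)$ with horizontal differential $\partial$, and therefore $E^2_{p,q}=H_p(\OG_\phi,\Omega^q(X^\phi))$, exactly as claimed. This reproduces the formula of Corollary~\ref{cor:infinite.FT-spectral-sequence-good} in the case where $\sC$ has vanishing $b$-differential.

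Third, to identify the $E^2$-differential I would track the effect of the $S$-operator of the triangular $S$-module, which encodes how $\wbd$ fails to square to zero at the $E^1$-level. By Definition~\ref{def:horizontal-triangular-S-module} and the construction of $C^\sigma(\OG_\phi)$, we have $\wbd^2=-(bB+Bb)S$ on the triangular $S$-module, and here $(bB+Bb)=Bb$ since $b=0$, so the induced $E^2$-differential on $H_p(\OG_\phi,\Omega^q(X^\phi))$ is $(-1)^pB(u_\phi\frown-)$ with $B=d$. This gives the stated formula $(-1)^p d(u_\phi\frown-):H_p(\OG_\phi,\Omega^q(X^\phi))\rightarrow H_{p-2}(\OG_\phi,\Omega^{q+1}(X^\phi))$. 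The only non-routine point is bookkeeping the sign $(-1)^p$ coming from the total differential $d^\dagger=\partial+(-1)^p b+(-1)^p B(u_\phi\frown-)$ in~(\ref{eq:infinite.TotCsGsC}); once this is made explicit the argument is a direct specialization of Corollary~\ref{cor:infinite.FT-spectral-sequence-good} to $\sC=\Omega(X^\phi)$.
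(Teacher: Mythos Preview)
Your proposal is correct and follows exactly the paper's approach: the paper simply states that ``in this context, Corollary~\ref{cor:infinite.FT-spectral-sequence-good} gives the following statement,'' and you have unpacked what that specialization amounts to for $\sC=\Omega(X^\phi)=(\Omega^\bt(X^\phi),0,d)$.

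One small slip in your third paragraph: you write that $(bB+Bb)=Bb$ when $b=0$, but of course $b=0$ forces $bB+Bb=0$, so the triangular $S$-module relation $\wbd^2=-(bB+Bb)S$ is vacuous here and does not by itself identify the $E^2$-differential. The correct bookkeeping is the one you give at the end: in the total differential $d^\dagger=\partial+(-1)^p b+(-1)^p BS$ from~(\ref{eq:infinite.TotCsGsC}), the term $\partial$ drops column filtration by~$1$, the term $(-1)^p b$ vanishes, and the term $(-1)^p BS=(-1)^p d(u_\phi\frown-)$ drops filtration by~$2$; hence it is this last term that induces the $E^2$-differential. With that correction your sketch is complete.
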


\begin{remark}
 By using the results of Section~\ref{sec:LCA} the results of this sections can be extended to the crossed-products associated with the action on the $I$-adic completions of $\cA=\cO(X)$ considered in~\cite{BDN:AIM17}. The results are expressed in terms of the $I$-adic completions of the de Rham complex $\Omega(X^\phi)$ as defined in~\cite{BDN:AIM17}. 
\end{remark}


\begin{thebibliography}{00}
\bibitem{AK:Crelle03} R.\ Akbarpour, M.\  Khalkhali: \emph{Hopf algebra equivariant cyclic homology and cyclic homology of crossed product algebras}. 
J.\ Reine Angew.\ Math.\ \textbf{559} (2003), 137--152.

\bibitem{BC:CCDG} P.\ Baum, A.\ Connes,  \emph{Chern character for discrete groups}, \emph{A f\^ete of topology}, 
Academic Press, Boston, 1988, pp.~163-232.  

\bibitem{Ba:Preprint98} A.\ Bauval, \emph{Le Th\'eor\`eme d'Eilenberg-Zilber en homologie cyclique enti\`ere}, preprint, 1998, \texttt{arXiv:1611.08437}. 
 
\bibitem{BG:AENS94} J.\ Block, E.\ Getzler,  \emph{Equivariant cyclic homology and equivariant differential forms}, Ann.\ Sci.\
\'Ecole Norm.\ Sup.\ \textbf{27} (1994), 493--527.

\bibitem{BGJ:Crelle95} J.\ Block, E.\ Getzler, J.D.S.\ Jones,  \emph{The cyclic homology of crossed product algebras. II. Topological algebras},  J.\ Reine Angew.\ Math.\ \textbf{466} (1995), 19--25.

\bibitem{Bo:LNM78} R.\ Bott, \emph{On some formulas for the characteristic classes of group actions},  
%\emph{Differential topology, foliations and Gelfand-Fuks cohomology},  
Lecture Notes in Math., 652, Springer, Berlin, 1978, pp.~25--61.

\bibitem{BDN:AIM17} J.\ Brodzki, S.\ Dave, V.\ Nistor,  \emph{The periodic cyclic homology of crossed products of 
finite type algebras}, Adv.\ Math.\ \textbf{306} (2017), 494--523. 

\bibitem{Br:TwistedEZ} R.\ Brown, \emph{The twisted Eilenberg-Zilber theorem}, Simposio di Topologia (Messina, 1964), Edizioni Oderisi, Gubbio, 1965, pp. 33--37.   

\bibitem{Br:AIF87} J.-L.\ Brylinski, \emph{Cyclic homology and equivariant theories}, Ann.\ Inst.\ Fourier (Grenoble)
\textbf{37} (1987), 15--28.

\bibitem{Br:Preprint87} J.-L.\ Brylinski, \emph{Algebras associated with group actions and their homology}, Brown University preprint, 1987.

\bibitem{BN:KT94} J.-L.\ Brylinski, V.\ Nistor, \emph{Cyclic cohomology of \'etale groupoids}, 
$K$-Theory \textbf{8} (1994), 341--365. 

\bibitem{Bu:CMH85} D.\ Burghelea,  \emph{The cyclic homology of the group rings}, Comment.\ Math.\ Helv.\ 
\textbf{60} (1985), 354--365. 

\bibitem{Bu:CM86} D.\ Burghelea, \emph{Cyclic homology and the algebraic K-theory of $k$-modules. I}, 
%\emph{Applications of algebraic K-theory to algebraic geometry and number theory, Part I, II} (Boulder, Colo., 1983), 
 Contemp.\ Math., 55, Amer.\ Math.\ Soc.\, Providence, RI, 1986, pp.~89--115. 

\bibitem{Co:Ober81} A. Connes,  \emph{Spectral sequence and homology
of currents for operator algebras}, Math.\ Forschungsinstitut Oberwolfach Tagungsbericht 42/81, 
Funktionalanalysis und C*-Algebren, 27.9.--3.10, 1981.

\bibitem{Co:CRAS83} A.\ Connes,  \emph{Cohomologie cyclique et foncteur $\op{Ext}^n$}, C.\ R.\ Acad.\ Sci.\ Paris Ser.\ I Math.\ \textbf{296} (1983),
953--958.

 \bibitem{Co:NCDG} A.\ Connes, \emph{Noncommutative differential geometry}, Inst. Hautes \'Etudes Sci.\ Publ.\ Math.\ 
 \textbf{62} (1985), 257--360.
  
\bibitem{Co:Kyoto83} A.\ Connes,  \emph{Cyclic cohomology and the transverse fundamental class
of a foliation}, % in \emph{Geometric methods in operator algebras} (Kyoto, 1983), 
 Pitman Res.\ Notes in Math., 123, Longman, Harlow, 1986, pp.~52--144
 
 \bibitem{Co:NCG} A.\ Connes,  \emph{Noncommutative geometry}, Academic Press,  San Diego, 1994.

\bibitem{CM:CMP98} A.\ Connes, H.\ Moscovici, \emph{Hopf algebras, cyclic cohomology and the transverse
index theorem}, Commun.\ Math.\ Phys.\ \textbf{198} (1998), 199--246.

\bibitem{Cr:KT99} M.\ Crainic, \emph{Cyclic cohomology of \'etale Groupoids: the general case}, $K$-Theory 
\textbf{17} (1999), 319--362.

\bibitem{Da:JNCG13} S.\ Dave, \emph{An equivariant noncommutative residue}, J.\ Noncommut.\ Geom.\ \textbf{7} (2013) 709--735.

\bibitem{ENN:ActaM88} G.\ Elliott, T.\ Natsume, R.\ Nest, \emph{Cyclic cohomology for one-parameter smooth crossed products}, Acta Math.\ \textbf{160} (1988) 285--305.

\bibitem{Em:InvM98} I.\ Emmanouil, \emph{On a class of groups satisfying BassÕ conjecture}, Invent.\ Math.\ \textbf{132} (1998) 307--330.

\bibitem{FT:LNM87} B.\ Feigin, B.\ Tsygan,  \emph{Cyclic homology of algebras with quadratic relations, 
universal envelopping algebras and group algebras}, Lecture Notes in Math., 1289, Springer, 1987, pp.~210--239. 

\bibitem{Ge:AIM94} E.\ Getzler, \emph{The equivariant Chern character for noncompact Lie groups},  Adv.\ Math.\ \textbf{109} (1994), 88--107.
 
\bibitem{GJ:Crelle93} E.\ Getzler, J.D.S.\  Jones,  \emph{The cyclic cohomology of crossed product algebras}, J.\ Reine 
Angew.\ Math.\ \textbf{445} (1993), 161--174. 

\bibitem{Go:CMP99} A.\ Gorokhovsky, \emph{Characters of cycles, equivariant characteristic classes 
and Fredholm modules}, Commun.\ Math.\ Phys.\ \textbf{208} (1999), 1--23. 

\bibitem{HK:KT05} T.\ Hadfield, U.\ Kr\"ahmer, \emph{Twisted homology of quantum $SL(2)$}, $K$-Theory \textbf{34} (2005), 327--360.

\bibitem{HK:JKT09} T.\ Hadfield, U.\ Kr\"ahmer, \emph{Braided homology for quantum groups}, J.\ $K$-Theory \textbf{4} (2009), 299--332.

\bibitem{HKR:TAMS62} G.\ Hochschild, B.\ Kostant, A.\ Rosenberg, \emph{Differential forms on regular affine algebras},
Trans.\ Amer.\ Math.\ Soc.\ \textbf{102} (1962), 383--408.

\bibitem{HJ:KT87} C.E.\ Hood, J.D.S.\ Jones, \emph{Some algebraic properties of cyclic homology groups}, $K$-theory \textbf{1} (1987), 361--384.

\bibitem{Ji:KT95} R.\ Ji, \emph{Nilpotency of Connes' periodicity operator and the idempotent conjectures}, 
$K$-Theory \textbf{9} (1995), 59--76. 

\bibitem{JK:KT89} J.D.\ Jones, C.\ Kassel, \emph{Bivariant cyclic theory}, $K$-Theory \textbf{3} (1989) 339--365.

\bibitem{Ka:JAlg87} C.\ Kassel, \emph{Cyclic homology, comodules and mixed complexes}, J.\ Algebra \textbf{107} (1987), 195--216.

\bibitem{Ka:Crelle90} C.\ Kassel, \emph{Homologie cyclique, caract\`ere de Chern et lemme de perturbation}, J.\ reine angew.\ Math.\ \textbf{408} (1990), 159--180. 

\bibitem{KR:CMB04} M.\ Khalkhali, B.\ Rangipour, \emph{On the generalized cyclic Eilenberg-Zilber Theorem}, Canad.\ 
Math.\ Bull.\ Vol.\ \textbf{47} (1), (2004) 38--48.

\bibitem{Lo:CH} J.-L.\ Loday, \emph{Cyclic homology}, Springer, Berlin, 1992.

\bibitem{LQ:CMH84} J.-L.\ Loday, D.\ Quillen,  \emph{Cyclic homology and the Lie algebra homology of matrices}, 
Comment.\ Math.\ Helv.\ \textbf{59} (1984), 565--591. 

\bibitem{Ma:BCP86} Z.\ Marciniak,  \emph{Cyclic homology of group rings}. Banach Center Publ., 18, PWN, Warsaw, 1986, pp.~210--239. 

\bibitem{Mo:AIM15} H.\ Moscovici,  \emph{Geometric construction of Hopf cyclic characteristic classes}, 
Adv.\ Math.\ \textbf{274} (2015), 651--680.

\bibitem{MR:AIM11} H.\ Moscovici, B.\ Rangipour, \emph{Hopf cyclic cohomology and transverse characteristic
classes}, Adv.\ Math.\ \textbf{227} (2011), 654--729. 

\bibitem{Ni:InvM90} V.\ Nistor, \emph{Group cohomology and the cyclic cohomology of crossed products}, Invent.\  
Math.\ \textbf{99} (1990), 411--424.

\bibitem{Ni:InvM93} V.\ Nistor, \emph{Cyclic cohomology of crossed products by algebraic groups}, Invent.\ Math.\ \textbf{112} (1993), 615--638.

\bibitem{Ne:JFA88} R.\ Nest, \emph{Cyclic cohomology of crossed products with $\mathbb{Z}$}, J.\ Funct.\ Anal.\ \textbf{80} (1988), 235--283.

\bibitem{NPPT:Crelle06} N.\ Neumaier, M.J.\ Pflaum, H.B.\ Posthuma, X.\ Tang, \emph{Homology of formal deformations of proper \`etale Lie groupoids}, J.\ Reine Angew.\ Math.\ \textbf{593} (2006) 117Ð168.

\bibitem{PPTT:CCM11} M.J.\ Pflaum, H.B.\ Posthuma, X.\ Tang, H.-H.\ Tseng, \emph{Orbifold cup products and ring structures on Hochschild cohomologies}, Commun.\ Contemp.\ Math.\ \textbf{13} (2011) 123--182.

\bibitem{Po:CRAS4} R.\ Ponge, \emph{The cyclic homology of crossed-product algebras, I}, C.\ R.\ Acad.\ Sci.\ Paris, s\'er.\ I, \textbf{355} (2017), 618--622. 

\bibitem{Po:CRAS5} R.\ Ponge, \emph{The cyclic homology of crossed-product algebras, II}, C.\ R.\ Acad.\ Sci.\ Paris, s\'er.\ I, \textbf{355} (2017), 623--627. 

\bibitem{Po:para-S-modules} R.\ Ponge, \emph{Perturbation lemma, para-$S$-modules, and cup product for paracyclic modules}, in preparation. 

 \bibitem{PW:JNCG16}  R.\ Ponge, H.~Wang: \emph{Noncommutative geometry and conformal geometry. II. 
 Connes-Chern character and the local equivariant index theorem.} J.\ Noncomm.\ Geom.\ \textbf{10} (2016), 307--378.

\bibitem{Ts:UMN83} B.\ Tsygan, \emph{Homology of matrix Lie algebras over rings and Hochschild homology}, Uspekhi Math.\  Nawk.\ 
\textbf{38} (1983), 217--218.

\bibitem{ZW:CMB14} J.\ Zhang, Q.-W.\ Wang,  \emph{An explicit formula for the generalised cyclic shuffle map}, Canad.\ Math.\ Bull.\ Vol. \textbf{57} (1), (2014) 210--223.
\end{thebibliography}
\end{document}